\renewcommand{\leq}{\leqslant}
\renewcommand{\geq}{\geqslant}
\theoremstyle{plain}
\newtheorem{theorem}{Theorem}
\newtheorem{corollary}[theorem]{Corollary}
\newtheorem{proposition}[theorem]{Proposition}
\newtheorem{lemma}[theorem]{Lemma}
\theoremstyle{definition}
\newtheorem{definition}[theorem]{Definition}
\newtheorem{remark}[theorem]{Remark}
\newtheorem*{thm}{Theorem}
\begin{document}

\title{\bf The skeleton of the UIPT, seen from infinity}
\author{\textsc{Nicolas Curien}\footnote{Universit\'e Paris-Saclay and Institut Universitaire de France. E-mail: \href{mailto:nicolas.curien@gmail.com}{nicolas.curien@gmail.com}.} \, and \textsc{Laurent M\'enard}\footnote{Universit\'e Paris Nanterre. Email: \href{mailto:laurent.menard@normalesup.org}{laurent.menard@normalesup.org}}}
\date{\today}
\maketitle


\begin{abstract}
We prove that geodesic rays in the Uniform Infinite Planar Triangulation (UIPT) coalesce in a strong sense using the skeleton decomposition of random triangulations discovered by Krikun. This implies the existence of a unique horofunction measuring distances from infinity in the UIPT. We then use this horofunction to define the skeleton ``seen from infinity'' of the UIPT and relate it to a simple Galton--Watson tree conditioned to survive, giving a new and particularly simple construction of the UIPT. Scaling limits of perimeters and volumes of horohulls within this new decomposition are also derived, as well as a new proof of the $2$-point function formula for random triangulations in the scaling limit due to Ambj\o rn and Watabiki.
\end{abstract}

\bigskip




\begin{figure}[!h]
 \begin{center}
 \includegraphics[width=0.9\linewidth]{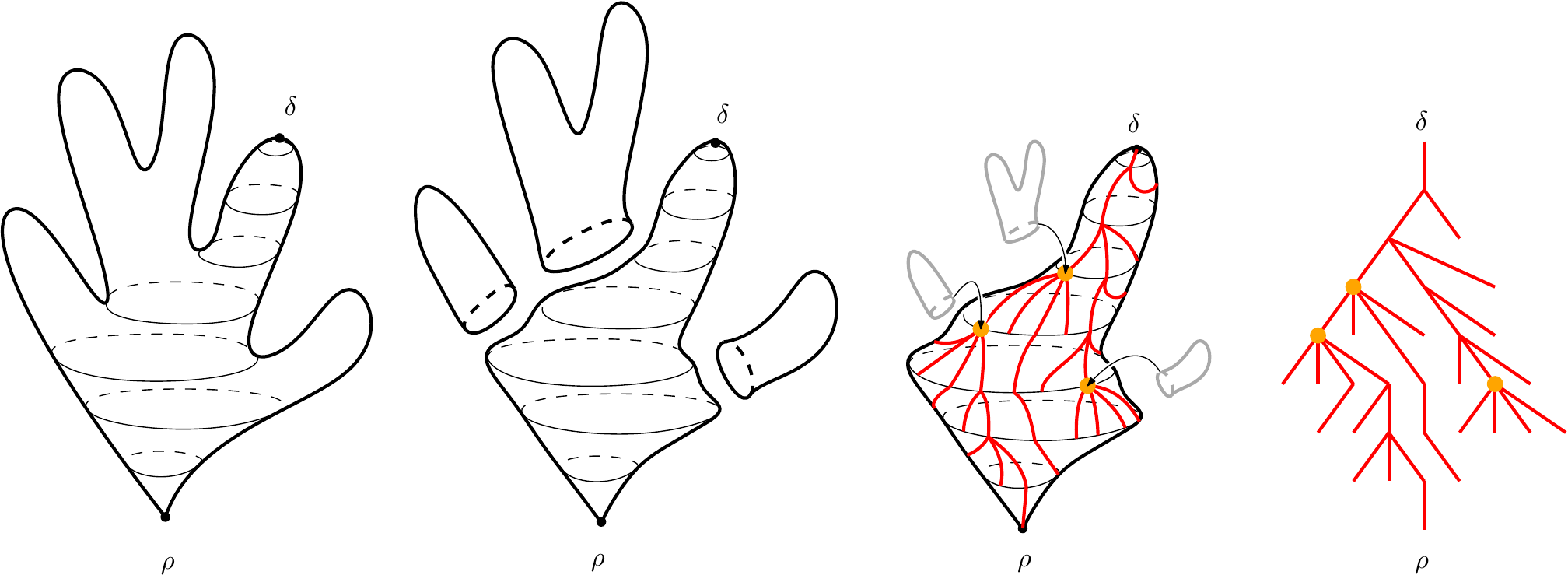}
 \caption{ \label{fig:nutshell}A rough view of the skeleton decomposition of a triangulation with origin $\rho$ and target $\delta$: From left to right we see a triangulation with two distinguished points and the cycles at heights separating $\rho$ from $\delta$. The triangulation is represented as a ``cactus'' where heights measure distances from the origin vertex $\rho$. On the second figure we have cut out the parts of the triangulation which do not belong to these cycles. These parts are separately encoded and attached to vertices of the red tree which describes the genealogy of the cycles. The last figure is the tree alone.}
 \end{center}
 \end{figure}

\section{Introduction} 
Since its introduction by Angel \& Schramm \cite{AS}, the Uniform Infinite Planar Triangulation (UIPT) is a central character in the theory of random planar maps and despite intensive research, some of its basics properties remain mysterious. The skeleton decomposition of random triangulations was introduced by Krikun in \cite{Kr} for the UIPT and later extended to the quadrangular case \cites{Kr,LGL17}. It has recently been proved very useful in order to study the geometry of random triangulations  \cites{CLGfpp,AR,LGL17,Budzinski18,M}. The present paper is another illustration that the skeleton representation can serve as a convenient substitute of the classical Schaeffer-type constructions to study the geometry of random triangulations.

\paragraph{Skeleton decomposition in a nutshell.} Roughly speaking the skeleton decomposition can be described as follows: Consider a planar triangulation with two distinguished vertices, an origin $\rho$ and a target $\delta$. In the case of an infinite one-ended triangulation, the target $\delta$ can be considered as located at infinity. We can then ``slice'' the triangulation according to distances to the origin, called heights, and consider the cycles of vertices surrounding $\delta$. The skeleton decomposition is a way to associate to these nested cycles a tree structure ``coming down from $\delta$'' encoding these cycles. The parts of the triangulation which are cut out during this operation (so-called baby universes in physics) form triangulations with a simple boundary  which are attached to the vertices of the tree. Obviously this description is heuristic  and the mathematically precise (but longer) construction is presented in Section \ref{sec:skel}.

Krikun developed this decomposition in the case of triangulations without self-loops  in \cite{Kr} but it turns out that the decomposition is even simpler in the case of general triangulations as shown in \cite{CLGfpp}. In the case of the UIPT the distribution of the resulting tree $ \mathsf{Krik}$ is simple and is related to that of a Galton--Watson tree whose  offspring distribution $\theta$ has generating function given by
 \begin{eqnarray} \varphi(z)= \sum_{i=0}^\infty z^i \theta(i) = 1 - \left( 1+ \frac{1}{ \sqrt{1-z}}\right)^{-2}, \qquad z \in [0,1] \label{eq:offspring},  \end{eqnarray}
see \cites{CLGfpp,M}. In particular, this offspring distribution is critical and in the domain of attraction of the totally asymmetric $3/2$-stable distribution. The triangulations attached to the vertices of the tree $ \mathsf{Krik}$ to complete the description of the UIPT are critical Boltzmann triangulations. These Boltzmann triangulations can be viewed as simple building blocks for random triangulations and are often met in several flavors. They usually have a global constraint -- \emph{e.g.} here being a triangulation with a simple boundary -- and the probability of a given triangulation $\mathfrak t$ satisfying the constraint is proportional to $x_c^{|\mathfrak t|}$, where $\mathfrak t$ denotes the number of vertices of $\mathfrak t$ and $x_c$ is the radius of convergence of the generating series of triangulations counted by vertices (see Section \ref{sec:combi}).

The similarities between this skeleton decomposition approach \emph{\`a la} Krikun and the slice decomposition developed by Guitter \cite{G17} (see also \cite[Appendix A]{BG12}) are worth mentioning. Indeed, the slices of Guitter can roughly be interpreted at the triangulations coded by a single tree in the skeleton decomposition and in both cases the fact that we can obtain exact discrete formulas stem from the ``miracle'' that a certain iterate can be explicitly computed see our \eqref{eq:iterPhit} and \cite[Eq (3)]{G17}. Establishing an exact correspondence between the two approaches could be fruitful.

\medskip

The goal of the present work is to use the skeleton decomposition of the UIPT to extend the results of \cite{CMM10} obtained in the case of the Uniform Infinite Planar Quadrangulation (UIPQ) concerning distances from infinity and coalescence of geodesic rays. En route we develop another skeleton representation, where the distances are now measured from the target vertex $\delta$ which become distances from infinity in the case of the UIPT. This new combinatorial tool enables to perform interesting calculations such as a quick derivation of the two-point function of Ambj\o rn and Watabiki \cite{ADJ}  as well as computing  scaling limits of the horohull process which was only partially studied in \cite{CMM10}.

\paragraph{Results.} The phenomenon of \emph{geodesic coalescence} is by now standard in the theory of random planar maps. This was first proved by Le Gall \cite{LG09} in the case of the Brownian sphere (see also \cite{Bet16} for the case of other surfaces) and also observed at a discrete level in the case of the UIPQ in \cite{CMM10}. In this work, we will focus on the UIPT (type I: loops and multiple edges are allowed) which we denote by $ \mathbf{T}_{\infty}$ with origin vertex $\rho$. We prove a similar phenomenon, namely the fact that all infinite discrete geodesics starting within a bounded distance from the origin of the map must pass through an infinite sequence of ``cut-vertices'', see Section \ref{sec:coalescence} and in particular Theorem \ref{thm:scale}. This enables us to extend the main result of \cite{CMM10} to the UIPT:

\begin{theorem}[Distances from infinity] \label{thm:CMM} Almost surely, for any vertex $u \in \mathsf{V}(\mathbf \mathbf{T}_{\infty})$ the following limit exists
$$ \ell(u) = \lim_{z \to \infty} \left( \mathrm{d_{gr}}(u, z) - \mathrm{d_{gr}}(\rho,z)\right),$$
where
 $z \to \infty$ means that $z$ eventually leaves any finite set of vertices in the UIPT.
\end{theorem}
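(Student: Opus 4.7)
The plan is to derive Theorem \ref{thm:CMM} as a short, soft consequence of the geodesic coalescence statement Theorem \ref{thm:scale} promised in Section \ref{sec:coalescence}, mimicking the strategy of \cite{CMM10} in the UIPQ setting. The key input is that, almost surely, there exists an infinite sequence $(c_n)_{n \geq 1}$ of cut-vertices in $\mathbf{T}_\infty$ which tends to infinity and is such that, for any fixed radius $r$, removing $c_n$ from $\mathbf{T}_\infty$ disconnects the ball $B_r(\rho)$ from the unique end, provided $n$ is large enough.

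First I would fix $u \in \mathsf{V}(\mathbf{T}_\infty)$ and choose $r$ so that $u \in B_r(\rho)$. By Theorem \ref{thm:scale}, for all $n$ sufficiently large, $c_n$ separates both $\rho$ and $u$ from the infinite component of $\mathbf{T}_\infty \setminus \{c_n\}$. For any vertex $z$ lying in that infinite component, every path from $\rho$ (respectively from $u$) to $z$ must visit $c_n$, so that
\begin{equation*}
\mathrm{d_{gr}}(\rho, z) = \mathrm{d_{gr}}(\rho, c_n) + \mathrm{d_{gr}}(c_n, z), \qquad \mathrm{d_{gr}}(u, z) = \mathrm{d_{gr}}(u, c_n) + \mathrm{d_{gr}}(c_n, z).
\end{equation*}
Subtracting these two identities gives $\mathrm{d_{gr}}(u,z) - \mathrm{d_{gr}}(\rho,z) = \mathrm{d_{gr}}(u,c_n) - \mathrm{d_{gr}}(\rho,c_n) =: a_n$, a quantity that no longer depends on $z$. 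Since $\mathbf{T}_\infty$ is one-ended and the $c_n$ tend to infinity, the finite sides of the successive cut-vertices are nested (any vertex beyond $c_{n+1}$ is also beyond $c_n$), so the sequence $(a_n)$ is eventually constant; call its limit $\ell(u)$. As $z$ escapes every finite subset of vertices, it eventually lies beyond $c_n$ for every fixed $n$, and hence $\mathrm{d_{gr}}(u,z) - \mathrm{d_{gr}}(\rho,z) = \ell(u)$ eventually, which is the desired conclusion.

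The main obstacle of the full argument is therefore not this reduction, but establishing Theorem \ref{thm:scale}. Its proof should exploit the skeleton decomposition of \cite{CLGfpp} to show that the cycles surrounding infinity in $\mathbf{T}_\infty$ shrink to length one infinitely often almost surely, producing the cut-vertices $c_n$; this will rest on the explicit offspring law $\theta$ in \eqref{eq:offspring} of the Krikun tree $\mathsf{Krik}$ and a Borel--Cantelli-type input bounding, at each height, the probability that the corresponding cycle has perimeter one. One finally checks that any discrete geodesic ray starting in $B_r(\rho)$ does pass through these cut-vertices, since removing them disconnects the starting point from infinity.
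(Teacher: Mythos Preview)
Your reduction hinges on a claim that Theorem \ref{thm:scale} does \emph{not} deliver. Geodesic coalescence at scale $r$, as defined in Definition \ref{def:coalescence}, only says that every geodesic from $\partial \overline{B}_{2r}$ \emph{to the root $\rho$} passes through a common vertex $v_r$. It does \emph{not} say that $v_r$ is a cut-vertex of the graph, i.e.\ that removing $v_r$ disconnects $B_r(\rho)$ from infinity. Hence your key identity $\mathrm{d_{gr}}(u,z)=\mathrm{d_{gr}}(u,c_n)+\mathrm{d_{gr}}(c_n,z)$ is unjustified for a general $u$: a geodesic from $z$ to $u$ is not a geodesic to $\rho$, so nothing forces it through $v_r$. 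This is exactly the difficulty the paper's proof addresses: it first reduces (by re-rooting invariance) to $u$ a neighbor of $\rho$, then observes that a geodesic $z\to u$ can fail to be strictly decreasing towards $\rho$ at most once, and therefore must behave as a geodesic to $\rho$ on at least one of two nested coalescence annuli $\overline{B}_{2R_1}\setminus\overline{B}_{R_1}$ and $\overline{B}_{2R_2}\setminus\overline{B}_{R_2}$. Two scales, not one, are needed precisely because $v_r$ is not a cut-vertex.

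Your proposed route to manufacture genuine cut-vertices---showing that $|\partial\overline{B}_r|=1$ infinitely often via Borel--Cantelli on the offspring law $\theta$---is not viable. The hull perimeters in the UIPT grow like $r^{2}$ (see the scaling considerations around Lemma \ref{lem:comparison} and \cite[Remark 3]{CLGfpp}); a first Borel--Cantelli computation shows $\sum_r \mathbb{P}(|\partial\overline{B}_r|=1)<\infty$, so these events occur only finitely often a.s. The paper's mechanism is quite different: it designs a geometric event $\mathcal{G}_r$ inside the skeleton (a large slot plus a controlled meeting of left/right-most geodesics, see Figure \ref{fig:geoevent}) that forces geodesic coalescence without ever producing a cut-vertex, and then uses a Jeulin-type argument to get infinitely many good scales. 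You should rework the reduction along those lines.
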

In words, the function $\ell$, called the horofunction, measures the relative distances to $\infty$ of vertices compared to the origin (horodistances). This shows that the geodesic boundary -- or Gromov boundary -- of the UIPT consists of a single point as in the case of the UIPQ (see \cite[Section 2.5.1]{CMM10}). As mentioned above, the geodesic coalescence properties and the above result  was proved  \cite{CMM10}  in the case of the UIPQ using two different Schaeffer-type constructions. In this work we rather use the skeleton decomposition to give a straightforward and more geometric proof of these facts (Section \ref{sec:coalescence}). 
\bigskip

We then use the horofunction $\ell$ to construct another skeleton decomposition of the UIPT by measuring distances from infinity rather than from the origin as in \cite{Kr}. More precisely,  for $r \geq 1$ we consider the horohull $ \overline{H}_{r}( \mathbf{T}_{\infty})$ as the set of all faces which we can reach from the origin $\rho$ by a path that stays at horodistance more than $-r+1$ from infinity. The boundaries $\partial \overline{H}_{r}( \mathbf{T}_{\infty})$ then slice the UIPT into layers and we can perform a similar skeleton decomposition, see Figure \ref{fig:skelinfinity} and Section \ref{sec:skelinfinity} for details. This gives rise to a tree $\mathsf{Skel}$ now climbing upwards from the origin in the UIPT (compared to $ \mathsf{Krik}$ which was growing downards from $\infty$). As before the vertices of the tree carry random triangulations with a boundary. Recall from \eqref{eq:offspring} the definition of $\theta$ and let $\nu$ be the offspring distribution whose generating function is given by 
 \begin{eqnarray} \label{def:nu}
 \psi(z) := \sum_{i=0}^{\infty} \nu(i) z^{i} = 1- 2 \frac{1-z}{(1+ 2 \sqrt{1-z})(1+ \sqrt{1-z})}, \qquad z \in [0,1].  \end{eqnarray}

\begin{figure}[!h]
 \begin{center}
 \includegraphics[width=12cm]{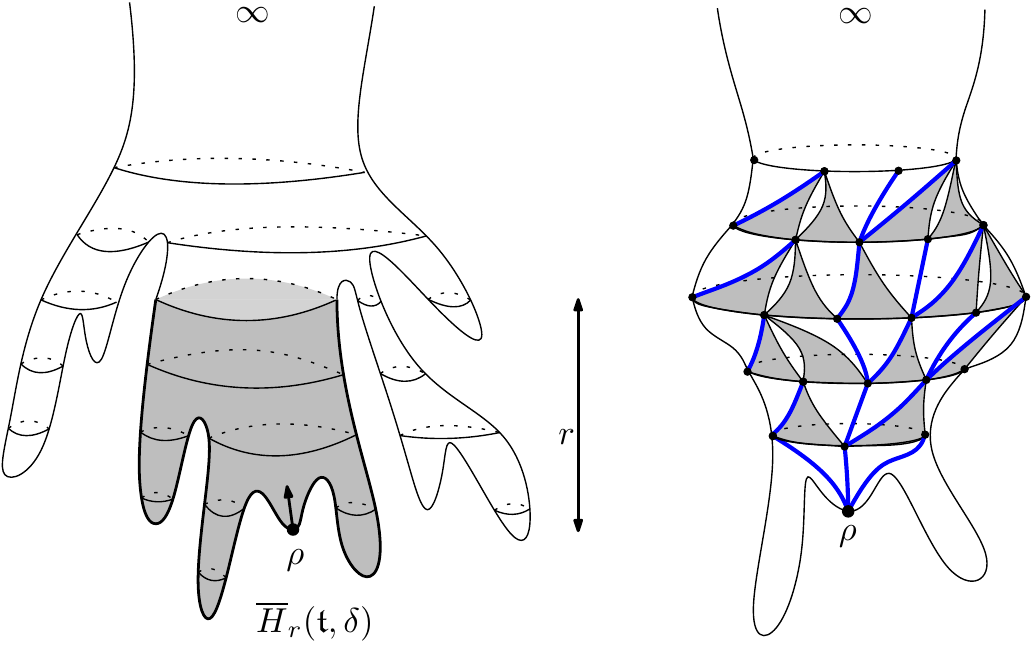}
 \caption{ \label{fig:skelinfinity} Left: The layering of the UIPT is done using the function $\ell$ measuring distances from infinity. Right: The skeleton decomposition of the slices separating the origin and infinity. Notice that contrary to the initial skeleton decomposition of Krikun \cite{Kr}, the blue tree $\mathsf{Skel}$ coding the skeleton now grows upwards in $ \mathbf{T}_{\infty}$.}
 \end{center}
 \end{figure}
We state here a rather unprecise version of our forthcoming Theorem \ref{thm:skeletoninfinity}
\begin{thm}[Skeleton decomposition from infinity] The infinite tree $ \mathsf{Skel}$ has the law of a (modified) Galton--Watson tree with offspring distribution $\theta$ and where the offspring distribution of the ancestor is $\nu$, conditioned to survive. Conditionally  on $ \mathsf{Skel}$ the triangulations attached to the vertices of the tree are independent and
\begin{itemize}
\item  vertices $v$ different from the ancestor carry critical Boltzmann triangulations of the $(c(v)+2)$-gon where $c(v)$ is the number of children of $v$ in $ \mathsf{Skel}$;
\item the origin carries a special random triangulation with a distinguished face of degree $c({\rho})$ called the critical Boltzmann $\delta-$cap of perimeter $c(\rho)$ whose law is described in Section \ref{sec:caps}.
\end{itemize}
\end{thm}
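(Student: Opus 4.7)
The plan is to derive the skeleton-from-infinity decomposition by inverting the classical Krikun decomposition of $\mathbf{T}_\infty$ with the help of the horofunction $\ell$ from Theorem \ref{thm:CMM} and the geodesic coalescence of Theorem \ref{thm:scale}. The key structural input is that the nested cycles $\partial \overline{H}_r$ separating $\rho$ from infinity are almost surely a subfamily of the cycles $\partial B_h^\bullet$ appearing in $\mathsf{Krik}$, so that $\mathsf{Skel}$ can be read off from $\mathsf{Krik}$ after a suitable reindexing.

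First I would work at finite $n$. For each $n \geq 1$, the restriction of Krikun's skeleton to the hull $B_n^\bullet(\mathbf{T}_\infty)$ gives a finite tree $\mathsf{Krik}^{(n)}$ rooted at the outer boundary, with offspring distribution $\theta$ in the bulk and critical Boltzmann triangulations of simple-boundary polygons attached to its vertices. Using the finite-$n$ analogue of $\ell$ -- namely the distance to $\partial B_n^\bullet$ -- I would re-read the same nested cycles from the outside in and encode them in a new finite tree $\mathsf{Skel}^{(n)}$ rooted near $\rho$. By construction this reindexing preserves the attached triangulations, so the offspring distribution in the bulk of $\mathsf{Skel}^{(n)}$ is still $\theta$ and each non-root vertex $v$ carries an independent critical Boltzmann triangulation of the $(c(v)+2)$-gon.

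Next I would pass to the limit $n \to \infty$. Theorem \ref{thm:scale} ensures that each horocycle $\partial \overline{H}_r$ coincides with a Krikun cycle $\partial B_{h_n(r)}^\bullet$ for all $n$ large enough, with random $h_n(r) \to \infty$, and by local convergence the triangulations attached in $\mathsf{Skel}^{(n)}$ converge to those of $\mathsf{Skel}$. This transfers the offspring distribution $\theta$ and the Boltzmann identification to every non-ancestor vertex of $\mathsf{Skel}$, while the infiniteness of $\mathbf{T}_\infty$ forces $\mathsf{Skel}$ to survive. The conditional independence of the attached triangulations given $\mathsf{Skel}$ is inherited from the same property in $\mathsf{Krik}$ together with the strong spatial Markov property of the UIPT.

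The main obstacle, and where I expect the real combinatorial work to lie, is identifying the law at the root: the ancestor offspring distribution $\nu$ from \eqref{def:nu} and the critical Boltzmann $\delta$-cap. In $\mathsf{Krik}^{(n)}$ the ``innermost'' cycle immediately surrounding $\rho$ is a random Krikun cycle of random perimeter lying at some very deep level, and the joint law of this perimeter together with the triangulation it bounds must be computed explicitly. I would do this by summing over arbitrarily long chains of cycles of $\mathsf{Krik}$ lying between $\rho$ and $\partial \overline{H}_1$ and having no branching that separates $\rho$ from infinity, each weighted by the appropriate Boltzmann factors and the offspring probabilities derived from $\varphi$ of \eqref{eq:offspring}. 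The explicit form of $\psi$ in \eqref{def:nu} should then emerge as a closed-form resummation, in the same spirit as the ``miraculous'' iterate \eqref{eq:iterPhit} exploited elsewhere in the paper, and the $\delta$-cap is identified as the union of the Boltzmann triangulations attached to the cycles along this chain.
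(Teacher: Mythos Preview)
Your proposal rests on a structural claim that is false: the horocycles $\partial \overline{H}_r$ are \emph{not} a subfamily of Krikun's cycles $\partial \overline{B}_h$, and $\mathsf{Skel}$ cannot be read off from $\mathsf{Krik}$ by reindexing. The cycle $\partial \overline{B}_h$ consists of vertices at distance $h$ from $\rho$, whereas $\partial \overline{H}_r$ is a cycle of vertices with horodistance $\ell(v)=-r$. A vertex $v\in\partial \overline{B}_h$ always has $\ell(v)\geq -h$, but equality holds only when $v$ lies on a geodesic ray from $\rho$ to infinity; generic boundary vertices do not. Conversely, a vertex with $\ell(v)=-r$ can lie at graph distance much larger than $r$ from $\rho$. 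Theorem~\ref{thm:scale} controls \emph{geodesics}, not level cycles, and does not force the two slicings to agree. Even if the cycles did coincide, the genealogy in $\mathsf{Krik}$ runs from infinity down to $\rho$ while in $\mathsf{Skel}$ it runs from $\rho$ up to infinity; the down-triangles defining the slots point in opposite directions, so the attached triangulations would not be preserved either. Hence neither your transfer of the bulk law $\theta$ nor your identification of the $\delta$-cap by summing along chains of $\mathsf{Krik}$-cycles can be carried out.

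The paper proceeds quite differently. It works with finite uniform pointed triangulations $(\mathbf{T}_n,\delta_n)$ and performs the $\delta$-skeleton decomposition \emph{directly} (Section~\ref{sec:skelretourne}), never attempting to relate it to $\mathsf{Krik}$. The explicit law of $[\mathsf{Skel}]_r$ is obtained by a generating-function computation: one writes $\mathbb{P}(\overline{H}_{r+1}(\mathbf{T}_n,\delta_n)=\Delta_1\cup\Delta)$ as a coefficient extraction involving the cone series $C_{k,0,p}$ of~\eqref{eq:gencone}, performs a singularity analysis as $n\to\infty$, and recognises the factors $\nu(c(\rho))/2$, $\prod\theta(c(v))$, and the Boltzmann weights (see~\eqref{eq:interessant} and the display following it). The role of geodesic coalescence is only to guarantee that the horofunction $\ell$ exists and that $\ell_n\to\ell$ locally, so that the local limit of $\mathsf{Skel}_n$ is well-defined; it is not used to identify cycles.
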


To prove the above theorem we first work in the setting of finite triangulations. As said above, Krikun's original skeleton decomposition is performed by measuring distances from the origin $\rho$ and then encoding the (descending) genealogy of cycles of vertices at the same height separating $\delta$ and $\rho$. We call this procedure the $\rho$-skeleton decomposition. Clearly, one can exchange the roles of the two vertices $\delta$ and $\rho$ and consider the $\delta$-skeleton decomposition. Actually the roles of $\rho$ and $\delta$ are not exactly symmetric in our context since we will work with rooted and pointed triangulations $( \mathfrak{t}, \vec{e}, \delta)$ where $\vec{e}$ is a distinguished oriented edge whose origin vertex is $\rho$ and $\delta$ is another distinguished vertex. The combinatorial description of the $\delta$-skeleton decomposition turns out to be particularly simple and enables us to perform a bunch of new computations using the results of \cite{M}. 
For example, we are able to compute in Section \ref{sec:2point} the joint distribution of the size of the map and distance between the origin and $\delta$  in a random Boltzmann pointed triangulation (that is a pointed triangulation whose weight is proportional to $x_c^{|T|}$ as mentioned above). More precisely, for any $h \geq 1$ we write $T_{h}^{\bullet}$ for a random critical Boltzmann pointed triangulation conditioned on the event where the distinguished point $\delta$ is at distance exactly $h$ from the origin of the map.
\begin{proposition}[Two point function of 2D gravity]
\label{prop:2point}
If $|T^{\bullet}_{h}|$ denotes the volume (i.e.~number of vertices) of $T^{\bullet}_{h}$ then we have
$$ \lim_{h\to \infty} \mathbb{E}\left[ e^{-\lambda h^{-4} |T^{\bullet}_{h}|} \right] =(6\lambda) ^{3/4} \frac{\cosh((6\lambda)^{1/4})}{\sinh^{3}( (6\lambda) ^{1/4})}.$$
\end{proposition}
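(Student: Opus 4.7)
The plan is to apply the $\delta$-skeleton decomposition developed earlier in the paper directly to a critical Boltzmann pointed triangulation conditioned on the event $\{\mathrm{d_{gr}}(\rho,\delta)=h\}$. This decomposition represents $T_h^\bullet$ as a tree of height exactly $h$ (encoding the genealogy of the cycles separating $\delta$ from $\rho$) together with a collection of critical Boltzmann triangulations with boundary attached to the vertices of the tree, of perimeters determined by the offspring counts. First I would write the volume $|T_h^\bullet|$ as a sum over tree vertices of the volumes of the attached triangulations (up to a combinatorial correction counted by the tree itself), and then condition on the tree to express the Laplace transform $\mathbb{E}[e^{-\lambda h^{-4}|T_h^\bullet|}]$ as a product over vertices of the Laplace transforms of the individual Boltzmann pieces.

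Next I would use the classical explicit formula for the generating function of the volume of a critical Boltzmann triangulation of a $p$-gon (which is a simple algebraic function of $p$), so that the conditional Laplace transform becomes a product of the form $\prod_v F(\lambda)^{c(v)+\text{const}}$ for some function $F$. Taking expectation over the tree turns this product into an iterated generating function of the offspring distribution $\theta$: indeed, a standard induction on the height yields
\begin{equation*}
\mathbb{E}\bigl[e^{-\lambda|T_h^\bullet|}\bigr] = G\bigl(\Phi^h_\lambda(0)\bigr)
\end{equation*}
for an appropriate initial data $G$ and iterate $\Phi^h_\lambda$ of a function depending on $\lambda$ through $F$. This is precisely the setting where the miraculous explicit formula \eqref{eq:iterPhit} for the iterate (alluded to in the introduction in analogy with Guitter's slice formula) is available, and it reduces the computation of $\mathbb{E}[e^{-\lambda|T_h^\bullet|}]$ to a completely explicit rational/algebraic expression in $h$ and $\lambda$.

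The final step is the asymptotic analysis: substitute $\lambda \leftarrow \lambda h^{-4}$, and expand the explicit formula for $\Phi^h_{\lambda h^{-4}}(0)$ as $h\to\infty$. Since the offspring distribution $\theta$ lies in the domain of attraction of the totally asymmetric $3/2$-stable law, the correct scaling between height and perimeter/volume is indeed $1:2:4$, matching the $h^{-4}$ normalisation. The hyperbolic functions $\cosh$ and $\sinh$ should appear via the exponentials produced by the iteration of the M\"obius-like map governing $\Phi$, with the prefactor $(6\lambda)^{3/4}$ coming from the derivative term $G'$ and the constant $6$ encoding the scaling constant of the $3/2$-stable law attached to $\theta$.

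The main obstacle, in my view, is not any single conceptual step but rather combining them cleanly: one must identify the precise iteration governing the $\delta$-skeleton (as opposed to the $\rho$-skeleton, with its slightly different root-offspring law), verify that the iterate admits the closed form giving a tractable expression for all $h$, and then carry out the delicate asymptotic expansion so that the leading order reproduces exactly $(6\lambda)^{3/4}\cosh((6\lambda)^{1/4})/\sinh^3((6\lambda)^{1/4})$. Keeping track of the exact constants (in particular the $6$) through the scaling limit is where the calculation is most prone to error, and it is here that the specific form of the generating function $\varphi$ in \eqref{eq:offspring} must be exploited to its fullest.
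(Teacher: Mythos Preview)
Your proposal is correct and follows essentially the same route as the paper: the paper packages your ``condition on the tree, take a product over vertices, then use the explicit iterate \eqref{eq:iterPhit}'' into the closed formula \eqref{eq:gcexact} for the generating function $G_h$ of pointed triangulations at height $h$ (derived precisely via the $\delta$-skeleton decomposition you describe), and then simply writes $\mathbb{E}[e^{-\lambda h^{-4}|T_h^\bullet|}]=G_h(x_c e^{-\lambda h^{-4}})/G_h(x_c)$ and performs the asymptotic expansion. In other words, the computation you sketch is exactly what produces \eqref{eq:gcexact}, so rather than redoing it you can quote that formula directly and pass to the limit.
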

The function $G(\Lambda) = \Lambda ^{3/4} \frac{\cosh(\Lambda^{1/4})}{\sinh^{3}( \Lambda ^{1/4})}$ is called the \emph{two-point function} of 2D quantum gravity in the physics literature, see Eq (4.357) in Section 4.7 of \cite{ADJ}. It has first been derived by Ambj\o rn and Watabiki using an early form of the peeling process \cite{AW}. See also the works of Bouttier, Di Francesco and Guitter \cites{BDFG03,BG12,DF05} based on bijective methods. It describes how distances and volumes are related to each other in the continuum limit. In a probabilistic framework it corresponds to the Laplace transform of the law of the volume of a ``Brownian Sphere with two marked points at distance $1$'' when sampled according the the excursion measure. This calculation has been done in  \cite[Proposition 4.6]{CLG16} using Brownian snake techniques.

We  also compute the asymptotic number of triangulations pointed at a given distance of the origin:
\begin{proposition} \label{prop:UIPTdisth}
The number of triangulations with $n$ vertices pointed at distance $h \geq 1$ of the origin vertex is asymptotic as $n \to \infty$ to 
\[ \frac{3}{4 \sqrt \pi} \frac{\sqrt{6}}{2835} \frac{-1+16h+80h^2+152h^3+138h^4+60h^5+10h^6}{h(h+1)(h+2)}
\left( 12\sqrt{3} \right)^{n} n^{-5/2}.
\]
\end{proposition}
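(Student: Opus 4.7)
The plan is to compute the generating function
\[ F_h(x) \;:=\; \sum_{(\mathfrak t,\vec e,\delta)} x^{|\mathfrak t|}, \]
where the sum runs over rooted pointed triangulations for which $\delta$ lies at graph distance exactly $h$ from the origin vertex of $\vec e$, and then to apply singularity analysis to $F_h$ at $x=x_c=1/(12\sqrt 3)$. Since the count in the statement is $[x^n]F_h(x)$, a standard transfer theorem of Flajolet--Sedgewick type will yield the announced asymptotics as soon as the coefficient of the dominant singular term is identified.

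First, I would apply the $\delta$-skeleton decomposition developed in Section \ref{sec:skelinfinity} in the finite setting to any such pointed triangulation. Slicing according to the distance to $\delta$ produces $h$ nested cycles separating $\rho$ from $\delta$, a tree of cycles with offspring generating function $\varphi$ from \eqref{eq:offspring}, and independent Boltzmann triangulations with a boundary attached to its vertices (with the $\delta$-cap of Section \ref{sec:caps} at the topmost vertex). Summing out the Boltzmann layers via their explicit generating functions reduces the computation of $F_h(x)$ to an $h$-fold iteration of a single one-variable generating function, which by the ``miracle'' emphasised in the introduction and recorded in \eqref{eq:iterPhit} admits a closed form. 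The outcome is an explicit expression of $F_h(x)$ as a rational function of $h$ and of the uniformising parameter $u:=\sqrt{1-x/x_c}$.

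Next, I would expand this expression around $u=0$. Because the offspring law $\theta$ is in the domain of attraction of a totally asymmetric $3/2$-stable law, the dominant non-analytic contribution to $F_h$ is of order $(1-x/x_c)^{3/2}=u^{3}$, so that
\[ F_h(x) \;=\; P_h(x) \;+\; c_h\,(1-x/x_c)^{3/2} \;+\; o\!\left((1-x/x_c)^{3/2}\right), \]
with $P_h$ analytic at $x_c$ and $c_h$ a rational function of $h$ which a direct expansion should identify with $\frac{\sqrt 6}{2835}\cdot\frac{-1+16h+80h^2+152h^3+138h^4+60h^5+10h^6}{h(h+1)(h+2)}$. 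Applying the transfer theorem and using $\Gamma(-3/2)=4\sqrt\pi/3$ then gives
\[ [x^n]F_h(x) \;\sim\; \frac{c_h}{\Gamma(-3/2)}\,x_c^{-n}\,n^{-5/2} \;=\; \frac{3\,c_h}{4\sqrt\pi}\,(12\sqrt 3)^{n}\,n^{-5/2}, \]
which is exactly the formula in the statement.

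The main obstacle is the bookkeeping in the second step: producing the closed form for $F_h(x)$ in a shape that makes the expansion at $u=0$ manageable, and matching the emerging rational function of $h$ exactly with the degree-six polynomial in the numerator. A useful consistency check is that, after substituting $h=\Lambda^{1/4}n^{1/4}$ and letting $n\to\infty$, the rescaled coefficient $c_h$ must reproduce the two-point function $G(\Lambda)$ of Proposition \ref{prop:2point}, which ties the present computation to the analysis already performed for that proposition via the same $\delta$-skeleton decomposition.
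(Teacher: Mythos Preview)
Your proposal is correct and follows essentially the same route as the paper: compute the generating function $G_h$ (your $F_h$) explicitly via the $\delta$-skeleton decomposition of Section~\ref{sec:skelretourne} and the closed-form iterates \eqref{eq:iterPhit}, extract the $(1-x/x_c)^{3/2}$ singular term, and apply a Flajolet--Sedgewick transfer theorem. One cosmetic slip: in the $\delta$-skeleton the $\delta$-cap sits at the \emph{root} $\rho$ of $\mathsf{Skel}$ (the ancestor), not at a ``topmost'' vertex, and the relevant finite decomposition is in Section~\ref{sec:skelretourne} rather than Section~\ref{sec:skelinfinity}.
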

See \cite{BDFG03} and \cite[Lemma 2.5]{CD06} for a similar result in the case of quadrangulations using bijective methods. We also give explicit expressions for the generating function of the perimeter and volume of the horohulls in the UIPT (see Section \ref{sec:skelinfinity} for the precise formulas). Taking the  scaling limit we in particular get the following

\begin{proposition}[Scaling limits for horohulls] We have 
\begin{align*}
\lim_{r \to \infty} 
\mathbb E &\Big[ 
\exp \left(- \lambda_1 r^{-4}|\overline{H}_{r} (\mathbf \mathbf{T}_{\infty})| - \lambda_2 r^{-2} |\partial \overline{H}_{r} (\mathbf \mathbf{T}_{\infty})| \right)\Big]\\
& \quad =
\frac
{
\left(\frac{2}{3} + \frac{\lambda_2}{(6 \lambda_1)^{1/2}} \right)^{-1/2}
\sinh \left( (6 \lambda_1)^{1/4}\right) + \cosh \left( (6 \lambda_1)^{1/4}\right)
}
{\Bigg( \left(\frac{2}{3} + \frac{\lambda_2}{(6 \lambda_1)^{1/2}} \right)^{1/2}
\sinh \left( (6 \lambda_1)^{1/4}\right) + \cosh \left( (6 \lambda_1)^{1/4}\right)\Bigg)^3}.
\end{align*}
\end{proposition}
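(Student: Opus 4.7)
The plan is to exploit Theorem~\ref{thm:skeletoninfinity} to rewrite the joint Laplace transform of $(|\overline{H}_r(\mathbf{T}_\infty)|,|\partial \overline{H}_r(\mathbf{T}_\infty)|)$ as the $r$-th iterate of an explicit generating function. Via the skeleton-from-infinity decomposition one identifies $|\partial \overline{H}_r(\mathbf{T}_\infty)|$ with the size $Z_r$ of the $r$-th generation of the tree $\mathsf{Skel}$, and $|\overline{H}_r(\mathbf{T}_\infty)|$ with the total vertex count coming from the $\delta$-cap at the root together with the independent Boltzmann triangulations of $(c(v)+2)$-gons attached to each non-root vertex $v$ of generation~$<r$. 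Since these triangulations are conditionally independent given $\mathsf{Skel}$, integrating them out replaces each vertex $v$ by a multiplicative Laplace-transform weight $f_{c(v)+2}(\lambda_1 r^{-4})$, where $f_m(\mu) := \mathbb{E}[e^{-\mu|T_m|}]$ is the Laplace transform of the volume of a critical Boltzmann triangulation of the $m$-gon.

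Setting $z_r = e^{-\lambda_2 r^{-2}}$ and
\[ \Phi_\mu(z) := \sum_{m\geq 0} \theta(m)\,f_{m+2}(\mu)\,z^m, \]
the contribution of all non-root generations factorizes along $\mathsf{Skel}$ as the iterate $\Phi_{\lambda_1/r^{4}}^{(r)}(z_r)$. The root is handled separately through the offspring law $\nu$ of \eqref{def:nu}, the $\delta$-cap Laplace transform (described in Section~\ref{sec:caps}), and the size-biasing at the ancestor induced by conditioning $\mathsf{Skel}$ on survival. The analytic heart of the argument is the closed-form iteration identity alluded to in the introduction: in the unweighted case $\mu=0$, the change of variable $w = (1-z)^{-1/2}$ linearizes $\varphi$ as $w\mapsto 1+w$, so that $\varphi^{(r)}(z) = 1 - (r + (1-z)^{-1/2})^{-2}$, and a one-parameter deformation of this identity obtained by tracking the weights $f_{m+2}$ governs the iteration of $\Phi_\mu$.

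Plugging in $\mu = \lambda_1 r^{-4}$ and $z_r = e^{-\lambda_2 r^{-2}}$ and rescaling $w = r\,\widetilde{w}$, the $r$-step discrete iteration converges as $r\to\infty$ to the flow of a Riccati-type ODE in $\widetilde{w}$ on $[0,1]$; the coefficient $(6\lambda_1)^{1/4}$ emerges from the $3/2$-stable character of $\theta$ combined with the sharp asymptotics of $f_{m+2}(\mu)$ for large $m$, and the ODE is solved by hyperbolic sines and cosines of $(6\lambda_1)^{1/4}$. The terminal value $z_r = 1 - \lambda_2 r^{-2} + o(r^{-2})$ translates into the initial condition that produces the combination $\tfrac{2}{3} + \lambda_2/(6\lambda_1)^{1/2}$ appearing in both the numerator and denominator. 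The squared $(\cdot)^{-2}$ denominator in $\varphi^{(r)}$ turns, in the scaling limit, into a squared $\sinh$-type factor; the extra $\sinh$-type factor needed to produce the cubic exponent in the denominator of the statement comes from the root contribution---specifically the interplay between the $\delta$-cap, the $\nu$-offspring distribution, and the Kesten-type size-biasing at the ancestor under survival. The main obstacle is precisely this bookkeeping at the root: the bulk iteration follows directly from the miracle identity, whereas producing the exact prefactors $(\tfrac{2}{3} + \lambda_2/(6\lambda_1)^{1/2})^{\pm 1/2}$ and the cubic denominator requires the explicit computation of the $\delta$-cap Laplace transform and careful matching with the asymptotic derivative of $\Phi_\mu^{(r)}$ at the relevant boundary value.
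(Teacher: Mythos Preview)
Your approach is essentially the same as the paper's: both start from the $\delta$-skeleton law of Theorem~\ref{thm:skeletoninfinity}, integrate out the independent Boltzmann slots to obtain a weighted offspring iteration, and then take the scaling limit. Two points where the paper is sharper and you are doing unnecessary work:

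\textbf{(1) No ODE is needed.} Your function $\Phi_\mu(z)=\sum_m\theta(m)f_{m+2}(\mu)z^m$ is not a new object: writing $s=e^{-\mu}$ and $t=t(s)$ as in \eqref{eq:paramst}, one checks directly from $\theta(m)=x_c\,y_c^{m-1}T_{m+2}(x_c)$ that $\Phi_\mu(z)=t\,\varphi_t(z/t)$. So $\Phi_\mu$ is conjugate to $\varphi_t$ by a scaling, and its $r$-th iterate is $t\,\varphi_t^{\{r\}}(z/t)$. The exact formula \eqref{eq:iterPhit} therefore gives the iterate in closed form for every $r$, and the scaling limit is a direct computation (this is exactly what the paper does in Section~\ref{sec:scaling}); there is no need to pass through a Riccati ODE.

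\textbf{(2) The root contribution is a derivative, not a separate hard step.} The ``size-biasing'' you mention is not at the ancestor: in Theorem~\ref{thm:skeletoninfinity} the law of $[\mathsf{Skel}]_r$ carries the factor $p=$ (number of vertices at generation $r$), i.e.\ it is size-biased by the \emph{top} generation. Combined with the $\delta$-cap generating function $K(s_1,\cdot)$ of \eqref{eq:Ksz}, summing over $q=|\partial\overline H_{r+1}|$ with the weight $q$ simply produces a $z$-derivative, and the paper obtains the exact finite-$r$ identity
\[
\mathbb E\Big[s_1^{|\overline H_{r+1}|}s_2^{|\partial\overline H_{r+1}|}\Big]
=\frac{t_1 s_2}{2}\,\varphi_{t_1}(0)\,
\frac{\mathrm d}{\mathrm dz}\!\left(\frac{\varphi_{t_1}^{\{r+1\}}(z)-\varphi_{t_1}^{\{2\}}(0)}{1-\varphi_{t_1}(0)/\varphi_{t_1}^{\{r\}}(z)}\right)_{\!z=s_1s_2/t_1}.
\]
The cubic power in the denominator of the statement then comes out mechanically from differentiating the squared $\sinh$-type denominator in \eqref{eq:iterPhit}, not from a delicate bookkeeping of the Kesten spine. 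What you flag as ``the main obstacle'' is in fact a two-line computation once the exact formula above is in hand.
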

The last proposition should be compared to the analogous result for perimeter and volume of hulls of standard metric balls in the UIPT \cite{M}, see also \cite{CLG16} for the continuous limit. In particular, the perimeter of the horohulls of radius $r$ properly rescaled by $r^{2}$ converges towards the variable $X$ with Laplace transform $ \mathbb{E}[e^{-\lambda X}] = (1+\sqrt{\lambda})^{-3}$, a fact already observed (via a different technique) in the quadrangulation case in \cite{CMM10}. In fact our analysis enables to understand the full scaling limit of the hull process $( r^{-2}  |\partial \overline{H}_{[rt]}|, r^{-4} | \overline{H}_{[rt]}|)_{t \geq 0}$, see Remark \ref{rem:fullscaling}. \medskip

\paragraph{Organization of the paper.} The paper is organized around its two main contributions. 

In Section \ref{sec:skeletongeneral} we recall the classical $\rho$-skeleton decomposition of triangulations due to Krikun and develop our new $\delta$-skeleton decomposition by measuring distances from the pointed vertex. The associated notion of horohulls is introduced at this occasion. When combined  in Section \ref{sec:gs} with the computations performed in \cite{M}, this new representation of pointed triangulations gives access to new distributional identities.

In Section \ref{sec:coalescence} we prove the geodesic coalescence property of the UIPT. Our main tool there is the good old $\rho$-skeleton decomposition of Krikun of the UIPT \cite{Kr}. The idea is to design a specific event within the $\rho$-skeleton which forces geodesic coalescence at a certain scale and to use fine properties on branching process to ensure that there are infinitely many scales at which we have geodesic coalescence.

Finally, we combine these two ingredients in Section \ref{sec:skelinfinity} where we define and study the $\delta$-skeleton of the UIPT, which we call the skeleton seen from infinity. 

\bigskip

\noindent \textbf{Acknowledgments:} We thank J\'er\'emie Bouttier for useful comments. This work was supported by the grants ANR-15-CE40-0013 (ANR Liouville),
ANR-14-CE25-0014 (ANR GRAAL), the ERC GeoBrown and the Labex MME-DII (ANR11-LBX-0023-01).

\tableofcontents

\section{Triangulations and their skeleton decompositions} \label{sec:skeletongeneral}
In this section we recall the background on combinatorics and skeleton decomposition of triangulations. For more details we refer to \cite{CLGfpp} or \cite{M}. Compared to previous works our main contribution is to perform a complete description of the skeleton decomposition in the case of \emph{finite} pointed triangulations by describing the extreme layer ($\rho$-cap and $\delta$-cap respectively in the case of $\rho$ and $\delta$-skeleton decomposition). As the reader will see, the $\delta$-skeleton decomposition is combinatorially a little bit simpler than the $\rho$-skeleton decomposition.

\subsection{Triangulations of the $p$-gon and the root transform}
As usual, all our maps will be rooted, i.e.~given with a distinguished oriented edge whose origin vertex $\rho$ is called the origin of the map. We will only consider rooted type I triangulations in the terminology of Angel \& Schramm \cite{AS}, meaning that loops and multiple edges are allowed. We will also deal with triangulations with simple boundary, which are rooted planar maps such that every face is a triangle except for the face incident to the right of the root edge which can be any simple polygon. If the length of the boundary face is $p$, we will speak of triangulations of the $p$-gon. We will use the classical convention that the map reduced to a single edge joining two distinct vertices is a triangulation of the $2-$gon, called the edge-triangulation.

One of the advantages of dealing with type I triangulations is that triangulations of the sphere can be seen as triangulations of the $1-$gon as already mentioned in \cite{CLGfpp}. To see that, split the root edge of any triangulation into a double edge and then add a loop inside the region bounded by the new double edge, at the corner corresponding to the origin of the root edge. When rerooting this new map at the loop oriented clockwise (so that the interior of the loop lies on its right) we obtain a triangulation of the $1-$gon. Note that this construction also works if the root itself is a loop, and that it is a bijection between triangulations of the sphere and triangulations of the $1-$gon. See Figure \ref{fig:transform-root}. \begin{figure}[!h]
 \begin{center}
 \includegraphics[width=1\linewidth]{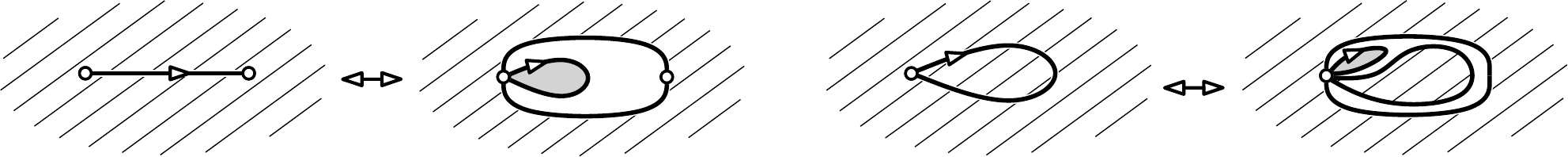}
 \caption{ \label{fig:transform-root} The root transformation (on the right when the initial root edge is already a loop).}
 \end{center} \end{figure}
\subsection{Triangulations of the cylinder and of the cone}
\label{sec:skel}

We now recall and extend the notion of triangulations of the cylinder of  \cite{CLGfpp}. Notice that our definition contains two root edges whereas that of \cite{CLGfpp} only uses one root edge.
\begin{definition} 	\label{def:trigcylinder}
Let $r \geq 0$ be an integer. A \emph{triangulation of the cylinder of height $r$} is a planar map such that all faces are triangles except for two distinguished faces called the bottom and top faces verifying:
\begin{enumerate}
\item The boundaries of the two distinguished faces form two simple cycles, called the bottom and top cycles,
\item Both boundaries contain an oriented edge (the bottom and top roots), so that the bottom face lies on the right of the bottom root and the top face lies on the left of the top root,
\item Every vertex of the top face is at graph distance exactly $r$ from the boundary of the bottom face. Furthermore when $r \geq 1$, edges of the boundary of the top face also belong to a triangle whose third vertex is at distance $r-1$ from the root face.
\end{enumerate}
For every integers $r\geq 0$ and $ p,q \geq 1$, a \emph{triangulation of the $(r,p,q)$-cylinder} is a triangulation of the cylinder of height $r$ such that its bottom face has degree $p$ and its top face has degree $q$.
\end{definition}

\begin{figure}[h!]
\begin{center}
\includegraphics[width=0.8\textwidth]{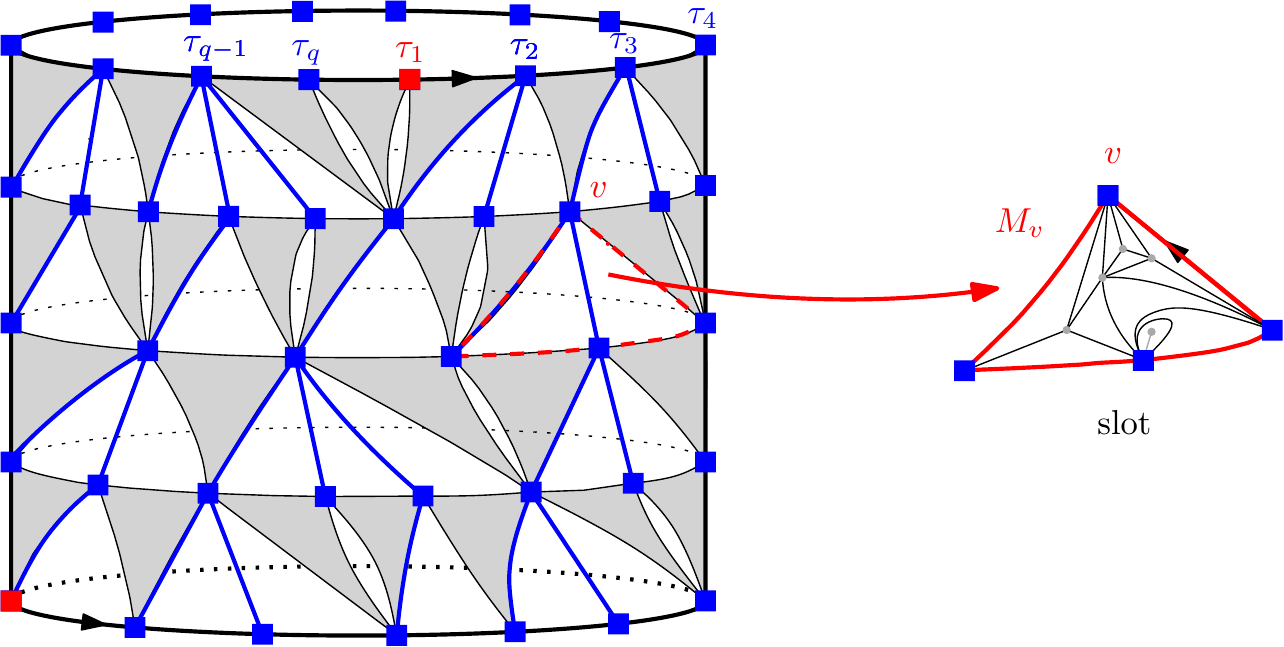}
\caption{\label{fig:genealogy} Skeleton decomposition of a triangulation of the cylinder. The distinguished vertex corresponding to the root edge of the triangulation is the red one on the bottom left. Left: construction of the forest. Right: triangulation with a boundary filling a slot.}
\end{center}
\end{figure}

The \emph{skeleton decomposition} is an encoding of triangulations of the cylinder by forests and simpler maps. In fact, describing the geometry of triangulations of the cylinder makes the bijection trivial as explained in \cites{Kr,CLGfpp}.

Fix $r \geq 0$ and $p,q\geq1$ and $\Delta$ a triangulation of the $(r,p,q)-$cylinder. First, define the growing sequence of hulls of $\Delta$ as follows: for $1 \leq j \leq r$, the ball $B_j(\Delta)$ is the union of all faces of $\Delta$ having a vertex at distance strictly smaller than $j$ from the bottom face, and the hull $\overline{B}_j (\Delta)$ consists of $B_j(\Delta)$ and all the connected components of its complement in $\Delta$ except the one containing the top face.  For every $j$, the hull $\overline{B}_j (\Delta)$ is a triangulation of the $(j,p,q')$-cylinder for some non negative integer $q'$, and we denote its top boundary by $\partial_j \Delta$. By convention $\partial_0 \Delta$ is the boundary of the bottom face of $\Delta$.

We can then define a genealogy on the vertices of $\partial_i \Delta$ for $0 \leq i \leq r$ as follows.  
Notice first that, for $1 \leq i \leq r$, every edge of $\partial_i \Delta$ belongs to exactly one face of $\Delta$ whose third vertex belongs to $\partial_{i-1} \Delta$. Such faces are called down triangles (in gray on Figure \ref{fig:genealogy}). These faces separate the map into triangulations with simple boundaries called \emph{slots} and each vertex $v \in \partial_i \Delta$ for $1 \leq i \leq r$ is located at the top of one slot $M_{v}$ (in white in Figure \ref{fig:genealogy}). We then declare that $v$ is the parent of the vertices of $\partial _{i-1} \Delta$ which belong to the bottom part of the slot $M_{v}$ except for the right-most one. This definition should be clear on 
Figure \ref{fig:genealogy}.

These relations define a forest $F$ of $q$ rooted plane trees such that the maximum height of this forest is equal to $r$. The bottom root on $\partial_{0} \Delta$ enables to distinguish a point at maximum height in this forest and the top root enables us to distinguish a tree $\tau_{1}$ and we order the other ones $\tau_{2}, ... , \tau_{q}$ in cyclic order. Together with the data given by the triangulations with a boundary filling in the slots, this is sufficient to completely describe $\Delta$.

If $F = (\tau_{1}, ... , \tau_{q})$ is a forest, we denote by $F^{\star}$ the set of all of its nodes which are not at maximum height, and if $v \in F$ we write $c(v)$ for the number of children of $v$ in $F$:

\begin{center}
\fbox{ \begin{minipage}{14cm}
\textbf{Skeleton decomposition of triangulations of the cylinder:} There is a bijection between, on the one hand triangulations of the $(r,p,q)$-cylinder $\Delta$ with $p,q,r \geq 1$ and, on the other hand, triplets $$\Big(F=(\tau_{1}, ... , \tau_{q}),x, (M_{v}:{v \in F^{*}})\Big)$$ where $F=(\tau_{1}, ... , \tau_{q})$ is a forest of $q$ trees of maximum height $r$ having $p$ vertices at maximum height, $x \in F\backslash F^{\star}$ is a distinguished vertex at maximum height and $M_{v}$ is a triangulation of the $(c(v)+2)$-gon for all $v \in F^{\star}$.
\end{minipage}}
\end{center}

\bigskip

\begin{remark} \label{rek:root}
When $p=1$ there is a unique vertex $x$ at maximum height in $F$ and we sometimes suppose as in \cite{CLGfpp} that the bottom and top roots are chosen so that $x \in \tau_{1}$. In other words, one can get rid of the top root in the case $p=1$ by declaring in the above bijection that $x \in F^\star \cap \tau_{1}$. Equivalently, one can root the top boundary uniformly.
\end{remark}

\paragraph{Extention to $p=0$ and triangulations of the cone.} The above construction can easily be extended when $p=0$, i.e.~when the bottom face is actually replaced by a single point: we speak of \emph{triangulations of the cone}. The definition is mutatis mutandis the same as Definition \ref{def:trigcylinder} (no need of  bottom root in this case), with the convention that the only triangulation of the cone with height $r=0$ is the ``vertex map'' for which $p=q=0$.  The skeleton decomposition yields in this case a bijection between triangulations of the cone of height $r$ so that the top face has perimeter $q$ and pairs 
$$\big( F= (\tau_{1}, ... , \tau_{q}) , (M_{v} :{v \in F})\big)$$ where $F$ is a forest of $q$ trees of maximum height $r-1$ and $(M_{v})$ are triangulations of the $(c(v)+2)$-gon. Notice that the slots are now indexed by the vertices of $F$ (as opposed to  $F^{\star}$ in the case of a triangulation of the cylinder). See Figure \ref{fig:cone} for an illustration. In the case of the vertex map, the forest $F$ is the empty forest. 

\begin{figure}[h!]
\begin{center}
\includegraphics[width=0.8\textwidth]{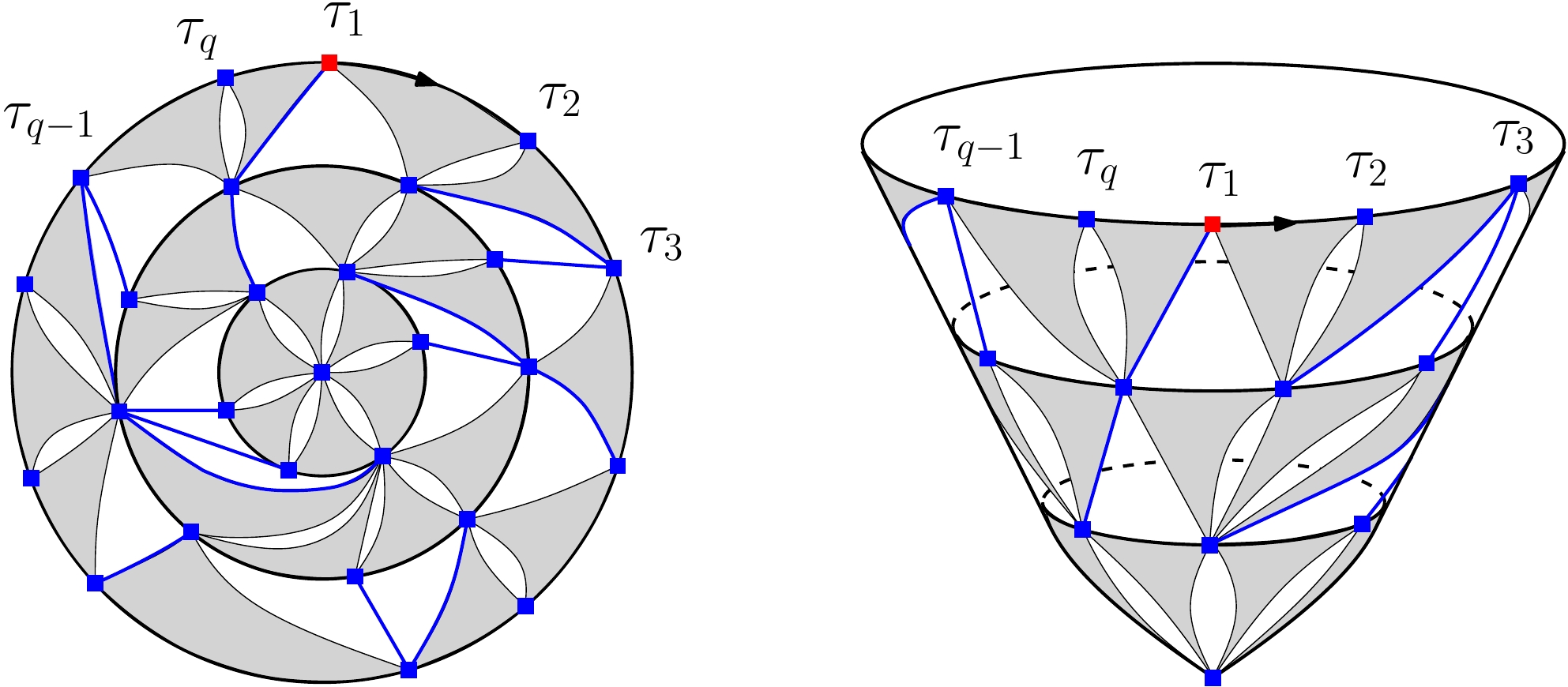}
\caption{\label{fig:cone} Skeleton decomposition of a triangulation of the cone, illustrated in two ways.}
\end{center}
\end{figure}

\subsection{The $\rho$-skeleton decomposition}

Given a rooted and pointed triangulation we will see how to associate to it triangulations of the cylinder obtained by considering hulls of balls around the origin $\rho$ of the map.  This has already been observed and used in \cites{Kr,CLGfpp,M}. This allows to completely describe finite pointed triangulations via a skeleton decomposition, except that the last layer has to be treated separately. Since the distances are measured from the origin $\rho$ of the map we call this procedure the $\rho$-skeleton decomposition.

\subsubsection{Hulls as triangulations of the cylinder}
\label{sec:hulls}
Let $( \mathfrak{t},\delta)$ be a finite rooted triangulation of the sphere given with a distinguished point $\delta$. Recall that $ \mathfrak{t}$ may also be seen as a triangulation of the $1$-gon after performing the root transform and we shall do so. We denote by $h$ the \emph{height} of $\delta$, that is the  distance between $\delta$ the origin vertex $\rho$ of $ \mathfrak{t}$. For every $r \geq 0$, the ball 
$B_r( \mathfrak{t})$ is the submap of $ \mathfrak{t}$ composed by the faces of $ \mathfrak{t}$ having at least one vertex at distance strictly smaller than $r$ from $\rho$, with the convention that $B_0( \mathfrak{t})$ is just the root edge of $ \mathfrak{t}$. For every $0 \leq r < h$ we define the \emph{hull} $\overline{B}_r ( \mathfrak{t}, \delta)$  of radius $r$ of $( \mathfrak{t}, \delta)$ by the submap of $ \mathfrak{t}$ induced by $B_r( \mathfrak{t})$ and the connected components of  $ \mathfrak{t} \setminus B_r( \mathfrak{t})$ that do not contain $\delta$. See Figure \ref{fig:hull} for an illustration.
\begin{figure}[ht!]
\begin{center}
\includegraphics[width=0.8\textwidth]{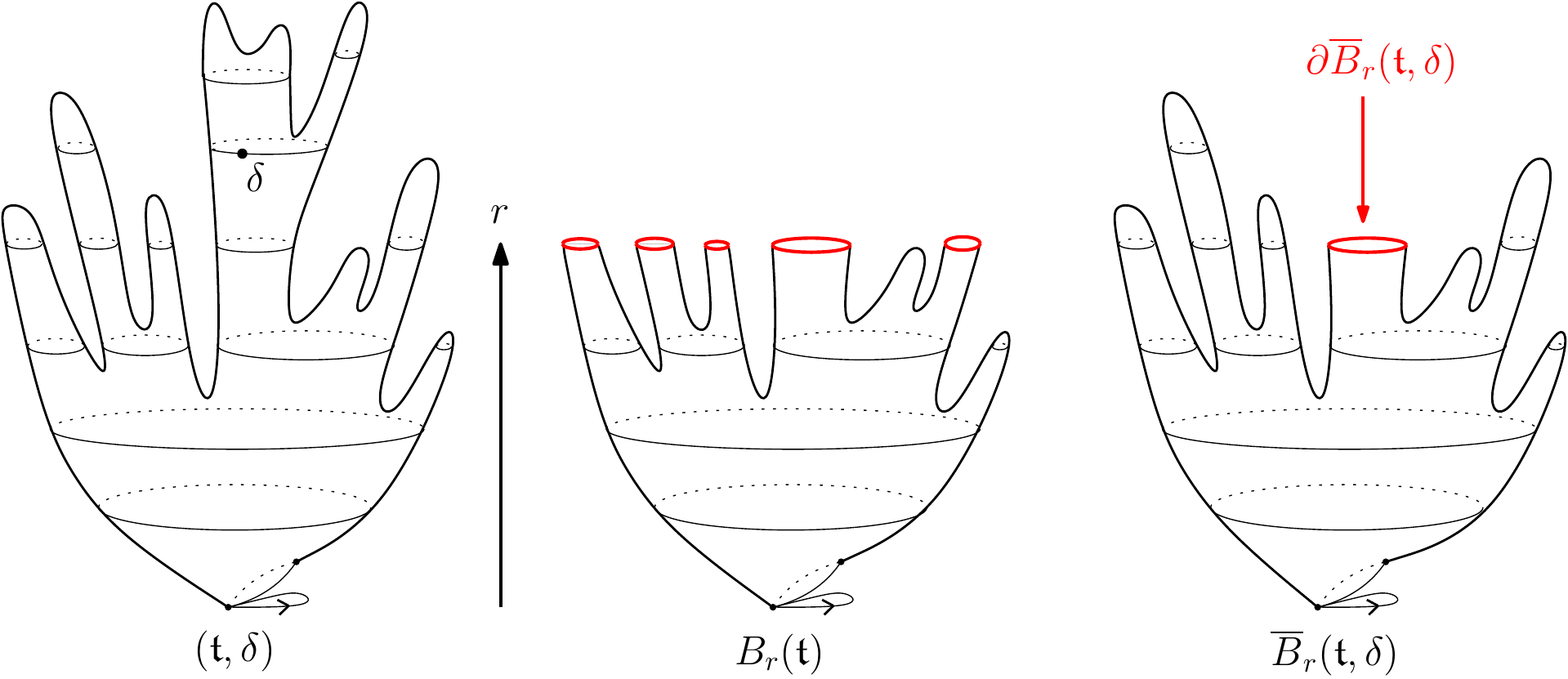}
\caption{\label{fig:hull} Ball and hull of radius $r$ in a finite triangulation. The triangulation is represented as a two-dimensional "cactus" where the height of each point is equal to its distance to the origin vertex.}
\end{center}
\end{figure}

For  any $r \geq 0$ we can define $ \overline{B}_{r}( \mathfrak{t}, \infty)$  when $ \mathfrak{t}$ is an infinite triangulation of the plane (i.e.~with one end) and $\delta= \infty$ is an imaginary point at infinity. In both cases in is easy to see that:

\begin{center} \emph{For any $r < h$ the hulls $\overline{B}_{r}( \mathfrak{t}, \delta)$ are triangulations of the $(r,1,q)$ cylinder for some $q \geq 1$}. \end{center}
Here, we implicitly used Remark \ref{rek:root} to canonically root $\overline{B}_{r}( \mathfrak{t}, \delta)$ using the root edge of $ \mathfrak{t}$ to obtain a triangulation of the cylinder whose top root starts from the unique tree of maximum height.  When dealing with the UIPT  we simply write $ \overline{B}_{r} \equiv \overline{B}_{r}( \mathbf{T}_{\infty})$ to lighten notation. In particular, we can encode $\overline{B}_{r}( \mathfrak{t},\delta)$ via the skeleton decomposition as in Section \ref{sec:skel} and we call this encoding the $\rho$-skeleton decomposition of $( \mathfrak{t}, \delta)$ at height $r$. Notice that in this encoding, the trees describing $\overline{B}_{r}( \mathfrak{t},\delta)$ grow \emph{downwards} towards the origin in $ \mathfrak{t}$.

The entire pointed triangulation is then completely described by gluing $\overline{B}_{h-1}( \mathfrak{t}, \delta)$ and the last layer $(\mathfrak{t},\delta) \setminus \overline{B}_{h-1}( \mathfrak{t}, \delta)$. This last layer is simply a triangulation of the $p-$gon having a pointed vertex $\delta$ at distance $1$ from its boundary. We call such triangulations $\rho-$caps and study them in the next subsection. This decomposition is still valid if $ \mathrm{d_{gr}}(\rho,\delta)=1$, in which case the perimeter of the $\rho$-cap is $1$ and the hull $\overline{B}_{0}( \mathfrak{t}, \delta)$ is just the root loop.

Up to adding an additional vertex corresponding to this $\rho-$cap and linking it to the ancestors of the trees describing $\overline{B}_{h-1}( \mathfrak{t}, \delta)$ we can summarize  our discussion as follows:

\begin{center}
\fbox{ \begin{minipage}{14cm}
\textbf{The $\rho$-skeleton decomposition:} The above decomposition is a bijection between, on the one hand finite rooted and pointed triangulations $ ( \mathfrak{t}, \delta)$ for which $ \mathrm{d_{gr}}(\rho,\delta)\geq 1$ and, on the other hand,  plane trees $ \mathsf{Krik} \subset \mathfrak{t}$ with a single vertex of maximal height, whose other vertices carry triangulations $(M_{v} : v \in \mathsf{{Krik}}^\star)$ where
\begin{itemize}
\item for any $v \ne \rho$ the map $M_{v}$ is a triangulation of a $(c(v)+2)$-gon,
\item for $v=\rho$ the map $M_{\rho}$ is a $\rho-$cap with perimeter $c(\rho)$.
\end{itemize}
\end{minipage}}
\end{center}

\subsubsection{The last $\rho-$layer is a $\rho-$cap.}
\label{sec:rhocap}

Let us have a more precise look at the last layer $(\mathfrak{t},\delta) \setminus \overline{B}_{h-1}( \mathfrak{t}, \delta)$. As already stated, this slice is a triangulation of the $p-$gon having a pointed vertex $\delta$ at distance $1$ from its boundary.

\begin{figure}[ht!]
\begin{center}
\includegraphics[width=0.8\textwidth]{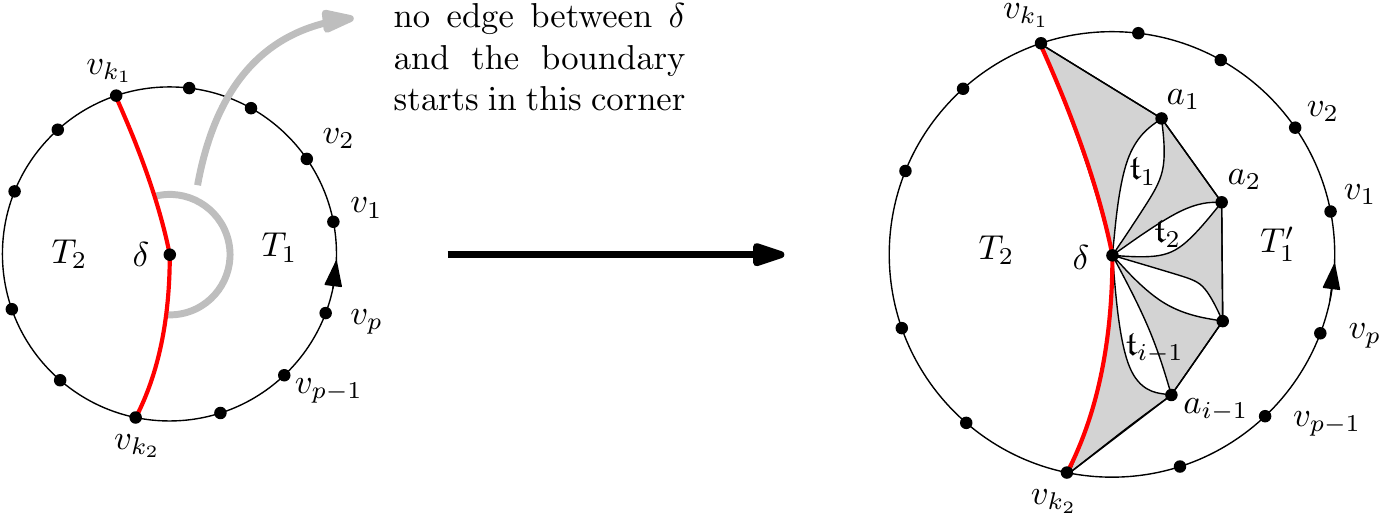}
\caption{\label{fig:rhocap} Decomposition of a $\rho$-cap.}
\end{center}
\end{figure}

Fix $\mathbb K$ a $\rho$-cap of perimeter $p$ and denote by $v_1, v_2, \ldots v_p$ the vertices of the boundary of $\mathbb K$ listed in counterclockwise order, starting with the target of the root edge $(v_p,v_1)$. Figure \ref{fig:rhocap} illustrates the following arguments. If we cut along the first edge joining $\delta$ and the boundary after the root and along the last such edge, we divide the boundary of $\mathbb K$ into two arcs. The first one is $(v_{k_2}, \ldots , v_{k_1})$ with $1 \leq k_1,\leq k_2 \leq p$ and contains the root edge. The second one is $(v_{k_1},\ldots , v_{k_2})$ and may consist of a single vertex if $k_1 = k_2$. This in turn separates the map $\mathbb K$ into two triangulations with simple boundary.

The first one, denoted $T_1$ in Figure \ref{fig:rhocap}, contains the root edge, has perimeter $p-(k_2-k_1)+2 = k + 2$ with $1 \leq k \leq p$, and has no edge between $\delta$ and $(v_{k_2}, \ldots , v_{k_1})$ other than the two edges of its boundary. The second one, denoted $T_2$, has perimeter $p-k+2$. Observe that to recover $\mathbb K$ from $T_1$ and $T_2$, we need to know where to place the root of $\mathbb K$ which is given by an additional integer $1 \leq j \leq k$.

We now consider the hull of the ball of radius $1$ around $\delta$ in $T_1$. The boundary of this hull is composed of the two boundary edges $(v_{k_1} , \delta)$ and $(\delta, v_{k_2})$ as well as a simple inner boundary whose vertices are denoted $a_{1}, a_{2}, ... , a_{i} = v_{k_2}$ in Figure \ref{fig:rhocap}. Because of our assumption, these vertices are all disjoint from $(v_{k_2}, \ldots, v_{k_1})$. Performing the skeleton decomposition, this hull can be decomposed via downward triangles as well as slots of perimeter $2$  between them which we are denoted $ \mathfrak{t}_{1}, ... , \mathfrak{t}_{i-1}$, for some $i \geq 1$ on Figure \ref{fig:crown}. Notice that there is no slot on the left of the first downward triangle $ (v_{k_1}, a_{1}, \delta)$ and on the right of the last downward triangle $ (v_{k_2}, a_{i-1}, \delta)$ due to our assumption on edges joining $\delta$ to the boundary of $T_1$. This decomposes the triangulation $T_1$ into a triangulation of the $(k+i)$-gon $T_1'$ and $i-1$ triangulations of the $2$-gon. Let us summarize: \medskip 

\begin{center}
\fbox{
\begin{minipage}{14cm}
\textbf{Decomposition of $\rho-$caps:} The above decomposition is a bijection between, on the one hand  $\rho-$caps with perimeter $p \geq 1$, and on the other hand vectors
$$ (k,j,i, ( \mathfrak{t}_{1}, ... , \mathfrak{t}_{i-1}), T_{k+i}, T_{p-k+2})$$ where $1 \leq j \leq k \leq p$  and $i\geq 1$ are integers, $ \mathfrak{t}_{1}, ... , \mathfrak{t}_{i-1}$ are triangulations of the $2$-gon, $T_{k+i}$ is a triangulation of the $(k+i)$-gon and $T_{p-k+2}$ is a triangulation of the $(p-k+2)$-gon.
\end{minipage}
}
\end{center}

\subsection{The $\delta$-skeleton decomposition}
\label{sec:skelretourne}

The $\rho$-skeleton decomposition of pointed triangulations has three main downsides. First, the tree coding the skeleton grows from $\delta$ down to the root.  Second, this tree always has a single vertex of maximal height. And third, the description of $\rho$-cap is somehow complicated. Exchanging the roles of $\delta$ and $\rho$ will not only make the tree grow from the origin vertex $\rho$ of the triangulation, but as we will see it also gives trees with no constraint on the number of vertices with maximal height. Besides, the associated notion of $\delta$-cap is simpler than the notion of $\rho$-cap. We present in this section this new decomposition that we call the $\delta$-skeleton decomposition.

\subsubsection{(Co-)Horohulls as triangulations of the cone}

As above let $( \mathfrak{t}, \delta)$ be a finite rooted and pointed triangulation of the sphere and let $h = \mathrm{d_{gr}}(\rho, \delta)$. We suppose that $h \geq 1$. By reversing the roles of $\delta$ and $\rho$, for any $r \in \{1, \ldots , h \}$, one can consider the ball of radius $h-r$ around $\delta$ in $\mathfrak t$ and denote $ \overline{\mathfrak{C}}_{r}( \mathfrak{t}, \delta)$ its hull by filling-in the connected components of its complement that do not contain the origin $\rho$ of the map and call it the \emph{co-horohull} of height $r$. We will in fact be more interested in the complement $$ \overline{H}_{r}( \mathfrak{t},\delta):= \mathfrak{t} \backslash \overline{\mathfrak{C}_{r}}( \mathfrak{t}, \delta),$$ which we name the horohull of radius $r$, see Figure \ref{fig:horohull}. When $h=r$ we simply set $ \overline{\mathfrak{C}}_h(\mathfrak t, \delta) = \delta$ so that $\overline{H}_{r}( \mathfrak{t},\delta)$ is the full triangulation $ \mathfrak{t}$ ``minus the vertex'' $\delta$ which we interpret as being $(  \mathfrak{t}, \delta)$ itself. 

\begin{figure}[ht!]
\begin{center}
\includegraphics[width=0.8\linewidth]{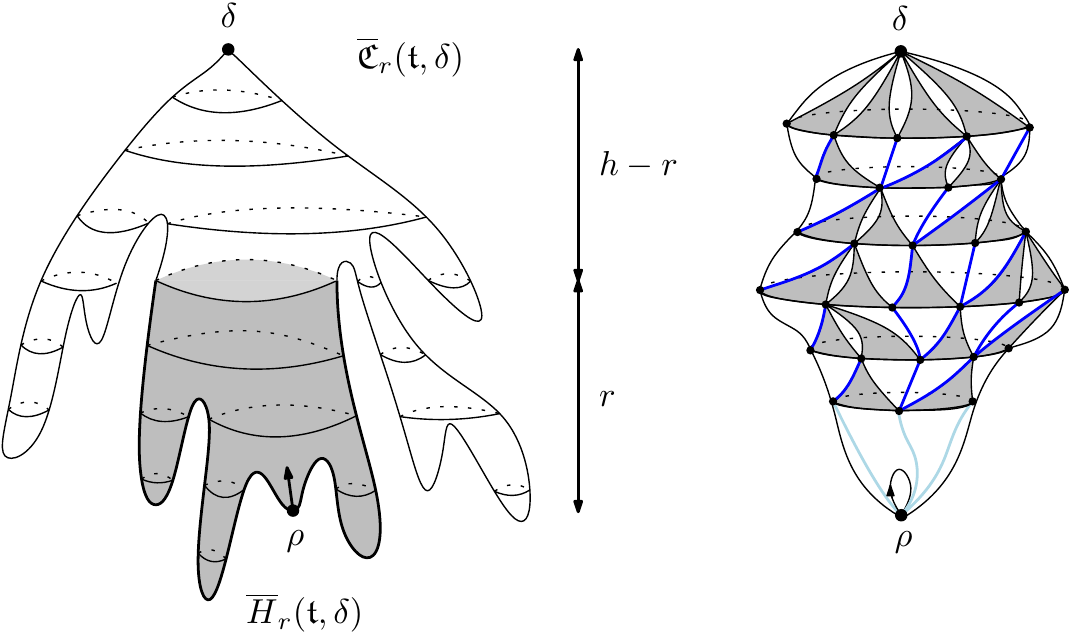}
\caption{\label{fig:horohull} Co-horohull and horohull of radius $r$ in a finite pointed triangulation. Right: When $r=1$,
the $\delta$-skeleton decomposition of $\overline{ \mathfrak{C}}_{1}( \mathfrak{t},\delta)$ (in dark blue) can be transformed into a tree by adding the light blue edges.}
\end{center}
\end{figure}

The reader may have already guessed that $ \overline{ \mathfrak{C}}_{r}( \mathfrak{t},\delta)$ is a triangulation of the cone... strictly speaking this is not the case since, at least when $r\ne h$, the object $ \overline{ \mathfrak{C}}_{r}( \mathfrak{t},\delta)$ does not possess a root edge on its boundary. To do so we choose a deterministic procedure \footnote{For example, we can consider the right-most geodesic going from $\rho$ to $\delta$ and leaving the root loop on its right and root $ \overline{ \mathfrak{C}}_{r}( \mathfrak{t},\delta)$ so that the origin of the root edge lies on this geodesic.}, depending on $ \overline{H}_{r}( \mathfrak{t}, \delta)$ only, which associates a root edge on the boundary of  $\overline{ \mathfrak{C}}_{r}( \mathfrak{t},\delta)$. Once this problem has been resolved we indeed have 
\begin{center}  \textit{For all $1 \leq r \leq h$ the map $ \overline{\mathfrak{C}_{r}}( \mathfrak{t},\delta)$ is a triangulation of the cone of height $h-r$}.  \end{center}

In particular, the entire pointed triangulation $(\mathfrak{t},\delta)$ can be described by gluing $\overline{H}_{1}( \mathfrak{t}, \delta)$ onto the triangulation of the cone $ \overline{\mathfrak{C}}_{1}( \mathfrak{t}, \delta)$ of height $h-1$. Notice that $\overline{H}_{1}( \mathfrak{t}, \delta)$ is not a triangulation of the $p$-gon in the usual sense since it carries the root edge in the bulk. They are also different from the $\rho$-caps studied in the previous section. We will study them precisely in the next subsection and name them $\delta$-caps. Up to adding an additional vertex corresponding to this $\delta$-cap and linking its edges to the roots of the trees describing $\overline{\mathfrak{C}}_{h-1}( \mathfrak{t}, \delta)$ we can summarize  our discussion as follows, see Figure \ref{fig:horohull} (right):

\begin{center}
\fbox{ \begin{minipage}{14cm}
\textbf{The $\delta$-skeleton decomposition:} The above decomposition is a bijection between, on the one hand finite rooted and pointed triangulations $ ( \mathfrak{t}, \delta)$ for which $ \mathrm{d_{gr}}(\rho,\delta)\geq 1$ and, on the other hand,  plane trees $ \mathsf{Skel} \subset \mathfrak{t}$ whose vertices carry triangulations $(M_{v} : v \in \mathsf{{Skel}})$ where
\begin{itemize}
\item for any $v \ne \rho$ the map $M_{v}$ is a triangulation of a $(c(v)+2)$-gon,
\item for $v=\rho$ the map $M_{\rho}$ is a horohull of radius $1$ with perimeter $c(\rho)$.
\end{itemize}
\end{minipage}}
\end{center}

\begin{remark} When $ \mathrm{d_{gr}}(\rho,\delta)=1$ then $\overline{ \mathfrak{C}}_{1}( \mathfrak{t},\delta)$ is reduced to the vertex map and the $\delta$-cap $\overline{H}_{1}( \mathfrak{t}, \delta)$ is equal to $( \mathfrak{t},\delta)$ itself. To be coherent, we say that in this case $\overline{H}_{1}( \mathfrak{t}, \delta)$ is a $\delta$-cap with perimeter $0$, so that the tree $ \mathsf{Skel}$ consists of a single vertex.
\end{remark}

\subsubsection{The first $\delta-$layer is a $\delta-$cap}
\label{sec:caps}

Let us have a more precise look at the structure of $\overline{{H}}_1 ( \mathfrak{t}, \delta)$, the horohull of radius $1$ constituting the first layer of the $\delta$-skeleton decomposition. By the root transform, this is a planar map of the $1-$gon.  If $ \mathrm{d_{gr}}(\rho,\delta) \geq 1$ all of its interior faces are triangles except for one face which is bounded by the simple cycle $\partial \overline{H}_{1}( \mathfrak{t},\delta)$ of perimeter $p \geq 1$. Furthermore, this special face has at least one vertex at distance exactly $1$ from $\rho$. We call such triangulations $\delta$-cap, and the degree of the special face is the perimeter of the $\delta$-cap. When $\mathrm{d_{gr}}(\rho,\delta) = 1$ we set $p=0$ and declare that a $\delta$-cap with perimeter $0$ is simply a rooted pointed triangulation $( \mathfrak{t}, \delta)$ such that $ \mathrm{d_{gr}}(\rho,\delta)=1$. In this case the distinguished vertex $\delta$ should be interpreted as a ``boundary of perimeter $0$''. \medskip 

Fix $ \mathbb{K}$ a $\delta$-cap with perimeter $p$, Figure \ref{fig:crown} illustrates the following arguments. We cut along the first edge (in clockwise order) before the root loop joining $\rho$ and the outer boundary of $\mathbb{K}$, as well as along the root loop to get a triangulation of the $(p+3)$-gon $\mathbb{K}'$. Let us denote by $\rho_1, \rho_2, v_1, \ldots , v_p,v'_1$ the vertices of the boundary listed in counterclockwise order, the oriented edge $(\rho_1,\rho_2)$ being the root edge of $\mathbb{K}'$ (corresponding to the root-loop of $\mathbb{K}$) and $v_1, v_1'$ being the same vertex in $\mathbb{K}$. Since we took the first edge joining $\rho$ to the outer boundary of $\mathbb{K}$, the triangulation $\mathbb{K}'$ has no edge joining $\rho_1$ and the vertices $v_1, \ldots, v_p$. In addition, the boundary edge $(v_1',\rho_1)$ is the only edge of $\mathbb{K}'$ joining the two vertices $v_1'$ and $\rho_1$. This operation is still valid if $p=0$ and the vertices $v_1$ and $v'_1$ are the same.

\begin{figure}[ht!]
\begin{center}
\includegraphics[width=0.8\textwidth]{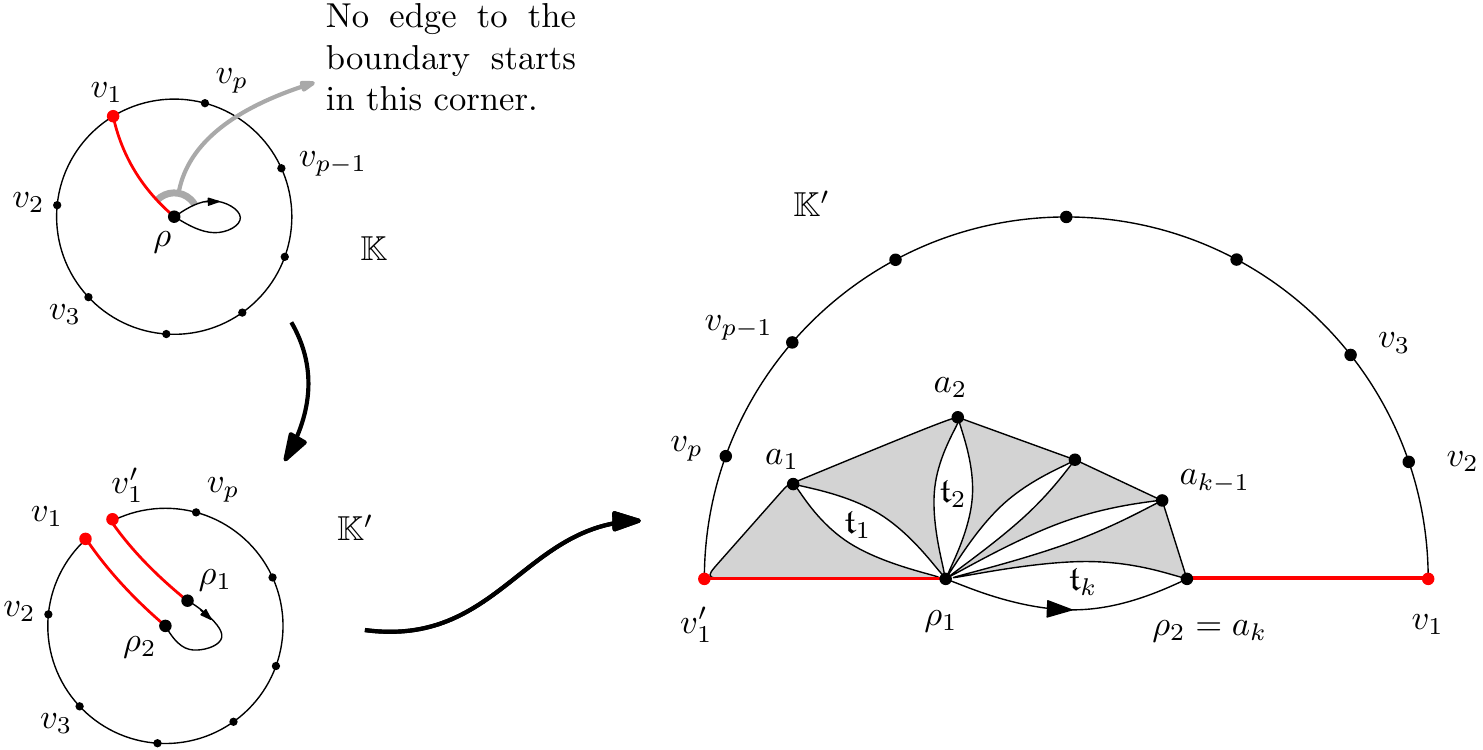}
\caption{\label{fig:crown} Decomposition of a $\delta$-cap.}
\end{center}
\end{figure}

We now consider the hull of the ball of radius $1$ around $\rho_{1}$ in $\mathbb{K}'$. The boundary of this hull is composed of the two boundary edges $(v_{1}' , \rho_{1})$ as well as $(\rho_{1}Ê, \rho_{2})$ and of a simple inner boundary whose vertices are denoted $a_{1}, a_{2}, ... , a_{k} = \rho_{2}$ as in Figure \ref{fig:crown}. Because of our assumption, these vertices are all disjoint from $v_{1}, v_{2}, ... , v_{p},v_{1}'$. Performing the skeleton decomposition, this hull can be decomposed via downward triangles as well as slots of perimeter $2$  between them which we are denoted $ \mathfrak{t}_{1}, ... , \mathfrak{t}_{k}$ on Figure \ref{fig:crown}. Notice that there is no slot on the left of the first downward triangle $ (v_{1}', a_{1}, \rho_{1})$ because we supposed that $(v_{1}' ,\rho_{1}) $ is the only edge joining $v_{1}'$ and $\rho_{1}$. Let us summarize: 

\begin{center}
\fbox{ \begin{minipage}{14cm}
\textbf{Decomposition of $\delta$-caps:} The above decomposition is a bijection between, on the one hand $\delta$-caps with perimeter $p \geq 0$, and on the other hand triplets
$$ (k, ( \mathfrak{t}_{1}, ... , \mathfrak{t}_{k}), T_{p+k+1})$$ where $k \geq 1$ is an integer, $ \mathfrak{t}_{1}, ... , \mathfrak{t}_{k}$ are triangulations of the $2$-gon and $T_{p+k+1}$ is a triangulation of the $(p+k+1)$-gon.
\end{minipage}}
\end{center}

\section{Generating series} \label{sec:gs}
In this section we derive the generating series of the various models of triangulations we encountered (cylinder, cone and caps), counted by vertices. In a more probabilistic point of view, these generating series are, up to multiplicative constants, the generating functions of the volume of Boltzmann triangulations (of the cylinder, the cone or caps).  To perform these explicit computations, we rely on enumeration results developed in \cite{M} which are recalled below. We then derive a few interesting distributional consequences, among which a quick derivation of the $2$ point function of 2D gravity (Proposition \ref{prop:2point}).

\subsection{Enumeration of triangulations} \label{sec:combi}
The enumeration of triangulations of the $p-$gon is now well known and can be found  in \cite{GouldenJackson}. Let $\mathcal{T}_{n,p}$ be the set of all triangulations of the $p-$gon with $n$ inner vertices (\emph{i.e.} vertices that do not belong to the boundary face) and define the bivariate generating series
\begin{equation}
\label{eq:Tseries}
T(x,y) = \sum_{p \geq 0} \sum_{n \geq 0} \left| \mathcal{T}_{n,p} \right| x^n y^{p} = \sum_{p \geq 0} T_p(x) \, y^{p}.
\end{equation}
This series can be computed explicitly and is analytic in both variables if $|x| < x_c$ and $|y| < y_c$ where
\begin{equation}
\label{eq:radii}
x_c := \frac{1}{12 \sqrt{3}}
\quad \text{and} \quad
y_c := \frac{1}{12}.
\end{equation}
We refer to \cite{Kr} for details and in particular for closed formulae for  $\# \mathcal{T}_{n,p}$. As is now classical, we call a critical Boltzmann triangulation of the $p$-gon a random triangulation of the $p$-gon $\mathsf T$ whose law is given by
\begin{align} \label{eq:Boltz}
\mathbb P \left( \mathsf T = \Delta \right) = \frac{x_c^{|\Delta|-1}}{T_p(x_c)}
\end{align}
for every triangulation of the $p$-gon $\Delta$.

A corollary of the enumeration of triangulations, which is derived in \cite[Section 3.1]{M} is the following. For $s \in [0,1]$ denote by $t := t(s) \in [0,1]$ the unique power series in $s$ solution of the equation
\begin{equation}
\label{eq:paramst}
s = t \sqrt{3-2t}
\end{equation}
such that $t(0) = 0$.
This series has nonnegative coefficients, is analytic on $\mathbb C \setminus [1,+\infty[$, and $t(1)=1$. To simplify notation, we will drop the dependency in $s$ and simply write $t$ for $t(s)$. With this notation, for every $s \in [0,1]$, define the function $\varphi_t$ for $z\in [0,1]$ by
 \begin{eqnarray}
\varphi_t(z) &\overset{ \mathrm{def}}{=}& x_c  s \sum_{p \geq 0}   (y_c  t)^{p-1} \cdot T_{p+2}(x_c  s) \, z^p\nonumber \\  &=& \frac{x_c  s}{(y_c  t)^2 \cdot z} \left( T(x_c  s, y_c  t \cdot z) - T(x_c  s,0) \right)\nonumber \\
& \underset{ \mbox{\cite[Sec. 3.1]{M}} }{=}&  1 - \left( \frac{1}{\sqrt{1-z}} \sqrt{\frac{3 -2t}{t}} + \sqrt{1 + \frac{3}{1-z} \left(\frac{1 -t}{t} \right)}\right)^{-2}. \label{eq:exprPhit} \end{eqnarray}
This is the generating function of a probability distribution $\theta_{t}$ on $\mathbb Z_+$. When $t=1$ we have $\varphi_{t} = \varphi$  as in \eqref{eq:offspring} and so $\theta_{1}$ coincides with the offspring distribution $\theta$. An important  fact \cite[Lemma 3]{M} is that the iterates of these generating functions are again explicit: if $\varphi_t^{\{r\}}$ is the $r$-th iterate of the function $\varphi_t$  then 
\begin{equation}
\label{eq:iterPhit}
\varphi_t^{\{ r\}} (z) = 1 - 3 \frac{1-t}{t \, \left( \sinh \left( \sinh^{-1} \left( \sqrt{\frac{3(1-t)}{t(1-z)}}\right) + r \cosh^{-1} \left(\sqrt{\frac{3-2t}{t}} \right) \right) \right)^2}
\end{equation}
for every $r \geq 0$ and $z \in [0,1]$. A simple computation gives the alternate expression
\[
\varphi_t^{\{r\}} (z) = 1- \frac{1-z}{
\left(
\sqrt{1+\frac{t(1-z)}{3(1-t)}} \, 
\sinh \left( r \cosh^{-1} \left(\sqrt{\frac{3-2t}{t}}\right) \right)
+
\cosh \left( r \cosh^{-1} \left(\sqrt{\frac{3-2t}{t}}\right) \right)
\right)^2},
\]
showing that the radius of convergence of $\varphi_t^{\{r\}}$ is $\frac{3-2t}{t} \geq 1$, and that these functions are analytic on $\mathbb C \setminus [\frac{3-2t}{t} , + \infty[$, making them amenable to singularity analysis and transfer theorems, a fact that we will need later.

\subsection{Triangulations of the cylinder and of the cone and the caps}

\paragraph{Cylinder.} For $r \geq 0$ and $p, q\geq 1$, we denote by $C_{r,p,q}(x)$ the generating series of triangulations of the $(r,p,q)-$cylinder counted by vertices. That is $C_{r,p,q}(x) = \sum_{\mathfrak t} x^{|\mathfrak t|}$, the sum being taken over all triangulations of the $(r,p,q)-$cylinder and $x \in [0,x_{c}]$. In the skeleton decomposition, each vertex of the coding forest, except those of maximum height, corresponds to a triangulation of the $(c(v)+2)$-gon. Recalling that for every $m \geq 1$ the generating series of triangulations of the $m$-gon counted by \emph{inner} vertices is $T_m(x)$, the skeleton decomposition of triangulations of the cylinder (Section \ref{sec:skel}) translates into the identity
\[
C_{r,p,q}(x) = p \cdot x^p \sum_{F \in \mathcal{F}(r,p,q)} \quad \prod_{v \in F^\star} x T_{c(v)+2}(x)
\] 
where $\mathcal{F}(r,p,q)$ is the set of all forests of $q$ trees with maximum height $r$ and with $p$ vertices at maximum height. The factor $p$ comes from the marked vertex on the bottom cycle and the term $x^p$ counts vertices of the bottom cycle which are not taken into account in the product over $F^\star$. In the product, each $x$ before the series $T_{c(v)+2}$ allows to count vertices of each cycle of the skeleton. Since $\prod_{v \in \tau} \alpha^{c(v) -1} = \alpha^{-1}$ for every rooted finite tree $\tau$ and every real $\alpha$, using the notation introduced in the last section and putting $x = x_{c}s$ for $s \in [0,1]$ with the corresponding $t=t(s)$, the last display can be written as 
\begin{align}
C_{r,p,q}(x_c \cdot s) &= p \cdot (x_c \cdot s)^p (y_c \cdot t)^{q-p} \sum_{F \in \mathcal{F}(r,p,q)} \quad \prod_{v \in F^\star}
\left( (y_c \cdot t )^{c(v)-1} x_c s \cdot T_{c(v) + 2}(x_c s) \right)\nonumber \\
&=p \cdot (x_c \cdot s)^p (y_c \cdot t)^{q-p} \sum_{F \in \mathcal{F}(r,p,q)} \quad \prod_{v \in F^\star}
\left[ z^{c(v)}  \right]\varphi_t(z)\nonumber \\
\label{eq:gencylinder}
 &=
p \cdot (x_c \cdot s)^p (y_c \cdot t)^{q-p}
\cdot \left[ z^p  \right] \left(\varphi_t^{\{r\}} (z) \right)^q,
\end{align}
where in the last line we used the fact that $\varphi_{t}$ is the generating function of the offspring distribution $\theta_{t}$. The last factor can thus be  interpreted as the probability that a $\theta_{t}$-Galton--Watson process started with $q$ particles has $p$ particles at generation $r$.

\paragraph{Cone.} For $r\geq 0$ and $q\geq 0$, the derivation of the generating series $C_{r,0,q}(x)$ of triangulations of the cone of height $r$ and perimeter $q$ counted by vertices is very similar. The only differences are that there is no bottom root, the forest has maximum height $r-1$ and every vertex of the forest has an associated slot (even vertices of maximum height). It is then straightforward that
\[
C_{r,0,q}(x) = x \sum_{F \in \mathcal{F}(r-1,q)} \quad \prod_{v \in F} x T_{c(v)+2}(x)
\]
where $\mathcal{F}(r-1,q)$ denotes the set of all forests of $q$ trees of maximum height $r-1$ (the first $x$ counts the origin vertex of the cone). Doing the same trick as above yields when $r \geq 1$
\begin{align}
C_{r,0,q}(x_c \cdot s)
&= (x_c \cdot s) (y_c \cdot t)^{q} \sum_{F \in \mathcal{F}(r-1,q)} \quad \prod_{v \in F}
\left[ z^{c(v)}  \right]\varphi_t(z)\nonumber \\
\label{eq:gencone}
C_{r,0,q}(x_c \cdot s) &=
(x_c \cdot s) (y_c \cdot t)^{q}
\left( \left(\varphi_t^{\{r\}} (0) \right)^q - \left(\varphi_t^{\{r-1\}} (0) \right)^q \right).
\end{align}
where the last factor can be interpreted as the probability that a $\theta_{t}$-Galton--Watson process started with $q$ particles dies out exactly at generation $r$. When $r=0$ we obviously have $C_{0,0,0}(x_c \cdot s) = x_{c}  s$.

\paragraph{Caps.} Since $\delta-$caps will be more useful for our purpose, let us deal with them first.
The decomposition of $\delta$-caps presented in Section \ref{sec:caps} easily translates into the following identity for the generating series of $\delta$-caps of perimeter $p \geq 0$ where every vertex \emph{that is not on the boundary} comes with a weight $x$ (this convention will allow to glue $\delta$-caps and cones without counting twice the vertices of the glued boundaries):
\begin{align}
\label{eq:crowngs}
K_p(x) =
\sum_{k \geq 1} T_{p+k+1} (x) \left( xT_2(x) \right)^k.
\end{align}
In particular $K_{0}(x)$ is the generating series of triangulations pointed at distance $1$ from the origin.

For later uses we will need the bivariate generating function of $\delta$-caps counting volume and perimeter. More precisely we introduce the following function
\begin{align}
K(s,z) &:= x_c s \sum_{p\geq 0} K_p(x_c s) \left(y_c t z\right)^p\label{def:K} \\
&= x_c s \sum_{n \geq 2} T_n(x_c s) (y_c t z)^{n-1} \sum_{k=1}^{n-1} \left( \frac{x_c s T_2 (x_c s)}{y_c t z}\right)^k\nonumber \\
& = x_c s y_c t \varphi_t (0) \sum_{n \geq 2} T_n(x_c s) (y_c t z)^{n-2} \cdot \frac{1 - \left(\frac{\varphi_t (0)}{z}\right)^{n-1}}{1 - \frac{\varphi_t (0)}{z}} \nonumber
\end{align}
where we used the identity
\[
\varphi_t(0) = \frac{x_c s T_2(x_c s)}{y_c t}
\]
to get the last line. With the definition \eqref{eq:exprPhit} of $\varphi_t$ this gives
\begin{equation}
\label{eq:Ksz}
K(s,z) = (y_c t)^2 \frac{\varphi_t (0)}{1 - \frac{\varphi_t (0)}{z}} \left(\varphi_t(z) - \frac{\varphi_t(0)}{z} \varphi^{\{2\}}_t(0) \right).
\end{equation}
The case when $s=1$ will be of particular interest:
\[
K(1,z) = y_c^2 \left( 1-2\frac{1-z}{(1+2\sqrt{1-z})(1+\sqrt{1-z})} \right),
\]
where we recognize the generating function $\psi$ of $\nu$ given in equation \eqref{def:nu} inside the parenthesis. Note for future reference that the expectation of $\nu$ is $2$ and that for every $p \geq 0$:
\begin{equation}
\label{eq:nuval}
\nu(p) = \frac{x_c}{y_c^2} \, y_c^p \, K_p(x_c) = 4 \sqrt 3 \, y_c^p \, K_p(x_c).
\end{equation}
When $p \geq 0$ is fixed, we call a (critical) Boltzmann $\delta$-cap with perimeter $p$, a random $\delta$-cap $ \mathsf{Cap}_{\delta}(p)$ with perimeter $p$ whose law is given by 
\begin{equation} \label{eq:BolKip}
\mathbb{P}(\mathsf{Cap}_{\delta}(p) = \Delta) = \frac{x_{c}^{|\Delta|-p}}{K_{p}(x_{c})}.
\end{equation}

\bigskip

The decomposition presented in Section \ref{sec:rhocap}  could also be translated into an expression for the generating series of $\rho$-caps of perimeter $q \geq 1$ counted by total number of vertices, but since we shall not use $\rho$-caps in what follows we leave this exercise to the interested reader.

\subsection{Distributional consequences}
We now use the exact expressions for the generating functions derived above to compute a few distributions on random Boltzmann triangulations. Those computations could be made using either the $\rho$-skeleton decomposition or the $\delta$-skeleton decomposition but to keep the paper short we will concentrate only on the latter which turns out to be a little simpler since the associated trees have no constraints. Let $ \mathbf{T}^{\bullet}$ be a critical pointed triangulation, that is a triangulation of the $1$-gon pointed at an inner vertex $\delta$ whose weight is proportional to $x_{c}^{| \mathfrak{t}|}$. For any $h \geq 1$ we denote by $  \mathbf{T}^{\bullet}_{h}$  the last random triangulation conditioned on the event where the distance between the pointed vertex and the origin of the map is exactly  $h$. 

\subsubsection{The $\delta$-skeleton of $\mathbf{T}^{\bullet}$}
With the above notation, we consider $( \mathsf{Skel}, (M_{v} : v \in \mathsf{Skel} )$ the $\delta$-skeleton decomposition of $ \mathbf{T}^{\bullet}$ as in Section \ref{sec:skelretourne}. Recall the definitions of $\theta$ and $\nu$ given in the Introduction as well as the definition of critical Boltzmann $\delta$-cap $\mathsf{Cap}_{\delta}$ given in equation \eqref{eq:BolKip} and Boltzmann triangulations of the $p$-gon given in equation~\eqref{eq:Boltz}.
\begin{proposition} 
\label{prop:skelfinite}
The random tree $ \mathsf{Skel}$ is a (modified) Galton--Watson tree where all the vertices have offspring distribution $\theta$ except the ancestor whose offspring distribution is $\nu$. Conditionally on $ \mathsf{Skel}$ the slots $M_{v}$ are independent and distributed as critical Boltzmann triangulations of the $(c(v)+2)$-gon except $M_{\rho}$ which is distributed as $\mathsf{Cap}_{\delta}(c(\rho))$.
\end{proposition}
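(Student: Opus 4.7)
The plan is to combine the bijective $\delta$-skeleton decomposition of Section~\ref{sec:skelretourne} with the generating series identities of Section~\ref{sec:combi}; the statement then follows from a (slightly involved) rearrangement of weights, with no further input.

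Fix a plane tree $\tau$ and, for each $v \in \tau$, a triangulation $\Delta_v$ (a triangulation of the $(c(v)+2)$-gon when $v \ne \rho$, and a $\delta$-cap with perimeter $c(\rho)$ when $v = \rho$). By the bijection of Section~\ref{sec:skelretourne}, this data reconstructs a unique rooted pointed triangulation $(\mathfrak{t}, \delta)$, which carries Boltzmann weight $x_c^{|\mathfrak{t}|}$. Partitioning the vertices of $\mathfrak{t}$ among the cap $\Delta_\rho$, the interiors of the slots $(\Delta_v)_{v \ne \rho}$, and the cycle vertices (which are entirely determined by $\tau$) and then redistributing the boundary weights using the identity $\prod_{v \in \tau} \alpha^{c(v)-1} = \alpha^{-1}$ already exploited in \eqref{eq:gencylinder}, the weight factorizes as
\[
x_c^{|\mathfrak{t}|} \;\propto\; y_c^{\,c(\rho)}\, x_c^{|\Delta_\rho|-c(\rho)} \cdot \prod_{v \ne \rho} \bigl(y_c^{\,c(v)-1} x_c\bigr)\, x_c^{|\Delta_v|-(c(v)+2)},
\]
up to a multiplicative constant depending neither on $\tau$ nor on the $\Delta_v$.

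Two consequences are then immediate. First, conditionally on $\tau$, each $\Delta_v$ is independently distributed with weight $x_c^{|\Delta_v| - \text{perimeter}}$, which by \eqref{eq:Boltz} and \eqref{eq:BolKip} is exactly a critical Boltzmann triangulation of the $(c(v)+2)$-gon for $v \ne \rho$ and a critical Boltzmann $\delta$-cap of perimeter $c(\rho)$ for $v = \rho$. Second, summing out each slot yields the marginal weight of $\tau$, proportional to
\[
y_c^{\,c(\rho)} K_{c(\rho)}(x_c) \cdot \prod_{v \ne \rho} y_c^{\,c(v)-1} x_c\, T_{c(v)+2}(x_c).
\]
The specialization $s = t = 1$ of \eqref{eq:exprPhit} gives $\theta(p) = y_c^{\,p-1} x_c\, T_{p+2}(x_c)$, so the non-root factors are precisely $\theta(c(v))$; and \eqref{eq:nuval} identifies the root factor with $\nu(c(\rho))$ up to the constant $x_c/y_c^2 = 4\sqrt{3}$. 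Criticality of $\theta$ together with $\psi(1) = 1$ automatically pin down the normalizing constants, and this exhibits $\tau$ as a modified Galton--Watson tree with root offspring $\nu$ and non-root offspring $\theta$.

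The only genuine difficulty is the vertex bookkeeping in the first paragraph: boundary vertices of slots are shared with cycle vertices and must not be double-counted, and the $x_c, y_c$ exponents have to be rearranged very cleanly so that the products above match, term for term, the generating-series identities of Section~\ref{sec:combi}. Once that accounting is carried out, the two assertions of the proposition drop out directly from the factorization.
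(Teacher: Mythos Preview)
Your proposal is correct and follows essentially the same route as the paper's proof: both start from the Boltzmann weight $x_c^{|\mathfrak{t}|}$, apply the $\delta$-skeleton bijection, and use the identity $\prod_{v\in\tau}\alpha^{c(v)-1}=\alpha^{-1}$ to rearrange the exponents so that the slot factors become $\theta(c(v))$ and the root factor becomes $\nu(c(\rho))$. The only cosmetic difference is that the paper computes the normalizing constant explicitly as $x_c^2 T_1'(x_c)=y_c^2$, whereas you appeal to the fact that both sides are probability measures to fix it; either way the argument is the same.
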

\begin{proof}
Let $(\mathfrak t , \delta)$ be a triangulation of the $1$-gon pointed at an inner vertex. By definition
\[
\mathbb P \left( \mathbf{T}^{\bullet} = (\mathfrak t , \delta) \right) = \frac{x_c^{|\mathfrak t|}}{x_c^2 \, T_1'(x_c)},
\]
where we recall that $T_1$ is the generating series of triangulations counted by inner vertices and $| \mathfrak{t}|$ denotes the total number of vertices of $ \mathfrak{t}$. A basic computation gives $T'_{1}(x_{c})=3$ so that $x_c^2 T_1'(x_c) = y_c^2$. Applying the $\delta$-skeleton decomposition to $(\mathfrak t,\delta)$ we get
\begin{align*}
\mathbb P \left( \mathbf{T}^{\bullet} = (\mathfrak t , \delta) \right) & = \left( \frac{x_c}{y_c^2} K_{c(\rho)}(x_c)\right) \left( \frac{x_c^{|\overline H_1(\mathfrak{t}, \delta)|-|\partial \overline H_1(\mathfrak{t}, \delta)|}}{K_{c(\rho)}(x_c)}\right) \left( \prod_{v \in  \mathsf{Sk} \setminus \rho} x_c^{ \mathrm{inn}(M_v)+1}\right)
\end{align*}
where $ \mathsf{Sk}$ denotes the tree coding the $\delta$-skeleton of $ (\mathfrak{t}, \delta)$ and $ \mathrm{inn}( \mathfrak{t})$ is the number of inner vertices of $ \mathfrak{t}$. The first factor $x_c$ counts the vertex $\delta$. Using the same trick as the one used to obtain equation \eqref{eq:gencylinder}, we get
\begin{align*}
\mathbb P \left( \mathbf{T}^{\bullet} = (\mathfrak t , \delta) \right) & = \left( \frac{x_c}{y_c^2} K_{c(\rho)}(x_c) \, y_c^{c(\rho)}\right) \left( \frac{x_c^{|\overline H_1(\mathfrak{t}, \delta)|-|\partial \overline H_1(\mathfrak{t}, \delta)|}}{K_{c(\rho)}(x_c)}\right) \left( \prod_{v \in  \mathsf{Sk} \setminus \rho} \frac{x_c^{ \mathrm{inn}(M_v)}}{T_{c(v)+2}(x_c)}\right) \left( \prod_{v \in \tau \setminus \rho} \theta(c(v))\right).
\end{align*}
This finishes the proof since the first parenthesis is just $\nu(c(\rho))$.
\end{proof}

As a simple consequence of this proposition, we can compute the law of the distance between the origin and the distinguished point in $ \mathbf{T}^{\bullet}$, see \cites{BDFG03,DF05}, \cite[Proposition 2.4]{CD06} as well as \cite[Section 6.1]{BG12} for analogous results (some of them in the case of quadrangulations) derived with bijective methods

\begin{corollary} Denote $H$ the distance between the origin and the distinguished point in $ \mathbf{T}^{\bullet}$. Then $H$ has the following distribution
$$ \mathbb{P}(H=k) = \frac{4}{k(k+1)(k+2)}, \quad \mbox{ for } k \geq 1. $$
\end{corollary}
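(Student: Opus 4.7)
The plan is to reduce the computation of the law of $H$ to the height of the tree $\mathsf{Skel}$ via Proposition \ref{prop:skelfinite}, and then exploit the fact that the relevant iterates of $\varphi$ (at $t=1$) admit a very clean closed form.

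First I would observe that under the $\delta$-skeleton bijection the height of $\mathsf{Skel}$ determines $H$. Indeed, on the event $\{H=h\}$ with $h\geq 1$, the co-horohull $\overline{\mathfrak{C}}_1(\mathfrak t,\delta)$ is a triangulation of the cone of height $h-1$, whose skeleton forest has maximum height $h-2$; adding the ancestor $\rho$ of $\mathsf{Skel}$ on top, $\mathsf{Skel}$ has height exactly $h-1$ (the case $h=1$ corresponding to the degenerate single-vertex tree). Hence
$$\PP(H\leq k)=\PP(\mathrm{height}(\mathsf{Skel})\leq k-1),\qquad k\geq 1.$$
By Proposition \ref{prop:skelfinite}, $\mathsf{Skel}$ is a modified GW tree in which $\rho$ has offspring law $\nu$ and every other vertex has offspring law $\theta$. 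Writing $q_r:=\PP(\mathrm{height}\leq r)$ for a $\theta$-GW tree, the standard one-step argument gives $q_r=\varphi^{\{r+1\}}(0)$, and conditioning on the ancestor's progeny yields
$$\PP(H\leq k)=\psi\!\left(\varphi^{\{k-1\}}(0)\right),\qquad k\geq 1.$$

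Next I would compute $\varphi^{\{r\}}(0)$. The explicit iteration formula \eqref{eq:iterPhit} degenerates at $t=1$, so I would rather solve the recursion directly. With the substitution $u=\sqrt{1-z}$, the generating function \eqref{eq:offspring} reads $\sqrt{1-\varphi(z)}=u/(u+1)$, i.e.\ $\varphi$ corresponds to the Möbius map $u\mapsto u/(u+1)$. Its $r$-th iterate is $u\mapsto u/(1+ru)$, so
$$\sqrt{1-\varphi^{\{r\}}(z)}=\frac{\sqrt{1-z}}{1+r\sqrt{1-z}},\qquad \varphi^{\{r\}}(0)=1-\frac{1}{(r+1)^2}.$$
(Equivalently, one obtains this by a careful $t\to1$ limit of the alternate form of $\varphi_t^{\{r\}}$ displayed after \eqref{eq:iterPhit}.)

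Finally I would substitute in \eqref{def:nu}. A direct computation gives
$$\psi\!\left(1-\tfrac{1}{(r+1)^2}\right)=1-\frac{2/(r+1)^2}{\bigl(1+\tfrac{2}{r+1}\bigr)\bigl(1+\tfrac{1}{r+1}\bigr)}=1-\frac{2}{(r+2)(r+3)}.$$
Setting $r=k-1$ yields $\PP(H\leq k)=1-\frac{2}{(k+1)(k+2)}$ for $k\geq 1$, and telescoping
$$\PP(H=k)=\frac{2}{k(k+1)}-\frac{2}{(k+1)(k+2)}=\frac{4}{k(k+1)(k+2)}.$$
There is no real obstacle here: once Proposition \ref{prop:skelfinite} is in hand the identification $H=\mathrm{height}(\mathsf{Skel})+1$ is immediate, and the calculation boils down to iterating a Möbius transformation. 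The only mildly delicate point is noticing that \eqref{eq:iterPhit} is degenerate at the critical value $t=1$ and instead reducing $\varphi$ to a linear fractional transformation via the substitution $u=\sqrt{1-z}$.
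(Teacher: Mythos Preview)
Your proof is correct and follows essentially the same route as the paper: identify $H$ with one plus the height of $\mathsf{Skel}$, use Proposition \ref{prop:skelfinite} to write $\PP(H\leq k)=\psi\bigl(\varphi^{\{k-1\}}(0)\bigr)$, evaluate $\varphi^{\{r\}}(0)=1-(r+1)^{-2}$, and telescope. The one place you do something slightly different is in obtaining $\varphi^{\{r\}}(0)$: the paper simply quotes this from \eqref{eq:iterPhit} (or its limit as $t\to1$), whereas you give the self-contained M\"obius argument via $u=\sqrt{1-z}$, which is a clean way to bypass the degeneracy of \eqref{eq:iterPhit} at $t=1$.
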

\begin{proof}
Thanks to the law of the $\delta$-skeleton, the height $H$ satisfies
$$ \mathbb{P}( H \leq k+1) = \mathbb{E}[q_{k}^{\partial \overline{H}_{1}}],$$
where $q_{k}$ is the probability that a $\theta$-Galton Watson tree has height smaller than or equal to $k$ and $\partial \overline{H}_{1}$ is the perimeter of the cap of the triangulation. Using \eqref{eq:iterPhit} or \cite[Eq (28)]{CLGfpp} we have $q_{k} = 1-(k+1)^{-2}$. Therefore, plugging this into the generating function of $\nu$ we get:
\[
\mathbb{P}( H \leq k+1) = \psi(q_k) = 1 - 2 \frac{1}{(k+2)(k+3)}
\]
giving the result.
\end{proof}

\subsubsection{Two point function of 2D quantum gravity}
\label{sec:2point}
In the same vein as what we did above, we can use the $\delta$-skeleton decomposition (Section \ref{sec:skelretourne}) to compute the generating series $G_h$ of pointed triangulations $T^{\bullet}_{h}$ counted by vertices. Such series are usually called (discrete) two point functions in the physics literature (see the works of Bouttier, Di Francesco and Guitter \cites{BDFG03,BG12,G17} for several alternate computations of the same function).

For $h \geq 0$, to obtain a triangulation pointed at height $h + 1 \geq 1$ we simply glue together a triangulation of the cone of height $h$ and a cap with the same perimeter $p \geq 0$. This gives the following identity: 
\begin{align}
G_{h+1}(x_c \cdot s) &= \sum_{p\geq 0} C_{h,0,p}(x_c \cdot s) \, K_p(x_c \cdot s)\nonumber \\
& \underset{ \eqref{eq:gencone}}{=}
(x_c \cdot s) \sum_{p\geq 0} K_p(x_c \cdot s)  (y_c \cdot t)^{p} \left(\varphi_t^{\{h\}} (0) \right)^p
- 
(x_c \cdot s) \sum_{p\geq 0} K_p(x_c \cdot s)
 (y_c \cdot t)^{p} \left(\varphi_t^{\{h-1\}} (0) \right)^p
\nonumber \\
&\underset{ \eqref{def:K}}{=} \left( K(s, \varphi_t^{\{h\}}(0)) -  K(s, \varphi_t^{\{h-1\}}(0)) \right)\nonumber \\
& = ( y_c t )^2 \varphi_t(0) \left(
\frac{
\varphi_t^{\{h+1\} }(0)
- \varphi^{\{2\}}_t(0)
}
{
1-
\frac{\varphi_t(0)}{\varphi_t^{\{h\}}(0)}
}
-
\frac{
\varphi_t^{\{h\}}(0)
- \varphi^{\{2\}}_t(0)
}
{
1-
\frac{\varphi_t(0)}{\varphi_t^{\{h-1\}}(0)}
}
\right). \label{eq:gcexact}
\end{align}
The fact that this function is explicit allows for very simple proofs of the Propositions \ref{prop:2point} and \ref{prop:UIPTdisth} stated in the introduction.

\begin{proof}[Proof of Proposition \ref{prop:UIPTdisth}]
A straightforward singularity analysis using  \eqref{eq:gcexact} and \eqref{eq:iterPhit} yields for every $h \geq 1$ and as $s\to1$:
\begin{align*}
G_h(x_c s) &= 
\frac{1}{36 h (h+1) (h+2)}
- \frac{1}{90} \frac{h^4+4h^3+5h^2+2h+3}{h(h+1)(h+2)} \left(1-s\right)\\
& \quad + \frac{\sqrt{6}}{2835} \frac{-1+16h+80h^2+152h^3+138h^4+60h^5+10h^6}{h(h+1)(h+2)} \left( 1-s \right)^{3/2}
+ \mathcal{O} \left( 1-s \right)^{3/2}.
\end{align*}
The reader eager to verify this computation (and others in the paper) can use the companion Maple worksheet available on the authors websites.
By a transfer theorem \cite[Theorem VI.4 p. 393]{FS}, the number of triangulations with $n$ vertices pointed at distance $h$ of the origin vertex satisfies as $n \to \infty $
\[
\left[ x^{n} \right] G_h(x) \sim \frac{3}{4 \sqrt \pi} \frac{\sqrt{6}}{2835} \frac{-1+16h+80h^2+152h^3+138h^4+60h^5+10h^6}{h(h+1)(h+2)}
(x_c)^{-n} n^{-5/2},
\]
\end{proof}

\begin{remark}
Recall that the number of triangulations of the $1$-gon with $n$ inner vertices satisfies $[ x^{n} ] T_1(x) \sim \frac{1}{6 \sqrt{2 \pi}} (x_c)^{-n} n^{-5/2}$ as $n \to \infty$. Therefore, if $ \mathcal{X}_{n,h}$ denotes the number of vertices at distance $h$ from the origin in a uniformly chosen triangulation (of the $1$-gon) with $n$ inner vertices then we have 
\begin{equation}
\label{eq:hUIPT}
\mathbb{E}[ \mathcal{X}_{n,h}] = \frac{\left[ x^{n+1} \right] G_h(x)}{\left[ x^{n} \right] T_1(x)} \xrightarrow[n\to\infty]{} \frac{4}{35} \frac{-1+16h+80h^2+152h^3+138h^4+60h^5+10h^6}{h(h+1)(h+2)}.
\end{equation}
By the local convergence towards the UIPT, we have $ \mathcal{X}_{n,h} \to \mathcal{X}_{\infty,h}$ in distribution where $\mathcal{X}_{\infty,h}$ is the number of vertices at distance $h$ from the origin in the UIPT. Therefore, it suffices to establish that the sequence of random variables $(\mathcal{X}_{n,h})_n$ is uniformly integrable to show that the expected number of vertices at distance $h$ from the origin in the UIPT is given by the limit \eqref{eq:hUIPT}. There are many ways to prove this but we refrain from doing so in this paper.
\end{remark}

\begin{proof}[Proof of Proposition \ref{prop:2point}]
The Laplace transform of the proposition can be expressed as:
$$ \mathbb{E}[ e^{-\lambda h^{-4} |T_{h}^{\bullet}|}] = \frac{G_{h}( x_{c} \cdot e^{-\lambda h^{-4}})}{G_{h}(x_{c})}.$$
Using the exact form of $G_{h}(\cdot)$ given by \eqref{eq:gcexact} and of the iterates of $\varphi_{t}$ in \eqref{eq:iterPhit}, a straightforward singularity analysis provided in the companion Maple worksheet implies our claim.
\end{proof}

\section{Coalescence of geodesics in the UIPT} 
\label{sec:coalescence}
This section is devoted to the study of coalescence of discrete geodesics using the $\rho$-skeleton decomposition of the UIPT \cites{Kr,CLGfpp,M}. Our main result is Theorem \ref{thm:scale} below which generalizes and regroups the statements of \cite[Lemma 3.3]{CMM10} and \cite[Theorem 3.4]{CMM10} in the case of triangulations. As we will see, deducing Theorem \ref{thm:CMM} from it is an easy matter. Recall the definition of hulls of metric balls given in Section \ref{sec:hulls}.

\begin{definition}  \label{def:coalescence} In the UIPT we say that there is geodesic coalescence at scale $r$ if there exists a vertex $v_{r} \in \overline{B}_{2r} \backslash \overline{B}_{r}$ such that any geodesic $\gamma$ starting from $ \partial \overline{B}_{2r}$ and targeting the origin of the root edge must go through $v_{r}$.
\end{definition}

\begin{figure}[!ht]
 \begin{center}
 \includegraphics[width=6cm]{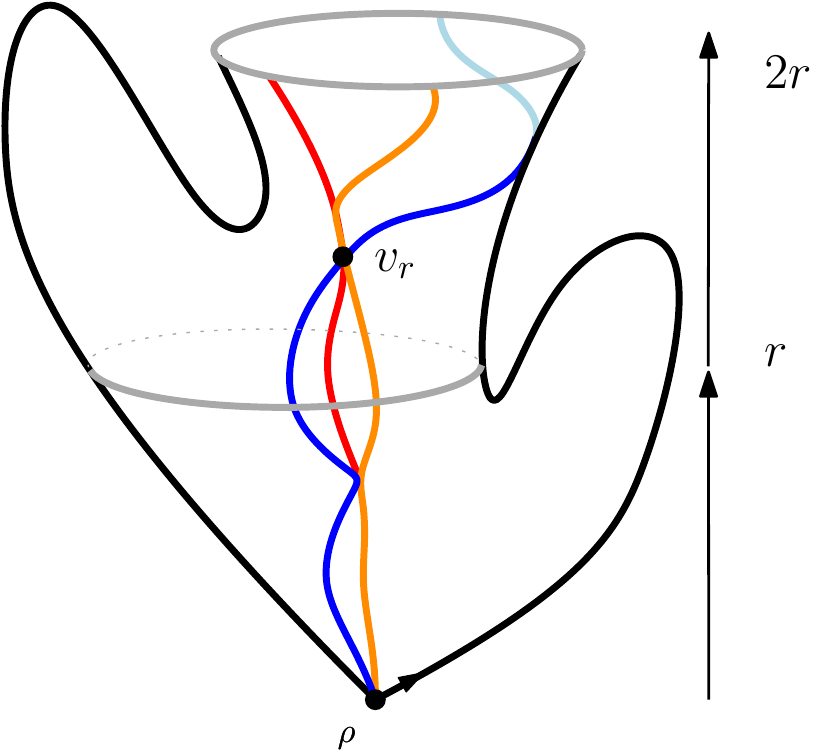}
 \caption{Illustration of the definition of geodesic coalescence at scale $r$. The three geodesics starting at level $2r$ and going down toward the origin must meet at the vertex $v_{r} \in \overline{B}_{2r}\backslash \overline{B}_{r}$. }
 \end{center}
 \end{figure}

\begin{theorem}[Geodesic coalescence]  \label{thm:scale}Almost surely there exists an infinite sequence $1<R_{1}<R_{2}~<~\dots$ such that there is geodesic coalescence at scale $R_{i}$ in $\mathbf{T}_{\infty}$ for all $i \geq 1$.\end{theorem}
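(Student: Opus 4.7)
The plan is to leverage the $\rho$-skeleton decomposition of Section~\ref{sec:skel}: for the UIPT, $\mathsf{Krik}$ is (a sin-tree variant of) a Galton--Watson tree with offspring distribution $\theta$ conditioned to survive, and its width at level $h$ coincides with the cycle length $|\partial \overline{B}_h|$. The strategy has three steps: (i) engineer a ``pinch'' event $E_r$, measurable with respect to the restriction of $\mathsf{Krik}$ (and the slots attached to it) between heights $r$ and $2r$, which forces geodesic coalescence at scale $r$; (ii) establish $\mathbb{P}(E_r) \geq c > 0$ uniformly in $r$ using the $3/2$-stable branching structure; (iii) combine approximate independence of $\mathsf{Krik}$ across dyadic annuli $[2^k, 2^{k+1}]$ with Borel--Cantelli to produce almost surely an infinite random sequence of good scales $(R_i)_i$.

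\paragraph{Designing $E_r$ and forcing coalescence.}
Geodesic coalescence at scale $r$ follows as soon as one exhibits a cut-vertex $v_r \in \partial \overline{B}_h$ for some $h \in (r, 2r)$ disconnecting $\rho$ from $\partial \overline{B}_{2r}$ inside $\overline{B}_{2r}$: every geodesic from $\partial \overline{B}_{2r}$ to $\rho$ is then forced to pass through $v_r$. The cleanest sufficient condition, $|\partial \overline{B}_h|=1$, is too restrictive since $|\partial \overline{B}_h|$ scales like $h^2$, so I would instead take $E_r$ as the conjunction of: (a) a ``low perimeter'' event $\bigl\{\min_{h \in [r, 2r]}|\partial \overline{B}_h| \leq \varepsilon r^2\bigr\}$ for a fixed small $\varepsilon > 0$, whose probability stays bounded below by a CSBP scaling limit; and (b) conditionally on (a) being realized at some height $h^\star \in [r,2r]$, a local pinching event on the next $O(1)$ generations of $\mathsf{Krik}$ and their slots (for instance a chain of down triangles and slots of perimeter $2$ with trivial offspring) that collapses the perimeter down to $1$ within $O(1)$ extra heights and produces the desired cut-vertex. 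Both ingredients are functionals of the skeleton, and since (by the analogue of Proposition~\ref{prop:skelfinite} in the infinite setting) the slots attached to $\mathsf{Krik}$ are independent critical Boltzmann triangulations of $(c(v)+2)$-gons, the local event in (b) has uniformly positive conditional probability given (a).

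\paragraph{Positive probability and infinitely many scales.}
For (a), the explicit iteration identity~\eqref{eq:iterPhit} together with the analysis of~\cite{M} yield the scaling limit of $(r^{-2}|\partial \overline{B}_{\lfloor rt \rfloor}|)_{t \geq 0}$ to a $3/2$-stable CSBP conditioned on survival, which is a positive continuous process and therefore satisfies $\mathbb{P}(\min_{t\in[1,2]} \mathcal{X}_t \leq \varepsilon) \geq c(\varepsilon) > 0$. For (b), the Boltzmann slot description supplies a uniformly positive conditional probability of the local pinch. Hence $\mathbb{P}(E_r) \geq c > 0$ for all $r$ large enough. Applying this at dyadic scales $r = 2^k$ and invoking the branching/Markov property of $\mathsf{Krik}$ together with the conditional independence of the slots, the events $(E_{2^k})_k$ become approximately independent conditionally on the boundary perimeters $(|\partial \overline{B}_{2^k}|)_k$, so a second-moment/Borel--Cantelli argument produces a.s.\ an infinite sequence of scales $(R_i)_i$ on which $E_{R_i}$ -- hence geodesic coalescence -- holds. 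The main obstacle I anticipate is the combinatorial step (b): building an explicit finite-range skeleton configuration that, together with the surrounding Boltzmann triangulations, provably yields an honest cut-vertex of the UIPT and not merely of the skeleton tree. Extracting such a cut-vertex from a macroscopic-length cycle requires a careful analysis of the critical Boltzmann triangulations of small-perimeter polygons, for which the generating function identities of Section~\ref{sec:combi} are the key input.
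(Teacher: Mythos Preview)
Your overall architecture (a skeleton event $E_r$ forcing coalescence, a uniform lower bound on $\mathbb{P}(E_r)$, then Borel--Cantelli along dyadic scales exploiting the branching Markov structure) matches the paper's, but your step (b) is a genuine gap. You propose, conditionally on $|\partial \overline{B}_{h^\star}|\le \varepsilon r^2$, to force ``a chain of down triangles and slots of perimeter $2$ with trivial offspring'' that ``collapses the perimeter down to $1$ within $O(1)$ extra heights''. This asks $\Theta(r^2)$ particles of a $\theta$-Galton--Watson process to reach $1$ in a bounded number of generations; the probability of that is at most $\theta(0)^{\Theta(r^2)}$ and hence goes to $0$ super-polynomially in $r$. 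Consequently $\mathbb{P}(E_r)$ is \emph{not} bounded away from $0$ and the Borel--Cantelli step has nothing to feed on. You correctly flag (b) as the obstacle, but the proposed fix (``extracting a cut-vertex from a macroscopic-length cycle'' via generating-function identities) is not a mechanism: a cycle $\partial \overline{B}_h$ of length $\Theta(r^2)$ simply does not contain a cut-vertex of $\overline{B}_{2r}$.

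The paper sidesteps this completely: it never tries to make the perimeter small. Instead it looks for a height $h\in[3r/2,2r]$ at which a \emph{single slot} has perimeter comparable to the whole cycle ($s\in[r^2,2r^2]$ while $|\partial\overline{B}_{h-1}|\in[3r^2,4r^2]$) --- this is a macroscopic jump of the $3/2$-stable CSBP and occurs with probability bounded below. Condition $C2$ asks the Boltzmann triangulation filling that slot to have no chord from its apex to non-adjacent boundary vertices (uniformly positive probability, Lemma~\ref{lem:boltz}), so no geodesic can cross the slot. Then, rather than a cut-vertex of the ambient map, one tracks the \emph{left-most} geodesic from the right corner of the slot and the \emph{right-most} geodesic from its left corner; every geodesic to $\rho$ is squeezed between them. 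The width between these two extremal geodesics evolves as a $\theta$-branching process with one particle of immigration per generation (the slot just right of the right-most geodesic), and Lemma~\ref{lem:immigration} shows that the complementary width dies out before height $r$ with probability bounded below. This is the missing idea: coalescence is obtained by making two extremal geodesics merge, not by shrinking a cycle to a point.
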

\begin{remark}
As a trivial corollary of the preceding result, we deduce that in the UIPT there exists an infinite sequence of vertices $x_{1},x_{2}, ...$ such that for any $k \geq 1$, there exists $r>0$ such that for any point $y$ at distance at least $r$ from the origin $\rho$ of $\mathbf{T}_{\infty}$, any geodesic $\gamma$ between $\rho$ and $y$ must pass through $x_{1},x_{2}, ... , x_{k}$. This result is (a stronger) analog of the results \cite[Lemma 3.3]{CMM10} and \cite[Theorem 3.4]{CMM10} in the case of triangulations. 
\end{remark}
Let us derive Theorem \ref{thm:CMM} from Theorem \ref{thm:scale}:
\proof[Proof of Theorem \ref{thm:CMM}]  Clearly by invariance under re-rooting of the UIPT and following the proof in \cite{CMM10} it suffices to prove the theorem when $u$ is a neighbor of the origin (or even the target of the root edge). Using Theorem \ref{thm:scale} we can find two scales $R_{1}< 2 R_{1} < R_{2}$ such that there is geodesic coalescence at scales $R_{1}$ and $R_{2}$. Now, let $z$ be a point outside of $\overline{B}_{2R_{2}}$ and consider a geodesic $\gamma : z \to \rho$ and a geodesic $\gamma' : z \to u$. Clearly since $ |\mathrm{d_{gr}}(u,z) - \mathrm{d_{gr}}(\rho,z)| \leq 1$ then $\gamma'$ must always go down towards the origin of $\mathbf{T}_{\infty}$ (i.e.~the distance to the origin must strictly decrease) maybe except at one step (for otherwise going first to $\rho$ and then to $u$ would create a shorter path). Hence, $\gamma'$ coincides with a geodesic towards the origin either on $ \overline{B}_{2R_{2}} \backslash \overline{B}_{R_{2}}$ or on $ \overline{B}_{2R_{1}} \backslash \overline{B}_{R_{1}}$. Consequently $\gamma$ and $\gamma'$ meet either at the vertex $v_{R_{1}}$ or at the vertex $v_{R_{2}}$ with the notation of Definition \ref{def:coalescence}. We deduce in particular that 
$$ \mathrm{d_{gr}}(z, u) \geq \min \big( \mathrm{d_{gr}}(u,v_{R_{1}}) + \mathrm{d_{gr}}(v_{R_{1}},z) ; \mathrm{d_{gr}}(u,v_{R_{2}}) + \mathrm{d_{gr}}(v_{R_{2}},z)\big),$$
 and since the other inequality is obvious we deduce that 
 $$ \mathrm{d_{gr}}(z,u)-\mathrm{d_{gr}}(z,\rho) = \min \big( \mathrm{d_{gr}}(u,v_{R_{1}})- \mathrm{d_{gr}}(\rho,v_{R_{1}}) ; \mathrm{d_{gr}}(u,v_{R_{2}})- \mathrm{d_{gr}}(\rho,v_{R_{2}})\big),$$ for all $z$ outside of the hull of radius $2R_{2}$. This completes the proof of the result. \endproof

The proof of Theorem \ref{thm:scale} will use Krikun's $\rho$-skeleton decomposition. More precisely we identify a geometric event (Figure \ref{fig:geoevent}) happening within the $\rho$-skeleton which forces geodesic coalescence at scale $r$. But before doing that we need to describe the geodesics towards the origin within the $\rho$-skeleton.

\subsection{The $\rho$-skeleton of the UIPT}

In our proof of geodesic coalescence, we will use the description of the $\rho$-skeleton decomposition in the UIPT. More precisely we shall use the fact that the forest of trees describing the skeleton of hulls in $ \mathbf{T}_{\infty}$ is not too different from the law of iid $\theta$-Galton--{Watson} trees (this is very similar to \cite[Proposition 5]{CLGfpp}). We gather in this section the necessary background.\medskip

 Fix $r \geq 1$ and consider the skeleton decomposition of the triangulation of the cylinder $ \overline{B}_{2r} \backslash \overline{B}_{r}$ (we are thus measuring distances from the origin $\rho$). More precisely we will denote by $$ \mathcal{F}_{r} = (\tau_{1}, ... , \tau_{| \partial \overline{B}_{2r}|})$$ the forest of trees of maximum height $r$ describing it where we applied a uniform cyclic permutation (equivalently, the top root edge of $\overline{B}_{2r} \backslash \overline{B}_{r}$ is chosen uniformly at random) and where we forgot the distinguished point at height $r$ in the forest. For $q \geq 1$ let also $ \mathcal{I}(q) = (  \mathfrak{t}_{1}, ... , \mathfrak{t}_{q})$ be a forest of iid $\theta$-Galton--Watson trees cut at height $r$. Then we have 
 \begin{lemma} \label{lem:comparison} For any $ \varepsilon \in (0,1)$, there exists $C_{\varepsilon}>0$ such that for all $r\geq 1$ and $ p,q  \in [ \varepsilon r^{2};  \varepsilon^{-1} r^{2}]$ for any fixed forest $f_{0}$ of $q$  trees with $p$ vertices at maximum height $r$
 $$ \mathbb{P}( \mathcal{F}_{r} = f_{0} \mid | \partial \overline{B}_{2r}| = q) \geq C_{\varepsilon} \cdot \mathbb{P}( \mathcal{I}(q) = f_{0}).$$
 \end{lemma}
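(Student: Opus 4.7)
The first step is standard: by the Boltzmann weighting $x_c^{|\mathfrak t|}$ of the UIPT, the joint probability $\mathbb{P}(\overline{B}_r=\Delta_r,\,\overline{B}_{2r}\setminus \overline{B}_r=\Delta_{\mathrm{ann}})$ is proportional to $x_c^{|\Delta_r|+|\Delta_{\mathrm{ann}}|-p}$ times a function depending only on the top perimeter $q$ of $\Delta_{\mathrm{ann}}$ (which accounts for the complement of $\overline{B}_{2r}$ in $\mathbf{T}_\infty$). Consequently, conditionally on $\overline{B}_r$ and on $L_{2r}:=|\partial \overline{B}_{2r}|=q$, the annulus $\overline{B}_{2r}\setminus\overline{B}_r$ has the law of a critical Boltzmann triangulation of the $(r,p,q)$-cylinder with $p=L_r:=|\partial \overline{B}_r|$.

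\textbf{From the Boltzmann cylinder to a conditioned Galton--Watson forest.} Specialising \eqref{eq:gencylinder} to $s=t=1$ and using the skeleton bijection, a direct computation shows that the coding forest of a Boltzmann $(r,p,q)$-cylinder, once one averages out the top root edge by the uniform cyclic permutation and forgets the marked max-height vertex, has law
$$\mathbb{P}(\mathcal{F}_r=f_0\mid L_r=p,\,L_{2r}=q)\;=\;\frac{\prod_{v\in f_0^\star}\theta(c(v))}{[z^p](\varphi^{\{r\}}(z))^q}\;=\;\frac{\mathbb{P}(\mathcal{I}(q)=f_0)}{\mathbb{P}(|\mathcal{I}(q)|_r=p)},$$
where $|\mathcal{I}(q)|_r$ denotes the number of vertices of $\mathcal{I}(q)$ at generation $r$, and the last equality uses that $\varphi^{\{r\}}$ is the generating function of $|\mathcal{I}(1)|_r$. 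Integrating over the value of $L_r$ yields the key identity
$$\frac{\mathbb{P}(\mathcal{F}_r=f_0\mid L_{2r}=q)}{\mathbb{P}(\mathcal{I}(q)=f_0)}\;=\;\frac{\mathbb{P}(L_r=p\mid L_{2r}=q)}{\mathbb{P}(|\mathcal{I}(q)|_r=p)},$$
reducing the lemma to a uniform lower bound on the right-hand side for $p,q\in[\varepsilon r^2,\varepsilon^{-1}r^2]$.

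\textbf{Uniform local limit estimates; main obstacle.} Both probabilities on the right are of order $r^{-2}$ in the window of interest. Since $\theta$ is critical and in the $3/2$-stable domain of attraction, the explicit closed form \eqref{eq:iterPhit} of $\varphi^{\{r\}}$ is amenable to singularity analysis, and a contour-integral / transfer-theorem extraction of $[z^p](\varphi^{\{r\}})^q$ yields a uniform local limit theorem $r^2\,\mathbb{P}(|\mathcal{I}(q)|_r=p)\to g(p/r^2,q/r^2)$, where $g$ is the transition density of the $3/2$-stable CSBP, continuous and strictly positive on $(0,\infty)^2$. A parallel argument, leveraging that the UIPT perimeter chain $(L_r)_{r\ge 0}$ has explicit transition probabilities expressible in terms of the same iterates \eqref{eq:iterPhit} following Krikun, gives an analogous local limit theorem for $\mathbb{P}(L_r=p\mid L_{2r}=q)$ with a continuous positive limit density $h$. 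The ratio then converges uniformly on $[\varepsilon,\varepsilon^{-1}]^2$ to the continuous positive function $h/g$, which is bounded below by some $C_\varepsilon>0$ on this compact set. The delicate point is precisely the uniformity of these two local limit theorems in the heavy-tailed regime: $\theta$ has infinite variance so the classical Gnedenko framework is unavailable, and one must rely on the algebraic explicitness of \eqref{eq:iterPhit} to push the coefficient asymptotics through with effective error bounds uniformly on the window $p,q\in[\varepsilon r^2,\varepsilon^{-1}r^2]$.
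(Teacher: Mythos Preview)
Your reduction is correct and your key identity
\[
\frac{\mathbb{P}(\mathcal{F}_r=f_0\mid L_{2r}=q)}{\mathbb{P}(\mathcal{I}(q)=f_0)}
=\frac{\mathbb{P}(L_r=p\mid L_{2r}=q)}{\mathbb{P}(|\mathcal{I}(q)|_r=p)}
\]
is valid, so the strategy would go through once the two uniform local limit theorems you describe are established. However, this is a genuinely different and substantially heavier route than the one taken in the paper. The paper does not estimate either the conditional transition $\mathbb{P}(L_r=p\mid L_{2r}=q)$ or the Galton--Watson local probability $\mathbb{P}(|\mathcal{I}(q)|_r=p)$ at all. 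Instead it invokes directly the explicit $h$-transform identity from \cite[Proposition~5]{CLGfpp},
\[
\mathbb{P}(\mathcal{F}_r=f_0\mid L_{2r}=q)
=\frac{\mathbb{P}(L_r=p)}{\mathbb{P}(L_{2r}=q)}\,\frac{h(q)}{h(p)}\,\prod_{v\in f_0^\star}\theta(c(v)),
\qquad h(k)=4^{-k}\binom{2k}{k},
\]
which expresses the same prefactor purely in terms of \emph{one-point} marginals of $L_r$ and the elementary function $h$. Since $h(k)\sim 2/\sqrt{\pi k}$ and $r^2\,\mathbb{P}(L_r=\lfloor xr^2\rfloor)$ has a continuous positive limit by \cite[Remark~3]{CLGfpp}, both ratios are immediately bounded away from $0$ and $\infty$ on the window $p,q\in[\varepsilon r^2,\varepsilon^{-1}r^2]$, and the proof is two lines long.

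In short: what you flag as ``the delicate point'' (uniform heavy-tailed local limit theorems for conditioned transitions) is precisely what the $h$-transform structure of the UIPT perimeter chain lets one bypass entirely. Your argument would work, but it re-derives from scratch, via contour integrals on $\varphi_t^{\{r\}}$, information that is already packaged in the cited formula.
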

 \proof The law of $ \mathcal{F}_{r}$ conditionally on $| \partial \overline{B}_{2r}| = q$ is described in the third display in the proof of \cite[Proposition 5]{CLGfpp} and can be rewritten as follows: For any  fixed forest $f_{0}$ of $q$ trees with $p$ vertices at maximum height $r$ we have 
 \begin{eqnarray} \mathbb{P}( \mathcal{F}_{r} = f_{0} \mid | \partial \overline{B}_{2r}| = q) &=&  \frac{\mathbb{P}( |\partial \overline{B}_{r}|=p)}{ \mathbb{P}(| \partial \overline{B}_{2r}| = q)} \frac{h(q)}{h(p)} \prod_{v \in f_{0}^{\star}} \theta(c(v)), 
\end{eqnarray}
where $h(k)= 4^{-k}{\binom{2k}{k}}$. From \cite[Remark 3]{CLGfpp} and the fact that $ h(x) \sim 2/ \sqrt{\pi x}$ for large $x$ we deduce that when $p/r^{2}$ and $q/r^{2}$ are bounded away from $0$ and $\infty$ then the two ratios in the last display are bounded away from $0$ and infinity as well. Since the product in the last display exactly corresponds to $\mathbb{P}( \mathcal{I}(q) = f_{0})$ we are done. \endproof

\subsection{Left and right most geodesics}

\label{sec:geodesics}
We now describe the left and right most geodesics towards the origin of the UIPT using the $\rho$-skeleton decomposition. \medskip 
\
 
Take $v$ any vertex of the UIPT. It belongs to one of the triangulations with simple boundary, say $M$, filling a slot in the skeleton representation. Therefore, any geodesic path from this vertex to the origin goes first to one of the boundary vertices of $M$ and then proceeds to the origin by going through edges of the skeleton (\emph{i.e.} edges of the down triangles linking two consecutive layers), or eventually by going through edges linking boundary vertices of slots, see Figure \ref{fig:geo}.
In turn, this means that any geodesic path from $v$ to the origin stays on the right of the left-most geodesic from the top vertex of $M$ to the origin and stays on the left of the right-most geodesic from the top vertex of $M$ to the origin.
 
\begin{figure}[ht!]
\begin{center}
\includegraphics[width=1\textwidth]{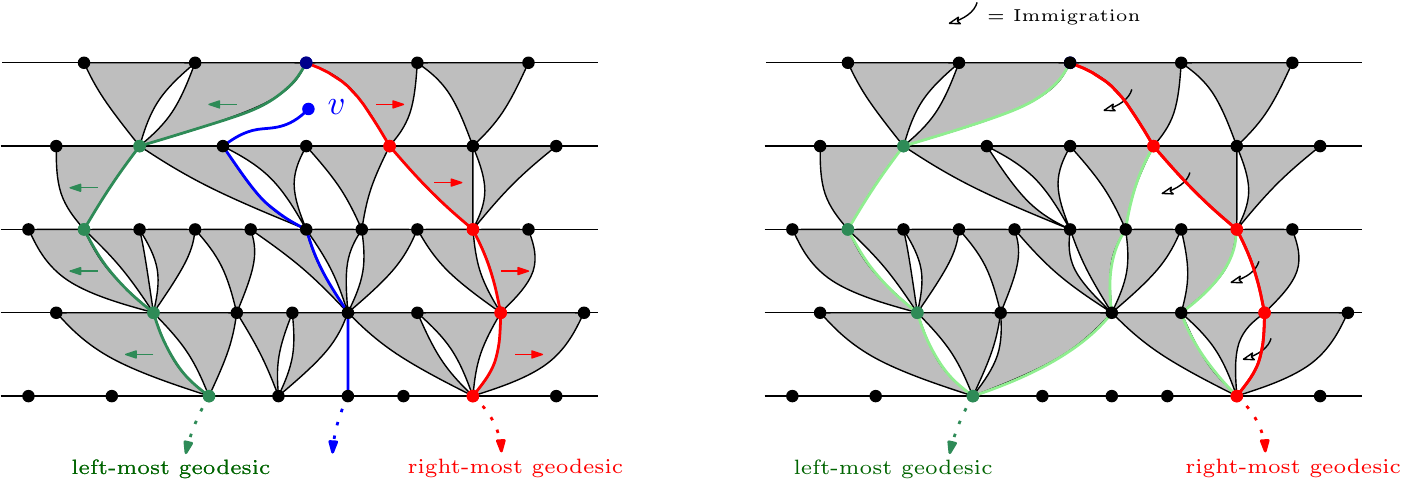}
\caption{\label{fig:geo} Left: A geodesic to the origin (blue), between a left-most geodesic (red) and a right-most geodesic (green). Notice the blue edge on the bottom of the geodesic from $v$ to the origin linking two boundary vertices of a slot. Right: The situation between a left-most geodesics (in green) and a right-most geodesic (in red): at each height the slot immediately on the right of the right-most geodesic allows for immigration in-between the geodesics. }
\end{center}
\end{figure}

Now imagine that we follow two left-most geodesics $\gamma_{1}$ and $\gamma_{2}$ targeting $\rho$ and started from height $r$ in the $\rho$-skeleton of the UIPT. These two geodesics trace the left and right boundaries of the two forests of trees in-between them (there are two forests since we are on a cylinder). In particular, if the minimum height of the two forests is $h <r$ this implies that $\gamma_{1}$ and $\gamma_{2}$ coalesce at height $r-h-1$. This phenomenon, already observed by Krikun \cite[Section 4.3]{Kr} was also instrumental in \cite[Proposition 16]{CLGfpp}: Since the trees in those forests are roughly $\theta$-Galton--Watson trees (see Lemma \ref{lem:comparison}), and in particular critical, they die out which ensures coalescence of geodesics.

Let us now imagine the situation when $\gamma_{1}$ is a left-most and $\gamma_{2}$ a right-most geodesic towards the origin. These two geodesics do not trace the boundaries of two separate forests in the skeleton anymore. More precisely, the trees in-between $\gamma_{1}$ and $\gamma_{2}$ now inherits, at each down step, the descendants of the vertex immediately on the right of $\gamma_{2}$, see Figure \ref{fig:geo} (right). Heuristically speaking  the horizontal length between $\gamma_{1}$ and $\gamma_{2}$ now evolves as a $\theta$-Galton--Watson tree together with an immigration at each generation distributed according to $\theta$. This description would be exact in an infinite model where the finite perimeters of the cylinders do not perturb the law (more precisely the Lower Half Plane Model of \cite{CLGfpp}).

\subsection{Enforcing geodesic coalescence}
\begin{figure}[ht!]
\begin{center}
\includegraphics[width=0.8\textwidth]{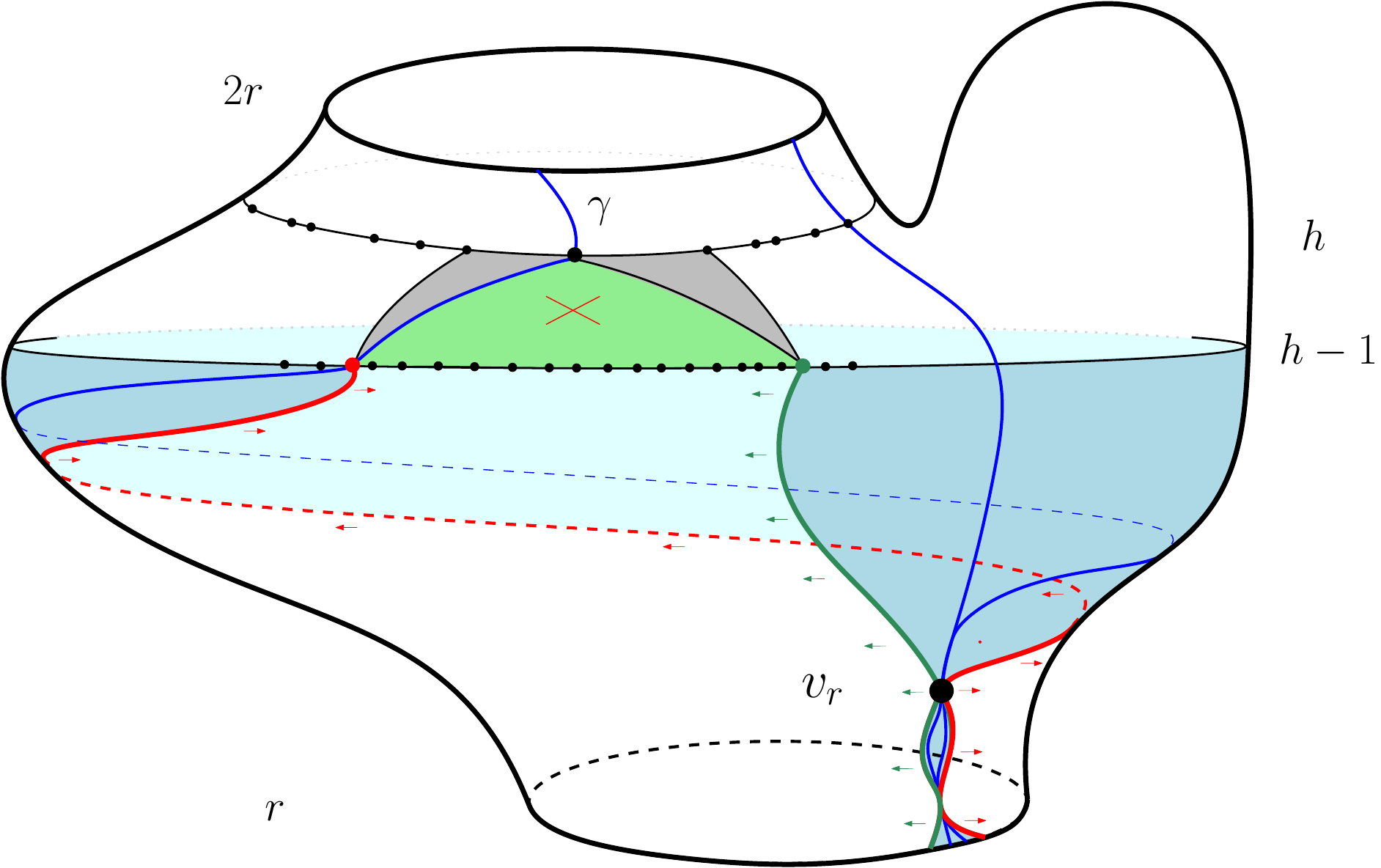}
\caption{\label{fig:geoevent} Illustration of the geometric event $ \mathcal{G}_{r}$ enforcing coalescence at scale $r$. We see the creation of a large slot (in green) whose perimeter is comparable to the total perimeter at level $h \geq 3r/2$. We then enforce that the right-most and left-most geodesic starting from the bottom corners of that slot meet before height $r$. Any geodesics towards the origin coming down from level $2r$ must live in the blue region and in particular must go through $v_{r}$.}
\end{center}
\end{figure}

We now describe the geometric event $ \mathcal{G}_{r}$ which, when happening in the $\rho$-skeleton between height $r$ and $2r$, enforces geodesic coalescence at scale $r$. It is defined by three conditions (see Figure \ref{fig:geoevent}):
 \begin{itemize} 
\item  $C1(r)$: There exists $h \in [3/2 r, 2r]$ such that there is a slot at height $h$ of perimeter $ s \in [r^{2}, 2r^{2}]$ 
and such that $| \partial \overline{B}_{h-1}| \in [3r^{2}, 4r^{2}]$. 
\item $C2(r)$: The critical Boltzmann triangulation filling-in the above slot has no edge between its origin vertex and its boundary vertices except the two neighbors of the origin vertex.
\item $C3(r)$: The right-most (red) geodesic starting from the bottom left of the above slot and the left-most (dark green) geodesic starting from the bottom right of the above slot meet at some vertex $v_{r}$ whose height is larger than $r$.
\end{itemize}

It is easy to see that conditions $C1(r)$ and  $C3(r)$ are conditions about the skeleton $ \mathcal{F}_{r}$ of $ \overline{B}_{2r} \backslash \overline{B}_{r}$ and so by abuse of notion we will also say that a fixed forest $f _{0}$ of height $r$ satisfies  $ C1(r) \cap C3(r)$ if it is the forest describing the skeleton of $\overline{B}_{2r}( \mathfrak{t}) \backslash \overline{B}_{r}( \mathfrak{t})$ where $ \mathfrak{t}$ is a triangulation satisfying $C1(r)$ and $C3(r)$.  The only condition about the slots (and thus about the UIPT) is condition $C2(r)$. However, recall that in the UIPT,  conditionally on the skeleton the slots are filled-in with independent Boltzmann triangulations with the proper perimeter: the following lemma shows that $C2(r)$ is fulfilled with a probability bounded away from $0$ irrespectively of the geometry of the skeleton.
\begin{lemma}\label{lem:boltz} The probability that a critical Boltzmann triangulation of the $p-$gon with $p \geq 2$ has no edge between its origin vertex and its boundary vertices except the two neighbors of the origin is larger than $ \frac{1}{12 \sqrt{3}}$.
\end{lemma}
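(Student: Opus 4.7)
My plan is bijective: I would exhibit an injection $\Phi$ from the set of all type-I triangulations of the $p$-gon into the sub-family satisfying the property of the lemma, which adds exactly one inner vertex. Writing $S_p(x)$ for the generating series of this sub-family counted by inner vertices, such an injection immediately yields the formal power-series inequality $S_p(x) \geq x \cdot T_p(x)$, and evaluation at $x = x_c = 1/(12\sqrt{3})$ then gives
\[
\frac{S_p(x_c)}{T_p(x_c)} \,\geq\, x_c \,=\, \frac{1}{12\sqrt{3}},
\]
which is precisely the asserted lower bound on the critical Boltzmann probability of the event.

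The construction of $\Phi$ I have in mind is a ``shielding'' of the origin. Given a triangulation $\Delta$ of the $p$-gon with boundary cycle $(\rho = x_0, x_1, \ldots, x_{p-1})$ and root edge $(\rho, x_1)$, I would insert a fresh vertex $w$ in the outer face at the corner incident to $\rho$, together with three new edges $(w, x_1)$, $(w, \rho)$ and $(w, x_{p-1})$. This subdivides the old outer face into two new inner triangles $(w, \rho, x_1)$ and $(w, x_{p-1}, \rho)$, plus a new outer $p$-gon bounded by $(w, x_1, \ldots, x_{p-1})$; turning $\rho$ into an inner vertex and rerooting the resulting map at $(w, x_1)$ defines $\Phi(\Delta)$. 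By construction the new origin $w$ has exactly three incident edges: two to its boundary-neighbors $x_1$ and $x_{p-1}$ and a third to the now inner vertex $\rho$. In particular $w$ has no edge to any other boundary vertex, so $\Phi(\Delta)$ lies in the desired sub-family, and exactly one new inner vertex (namely $\rho$) has been created.

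Injectivity of $\Phi$ is immediate: given $\Phi(\Delta)$ one recovers $\Delta$ by deleting $w$ together with its three incident edges and the two adjacent triangles, and rerooting at $(\rho, x_1)$. The only point needing a moment's care is the degenerate case $p = 2$, where $x_{p-1} = x_1$ so that the two added edges $(w, x_1)$ and $(w, x_{p-1})$ become a double edge and the new outer face is a digon; this causes no problem in the type-I setting where loops and multiple edges are allowed. Beyond this bookkeeping I do not anticipate any real obstacle, the argument reducing to $\Phi$ and one line of generating-series accounting.
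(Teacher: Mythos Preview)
Your proposal is correct and is essentially the same argument as the paper's. The paper also exhibits a sub-family of the desired triangulations obtained by gluing two triangles onto an arbitrary triangulation $\mathfrak t'$ of the $p$-gon so as to create a new shielded root vertex of degree~$3$; this is exactly your map~$\Phi$, and the weight computation $x_c T_p(x_c)/T_p(x_c)=x_c$ is your inequality $S_p(x_c)\ge x_c\,T_p(x_c)$.
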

\begin{proof}
Consider a triangulation of the $p$-gon $ \mathfrak{t}$ obtained as the gluing of a triangulation of the $p$-gon $ \mathfrak{t}'$ together with two triangles as depicted in the Figure \ref{fig:needle}. Then clearly such a triangulation $ \mathbf{t}$ has no edge between the origin vertex and the other vertices of the boundary except those neighboring the origin. The total weight under the Boltzmann distribution of such triangulations is $ x_c T_{p}( x_c ) / T_{p}(x_c) = x_c = \frac{1}{12 \sqrt{3}}$.
\begin{figure}[!h]
 \begin{center}
 \includegraphics[width=0.4\linewidth]{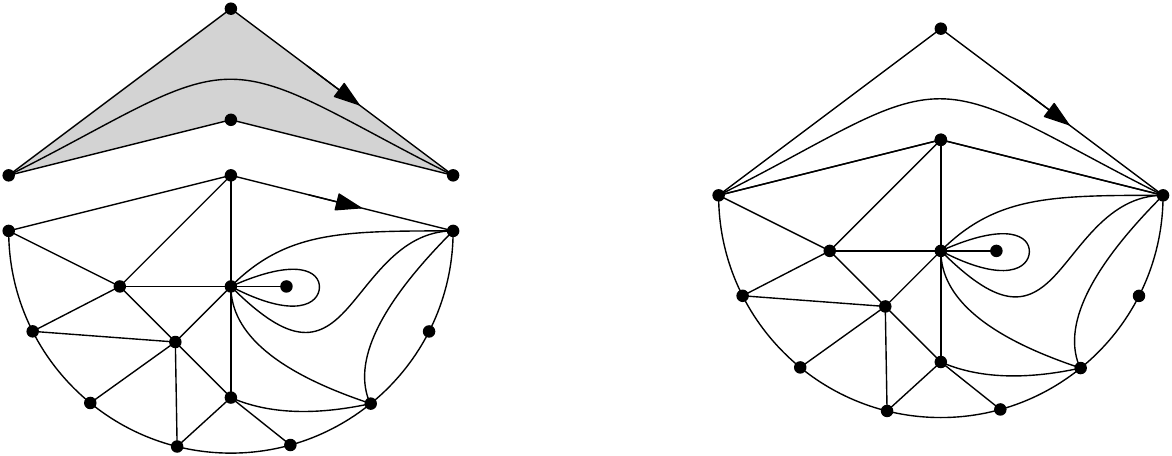}
 \caption{ \label{fig:needle}Illustration of the construction of a triangulation of the $p$-gon having no edge between the origin and non adjacent boundary vertices.}
 \end{center}
 \end{figure}
\end{proof}

\begin{lemma} On the event $  \mathcal{G}_{r}$ there is geodesic coalescence at scale $r$.
\end{lemma}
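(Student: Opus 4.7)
The plan is to identify $v_r$ from condition $C3$ as a topological bottleneck through which every geodesic from $\partial\overline{B}_{2r}$ to $\rho$ must pass, using planar topology combined with the non-crossing properties of extremal geodesics. Let $v$ denote the top vertex of the distinguished slot at height $h\in[3r/2,2r]$ and $\alpha,\beta$ be its bottom-left and bottom-right corners on $\partial_{h-1}$, so the slot's bottom arc reads $u_1=\alpha, u_2, \ldots, u_{s-1}=\beta$. Condition $C3$ asserts that the right-most geodesic $\gamma_L\colon \alpha\to\rho$ and the left-most geodesic $\gamma_R\colon \beta\to\rho$ meet at $v_r$ (of height strictly greater than $r$) and coincide as a single geodesic from $v_r$ down to $\rho$. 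I would form the cycle $P := \gamma_L^{\mathrm{rev}} \cdot (\alpha v) \cdot (v\beta) \cdot \gamma_R$, a simple closed curve in the planar map passing through $v_r$.

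Now fix any geodesic $\gamma$ from $u \in \partial\overline B_{2r}$ to $\rho$, and let $w\in\partial_{h-1}$ be the unique vertex at which $\gamma$ first enters the hull $\overline B_{h-1}$. In the first case where $w$ lies on the slot's bottom arc, the classical sandwich property of extremal geodesics to $\rho$ in a planar map directly forces the continuation of $\gamma$ from $w$ to lie within the region bounded by $\gamma_L$ and $\gamma_R$. Since these two geodesics coalesce at $v_r$, the geodesic $\gamma$ must pass through $v_r$, settling this case.

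The harder case is when $w$ lies outside the slot's bottom arc. Here $C2$ becomes crucial: it prevents any shortcut edge from $v$ to the interior of the slot's bottom arc, so the only way a path can enter the disk enclosed by $P$ from above is through one of the three vertices $v, \alpha, \beta$. By a planarity argument I would show that $\gamma$ must then either (i) touch the cycle $P$ at some vertex $x$ of $\gamma_L$ or $\gamma_R$, in which case the splicing $\gamma|_{[u,x]}\cdot\gamma_L|_{[x,\rho]}$ (or the analogous splicing with $\gamma_R$) yields a path of the same length as $\gamma$ passing through $v_r$, so that $\gamma$ itself can be taken to pass through $v_r$; or (ii) perform a macroscopic detour around the slot along $\partial_{h-1}$, which is ruled out by combining the perimeter bounds $s\geq r^2$ and $|\partial\overline B_{h-1}|\leq 4r^2$ with the short cylinder height $2r-h\leq r/2$, making such a detour strictly longer than the path through $v_r$ and therefore incompatible with $\gamma$ being a geodesic.

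The main obstacle is the shortcut/detour dichotomy in the second case, in particular producing a genuine geodesic through $v_r$ when $\gamma$ enters $\overline B_{h-1}$ away from the slot's bottom arc. I expect the correct formalisation to proceed by planar topology on the cylinder $\overline B_{2r}\setminus\overline B_{h-1}$ and to parallel the coalescence arguments developed for the $\rho$-skeleton in \cite{Kr} and \cite{CLGfpp}, reinterpreted here in terms of the cycle $P$. Once the planarity analysis is in place, both cases merge into the conclusion that any geodesic $\gamma$ from $\partial\overline B_{2r}$ to $\rho$ passes through $v_r$, which is precisely geodesic coalescence at scale $r$ in the sense of Definition~\ref{def:coalescence}.
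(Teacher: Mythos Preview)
Your proposal inverts the roles of the two arcs on $\partial\overline B_{h-1}$ and thereby misidentifies where the sandwich argument applies. The purpose of $C2(r)$ is precisely to exclude your Case~1: a vertex $w$ strictly inside the slot's bottom arc has all its neighbours outside $\overline B_{h-1}$ lying in the slot $M_v$, and since $\gamma$ decreases level by exactly one at each step, the step of $\gamma$ arriving at $w$ would have to be the single edge $(v,w)$, which $C2$ forbids. Hence $w$ always lies on the \emph{complementary} arc (possibly at $\alpha$ or $\beta$). Your ``harder'' Case~2 is therefore the only case, and it is handled directly by the sandwich property: for $w$ on the complementary arc, non-crossing of extremal geodesics traps $\gamma$ between $\gamma_R$ (left-most from $\beta$) and $\gamma_L$ (right-most from $\alpha$), i.e.\ in the blue region of Figure~\ref{fig:geoevent}, which by $C3$ is pinched to the single vertex $v_r$. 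This is the paper's argument in full; no perimeter or detour analysis is needed, and the quantitative bounds in $C1(r)$ play no role in this lemma --- they are used only later, in Proposition~\ref{prop:positive}. Note also that even if $w$ could lie on the slot arc, the natural sandwich there would be between the left-most geodesic from $\alpha$ and the right-most from $\beta$, which are not the geodesics that $C3$ controls.

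Two further issues with the proposal as written: your splicing in (i) only exhibits \emph{some} geodesic through $v_r$, whereas Definition~\ref{def:coalescence} requires that $\gamma$ itself pass through $v_r$; and $\gamma_L,\gamma_R$ need not coincide below $v_r$ (from $v_r$ downward one continues as the right-most and the other as the left-most geodesic), so your curve $P$ is not a simple closed curve as stated.
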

\proof This should be clear on Figure \ref{fig:geoevent}. Let $\gamma$ be a geodesic coming down from level $2r$ and targeting the origin of the map. We claim that from level $h-1$ down to level $r$ the geodesic $\gamma$ must be located in-between the two extremal left-most and right-most geodesics starting from the bottom of the slot selected in $ \mathcal{G}_{r}$. This is true at level $h-1$ by condition $C2(r)$ since the geodesic $\gamma$ cannot ``cross'' the green slot and then propagates by the definition of left-most and right-most geodesic (see last section). Since by $C3(r)$ these two extreme geodesics meet at some point $v_{r}$, the same holds for $\gamma$. \endproof 

\begin{proposition} \label{prop:positive} There exists $c>0$ such that for any large enough $r$ we have $ \mathbb{P}( \mathcal{G}_{r}) > c$. More precisely, for any $ \varepsilon>0$ we can find $ \delta >0$ such that for all $r$ large enough 
$$  \mathbb{P} \bigg( \mathbb{P} \big( \mathcal{G}_{r} \mid \big| \partial \overline{B}_{2r} \big| \big) > \delta \bigg) \geq 1- \varepsilon.$$
\end{proposition}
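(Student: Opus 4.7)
The plan is to condition on a favorable realization of the UIPT's perimeter profile near levels $r$ and $2r$, reduce the problem to an iid $\theta$-Galton--Watson forest via Lemma \ref{lem:comparison}, then handle the three conditions making up $\mathcal{G}_{r}$ separately. As a first step, I would use the scaling limits for the perimeter process established in \cite{M} to choose $\varepsilon \in (0,1)$ and a specific height $h_{\star} = h_{\star}(r) \in [3r/2 , 2r]$ such that the event
$$\mathcal{P}_{r} := \big\{|\partial \overline{B}_{h_{\star}-1}| \in [3r^{2}, 4r^{2}]\big\} \cap \big\{|\partial \overline{B}_{2r}| \in [\varepsilon r^{2}, \varepsilon^{-1} r^{2}]\big\}$$
satisfies $\mathbb{P}(\mathcal{P}_{r}) \geq 1 - \varepsilon_{0}$ for any prescribed $\varepsilon_{0} > 0$ and $r$ large. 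Conditioning on the exact value $|\partial \overline{B}_{2r}| = q \in [\varepsilon r^{2}, \varepsilon^{-1} r^{2}]$ and invoking Lemma \ref{lem:comparison}, any event $A$ measurable with respect to the skeleton forest $\mathcal{F}_{r}$ and supported on forests whose max-height population also lies in $[\varepsilon r^{2}, \varepsilon^{-1} r^{2}]$ satisfies $\mathbb{P}(\mathcal{F}_{r} \in A \mid |\partial \overline{B}_{2r}| = q) \geq C_{\varepsilon}\, \mathbb{P}(\mathcal{I}(q) \in A)$. It thus suffices to verify $C1(r)$ and $C3(r)$ in the iid forest model, and to treat $C2(r)$ via the independent Boltzmann filling of the slot.

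For $C1(r)$, I would exploit the fact that $\theta$ lies in the domain of attraction of the totally asymmetric $3/2$-stable law, so its tail satisfies $\theta([r^{2}, 2r^{2}]) \asymp r^{-3}$. By the $3/2$-stable CSBP scaling limit, the total number of vertices of $\mathcal{I}(q)$ at tree heights corresponding to UIPT levels in $[3r/2, 2r]$ is of order $r^{3}$ with probability bounded below. A Poisson-type computation (using independence of offspring at a fixed generation together with the heavy-tail estimate) then yields the existence, with probability bounded away from $0$, of a vertex with $c(v) \in [r^{2}, 2r^{2}]$ at one of these heights. Combined with the perimeter constraint built into $\mathcal{P}_{r}$, this produces $C1(r)$ with positive probability.

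For $C3(r)$, given a slot of perimeter $s \sim r^{2}$ at height $h_{\star}$, the two extremal geodesics emanating from its bottom corners enclose a ``corridor'' of vertices whose width at successive levels evolves, by the description of Section \ref{sec:geodesics}, as a $\theta$-Galton--Watson process with $\theta$-distributed one-sided immigration, initialized at $s - 1 \sim r^{2}$. Rescaling by $r^{2}$ in space and $r$ in time, this process converges to a $3/2$-stable continuous-state branching process with immigration, whose hitting time of $0$ has positive mass on any compact subinterval; hence with probability bounded below, the corridor dies out within the available $h_{\star} - r \leq r/2$ steps, producing the coalescence vertex $v_{r}$ at UIPT level $\geq r$. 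Finally, $C2(r)$ is handled by Lemma \ref{lem:boltz}: conditionally on the skeleton, the slot is filled by an independent critical Boltzmann triangulation of the $s$-gon, which has no edge from its origin to non-adjacent boundary vertices with probability at least $1/(12 \sqrt{3})$. Multiplying the three lower bounds on the event $\mathcal{P}_{r}$ yields $\mathbb{P}(\mathcal{G}_{r} \mid |\partial \overline{B}_{2r}|) \geq \delta > 0$ with probability at least $1-\varepsilon_{0}$, which gives both the unconditional bound $\mathbb{P}(\mathcal{G}_{r}) \geq c$ and the refined conditional statement.

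The main obstacle is $C3(r)$: the corridor is not a pure Galton--Watson forest but carries one-sided immigration, so the classical extinction argument of \cite{CLGfpp} used for two left-most geodesics does not apply directly. To obtain a quantitative lower bound on the probability that the corridor dies within $O(r)$ steps, one must either pass to the $3/2$-stable CSBP-with-immigration scaling limit and invoke continuity of its hitting time of $0$, or else dominate the corridor from above by a pure $\theta$-GW forest and rely on its finite-time extinction; the latter seems most robust and is likely the approach carried out in the sequel.
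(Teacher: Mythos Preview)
Your overall architecture matches the paper's: reduce to the iid $\theta$-forest via Lemma~\ref{lem:comparison}, handle $C2(r)$ by Lemma~\ref{lem:boltz}, and treat $C1(r)$ via the $3/2$-stable CSBP scaling limit. For $C1(r)$ the paper argues slightly differently (full topological support of the CSBP trajectories rather than a Poisson computation on heavy tails), but your route is workable.

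The genuine gap is in $C3(r)$, and neither of the two approaches you sketch closes it. First, the scaling-limit route: rescaling the corridor width by $r^{-2}$ and time by $r^{-1}$, the one-particle-per-generation immigration contributes $O(r^{-1})$ and vanishes, so the limit is a \emph{pure} $3/2$-stable CSBP, not a CSBP with immigration (and indeed a CSBP with nontrivial immigration never hits $0$, so the sentence about ``hitting time of $0$ having positive mass'' would be false as stated). But convergence of the rescaled process only tells you that the discrete corridor becomes $o(r^{2})$; it does not tell you that it reaches $0$ exactly, which is what coalescence requires. Second, the ``dominate from above by a pure $\theta$-GW'' idea goes the wrong way: immigration makes the process larger, so any natural coupling bounds the corridor from \emph{below} by a pure GW, not from above. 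Dominating by the full perimeter process $X=Y+Z$ (which is pure GW) is no help either, since you simultaneously need $X$ to stay of order $r^{2}$ for the comparison lemma to apply.

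What the paper actually does (Lemma~\ref{lem:immigration}) is the decomposition you are circling around but do not state: write the corridor as $Z = Z^{\star} + \mathcal{E}$, where $Z^{\star}$ is a pure $\theta$-GW started from $z_{0}\asymp r^{2}$ and $\mathcal{E}_{n}$ is the total progeny at time $n$ of the immigrant particles. Then compute \emph{exactly}: $\mathbb{P}(Z^{\star}_{n_{0}}=0) = (1-(n_{0}+1)^{-2})^{z_{0}}$, which is bounded below since $z_{0}/n_{0}^{2}$ is bounded; and $\mathbb{P}(\mathcal{E}_{n_{0}}=0) = \prod_{k=1}^{n_{0}}(1-(k+1)^{-2}) \geq \prod_{k\geq 1}(1-(k+1)^{-2}) = 1/2$, using the explicit formula $\varphi^{\{k\}}(0)=1-(k+1)^{-2}$. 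These two events are independent of each other and of the event that the complementary width $Y$ stays positive and ends in the right window (handled by the CSBP limit plus the martingale bound $\sup_{i}\mathcal{E}_{i}=o(r^{2})$). This exact-computation step is the missing idea; once you supply it, your outline becomes the paper's proof.
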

\proof Clearly by Lemma \ref{lem:boltz} it is sufficient to take care of conditions $C1(r)$ and $C3(r)$ which are conditions on the forest $ \mathcal{F}_{r}$ describing  the skeleton of $ \overline{B}_{2r} \backslash \overline{B}_{r}$. Fix $ \varepsilon>0$. Using \cite[Remark 3]{CLGfpp} we can find $A>2$ large enough so that $ \mathbb{P}(|\partial \overline{B}_{2r}| \in [A^{-1}r^{2},Ar^{2}]) \geq 1 - \varepsilon$. We will show that we can find $\delta >0$ such that if $q \in  [A^{-1}r^{2},Ar^{2}]$ then  $\mathbb{P} \big( C1(r) \cap C3(r) ~\big | ~ | \partial \overline{B}_{2r} | = q \big) > \delta$.  
Actually, we will even show that if $C4(r)$ is the event on which $| \partial \overline{B}_{r}| \in [A^{-1}r^{2}, Ar^{2}]$ then 
$$\mathbb{P} \big( C1(r) \cap C3(r)  \cap C4(r) \mid | \partial \overline{B}_{2r} \big| = q \big) > \delta.$$
We added condition $C4(r)$ because thanks to  Lemma \ref{lem:comparison}, in order to prove the above display it suffices to show the similar estimate when we replace the skeleton of $ \overline{B}_{2r} \backslash \overline{B}_{r}$ by a forest of $q$ independent $\theta$-Galton--Watson trees. In order to lighten the prose we will implicitly assume that we have performed this change, i.e.~we suppose in the remaining calculations that the forest coding of $\overline{B}_{2r} \backslash \overline{B}_{r}$ has the same law as $ \mathcal{I}(q)$. By abuse of notation we still use the notation $B_r,|\partial B_r|$ ... but the reader should keep in mind that everything we compute only depends on the underlying forest.

Let us first see condition $C1(r)$: The process $(| \partial \overline{B}_{2r-n}|)_{n \geq 0}$ is now a $\theta$-Galton--Watson process started from $q$ particles that we let evolve during $ \approx r$ steps. Since $ q \in [A^{-1}r^2,Ar^2]$ we are exactly within the scaling window to get convergence towards the stable-$3/2$ continuous state branching process (see \cite{DLG02}). The fact that $C1(r)$ happens with a uniform positive probability follows from the last convergence  together with the fact that the topological support of the trajectories of  the $3/2$-stable continuous state branching process is the set of all c\`adl\`ag functions tending to $0$ at infinity.

We now turn to condition $C3(r)$: Suppose that $C1(r)$ happens and pick the largest height  $h \geq 3r/2$ at which  a slot of perimeter $s \in [r^2,2r^2]$ occurs and such that $|\overline{B}_{h-1}| \in  [3r^{2}, 4r^{2}]$. We now start the left and right most geodesics $\gamma_{ \ell}$ and $\gamma_{r}$ from the two extreme lower points of the selected slot. By the Markov property (we are going downward in height) and from the discussion in Section \ref{sec:geodesics} we claim that the horizontal width between the left-most and the right-most geodesic (as we go down) evolves as a $\theta$-Galton--Watson process with immigration $\theta$. More precisely, we introduce the process $$(X_{n}, Y_{n}, Z_{n})_{0 \leq n \leq h-1-r}$$ where $X_{n}$ is the total number of edges in $| \partial \overline{B}_{h-1-n}|$, whereas $Y_{n}$ is the number of edges in-between $\gamma_{r}$ and $\gamma_{\ell}$ in cyclic order and $Z_{n}$ those in-between $\gamma_{\ell}$ and $\gamma_{r}$ (the blue region in Figure \ref{fig:geoevent}) at level $h-1-n$.  Since we supposed that the underlying forest has law $ \mathcal{I}(q)$ this is a Markov process whose evolution is prescribed as follows 
$$ Y_{0} = s-2 \quad \mbox{ and } \quad  Z_{0} = | \partial \overline{B}_{h-1}|-(s-2),$$
$$
X_{n}=Y_{n}+Z_{n} \qquad \mbox{ for } 0 \leq n \leq h-1-r,$$
and for $0 \leq n \leq h-2-r$, as long as $Y_{n}>0$ we have 

$$ Y_{n+1} = \sum_{i=1}^{Y_{n}-1} \xi^{(n)}_{i} \quad \mbox{ and } \quad 
Z_{n+1} = \sum_{i=1}^{Z_{n}+1} \tilde{\xi}^{(n)}_{i},$$
where $\tilde{\xi}^{(n)}_{i}, \xi^{{(n)}}_{i}$ are i.i.d.~random variables of law $\theta$. Notice that each step, one particle (and thus its whole descendance after that step) emigrates from the population $Y$ to the the population $Z$. The next lemma shows that condition $C3(r) \cap C4(r)$ has a positive probability to happen conditionally on $C1(r)$ finishing the proof of the proposition.  
\begin{lemma} \label{lem:immigration} There exists a constant $c \in (0,1)$ such that for any $r \geq 1$, any $y_{0},z_{0} \in [r^{2}/4, 4r^{2}]$ and $n_{0} \in [r/4, 4r]$, if we consider the above process $(X_{n},Y_{n},Z_{n})$ started from $Y_{0}=y_{0}$ and $Z_{0}=z_{0}$, then  we have
\[
\mathbb P\left( Y_{i}>0, \forall 0 \leq i \leq r_{0} \mbox{ and } Z_{n_{0}} = 0 \mbox{ and } Y_{n_{0}}  \in [r^{2}, 2 r^{2}] \right) \geq c.
\]
\end{lemma}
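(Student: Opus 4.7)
The processes $Y$ and $Z$ are independent (driven by disjoint families of i.i.d.\ variables $\xi$ and $\tilde\xi$), so it suffices to bound from below, uniformly in the parameters, the probabilities of $E_Z = \{Z_{n_0}=0\}$ and $E_Y = \{Y_i > 0,\ \forall 0 \le i \le n_0,\ Y_{n_0} \in [r^2, 2r^2]\}$ separately.

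For $E_Z$, I would decompose $Z_{n_0}$ into the descendants at generation $n_0$ of the $z_0$ initial particles and the descendants at generation $n_0-s$ of the immigrant arriving at time $s$, for each $s \in \{0, \dots, n_0-1\}$. These $z_0 + n_0$ populations are independent $\theta$-Galton--Watson trees, so
\[
\mathbb P(Z_{n_0}=0) = \mathbb P(T \le n_0)^{z_0} \prod_{k=1}^{n_0} \mathbb P(T \le k),
\]
where $T$ is the extinction time of a $\theta$-GW tree. Specialising the iterate formula \eqref{eq:iterPhit} at $t=1$ gives $\varphi^{\{k\}}(0) = 1 - (k+1)^{-2}$, whence $\mathbb P(T>k) = (k+1)^{-2}$. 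A telescoping computation yields $\prod_{k=1}^{n_0}(1-(k+1)^{-2}) = (n_0+2)/(2(n_0+1)) \ge 1/2$, and using $1-x \ge e^{-2x}$ for $x \in [0,1/2]$ gives $\mathbb P(T \le n_0)^{z_0} \ge \exp(-2z_0/(n_0+1)^2)$. Both factors are uniformly positive on $(z_0, n_0) \in [r^2/4, 4r^2] \times [r/4, 4r]$, so $\mathbb P(E_Z) \ge c_Z > 0$.

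For $E_Y$, observe that the emigration step forces $Y$ to be absorbed at $0$ as soon as it reaches $0$ or $1$, so $\{Y_{n_0} \ge r^2\}$ automatically entails $Y_i \ge 2 > 0$ for all $i < n_0$, and $E_Y$ reduces to $\{Y_{n_0} \in [r^2, 2r^2]\}$. I would then write $Y_n = W_n - E_n$, where $W$ is a standard $\theta$-GW process started from $y_0$ and $E_n$ is the total number of descendants at time $n$ of the $n$ emigrants (each launching an independent $\theta$-GW tree). Since $\mathbb E[E_n] = n = O(r)$, Markov's inequality gives $r^{-2} E_{[rt]} \to 0$ in probability, and the classical invariance principle \cite{DLG02} for critical GW processes in the domain of attraction of the $3/2$-stable law provides Skorokhod convergence of $r^{-2} Y_{[rt]}$ to a $3/2$-stable CSBP $\bar Y$ started from $\lim y_0/r^2 \in [1/4, 4]$. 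As $\bar Y_{t_0}$ admits a continuous density on $(0,\infty)$ (read off its explicit Laplace transform), $\mathbb P(\bar Y_{t_0} \in [1,2])$ is continuous and strictly positive in $(y_0/r^2, t_0)$ on the compact set $[1/4,4]^2$, giving a uniform lower bound $c_Y > 0$ for $r$ large; the finitely many small-$r$ values are handled by direct inspection.

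The main obstacle is securing the uniform-in-initial-condition invariance principle for $Y$ in the presence of the emigration perturbation. The $W-E$ decomposition resolves this: $r^{-2}W_{[rt]}$ converges to $\bar Y$ by the classical CSBP limit, while $r^{-2}E_{[rt]}$ is negligible in probability, so the convergence transfers to $r^{-2}Y_{[rt]}$. Multiplying the two independent lower bounds then yields the claimed constant $c = c_Y c_Z \in (0, 1)$.
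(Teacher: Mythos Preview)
Your approach is essentially the paper's: factor into the independent events for $Y$ and $Z$, handle $Z$ by the explicit extinction probability $\varphi^{\{k\}}(0)=1-(k+1)^{-2}$, and handle $Y$ via the decomposition $Y=W-E$ into a standard $\theta$-GW process minus emigrant families, invoking the CSBP limit. Your observation that $Y$ and $Z$ are driven by disjoint i.i.d.\ arrays, hence independent, is a useful clarification (the paper's phrasing ``$Z=Z^\star+\mathcal E$'' with the \emph{same} $\mathcal E$ as for $Y$ is slightly misleading on this point).

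There is, however, one genuine loose end in your $E_Y$ argument. Your reduction ``$\{Y_{n_0}\ge r^2\}$ forces $Y_i>0$ for all $i$'' is correct for the \emph{true} process $Y$, but the identity $Y_n=W_n-E_n$ is only valid on that very event: once $Y$ hits $0$ there is no particle to emigrate, and either $W$ ceases to be a GW process or (if one uses phantom emigrants) $W-E$ no longer coincides with $Y$. Hence the Slutsky step, which gives $r^{-2}(W_{n_0}-E_{n_0})\to \bar Y_{t_0}$, does not by itself yield a lower bound on $\mathbb P(Y_{n_0}\in[r^2,2r^2])$; you still need to control $\mathbb P(\exists\,i\le n_0:\ W_i-E_i\le 0)$. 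The paper closes exactly this gap by intersecting with the event $\{\inf_{i\le n_0}W_i\ge r^2/8\}$ (positive probability by the functional CSBP limit) \emph{and} $\{\sup_{i\le n_0}E_i<r^2/8\}$, the latter via Doob's maximal inequality applied to the martingale $(E_i-i)$ rather than your pointwise Markov bound on $E_{n_0}$. With that supremum control in place the coupling is valid throughout and your argument goes through.
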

\begin{proof} Clearly as long as $Y>0$, the sum $X$ evolves as a $\theta$-branching process. As already mentioned, the processes $Y$ and $Z$ both evolve as $\theta$-branching processes except that at each time step, a particle of the population $Y$ emigrates towards population $Z$.

We consider first the families descending from the emigrated particles from $Y$ at time $0,1,2,...$, see Figure \ref{fig:immigrationbis}. If $\zeta^{(i)}_{n}$ denotes the number of descendants at time $n \geq i$ of the particle emigrated from $Y$ at time $i$ then $(\zeta^{(i)}_{i+k})_{k \geq 0}$ are independent $\theta$-branching processes\footnote{to be precise, if $Y$ dies out we imagine that we create a new particle to run $\zeta^{(i)}$}. We claim that these particles exchanged between $Y$ and $Z$ do not affect too much the process at time $n_{0}$, more precisely if we put $$ \mathcal{E}_{n} := \sum_{i=0}^{n-1} \zeta^{(i)}_{n},$$ then there exists $c>0$ such that
 \begin{eqnarray} \label{eq:claims} (i) \ \  r^{-2} \sup_{0 \leq i \leq n_{0}} \mathcal{E}_{i}  \xrightarrow[r\to\infty]{} 0 \quad \mbox{and } \quad (ii)\ \ \mathbb{P}(\mathcal{E}_{n_{0}}=0) > c.  \end{eqnarray}
For the second claim we compute explicitly : $ \mathbb{P}(\zeta^{(i)}_{i+r}=0) = 
\varphi_1^{\{r\}} (0) = 1 - (r+1)^{-2}$ and so 
$$ \mathbb{P}(\mathcal{E}_{n_{0}}=0) = \prod_{r=1}^{n_{0}} \left( 1 - \frac{1}{(r+1)^{2}}\right) \geq \prod_{r=1}^{\infty} \left( 1 - \frac{1}{(r+1)^{2}}\right) >0.$$
For the first claim, notice that since $\theta$ is critical then $(\mathcal{E}_{i}-i)_{i \geq 0}$ is a martingale. We can then apply Doob maximal inequality and get that 
$$ \mathbb{P}( \sup_{0 \leq i \leq n_{0}} \mathcal{E}_{i} >  \varepsilon r^{2}) \leq \mathbb{P}( \sup_{0 \leq i \leq n_{0}} (\mathcal{E}_{i}-i) >  \varepsilon r^{2} - n_{0}) \leq  \frac{ \mathbb{E}[|\mathcal{E}_{n_{0}}-n_{0}|]}{ \varepsilon r^{2} - n_{0}} \leq \frac{2n_{0}}{ \varepsilon r^{2}-n_{0}},$$ and the claim follows since the last ratio goes to $0$ as $r \to \infty$.

\begin{figure}[!h]
 \begin{center}
 \includegraphics[width=0.8\linewidth]{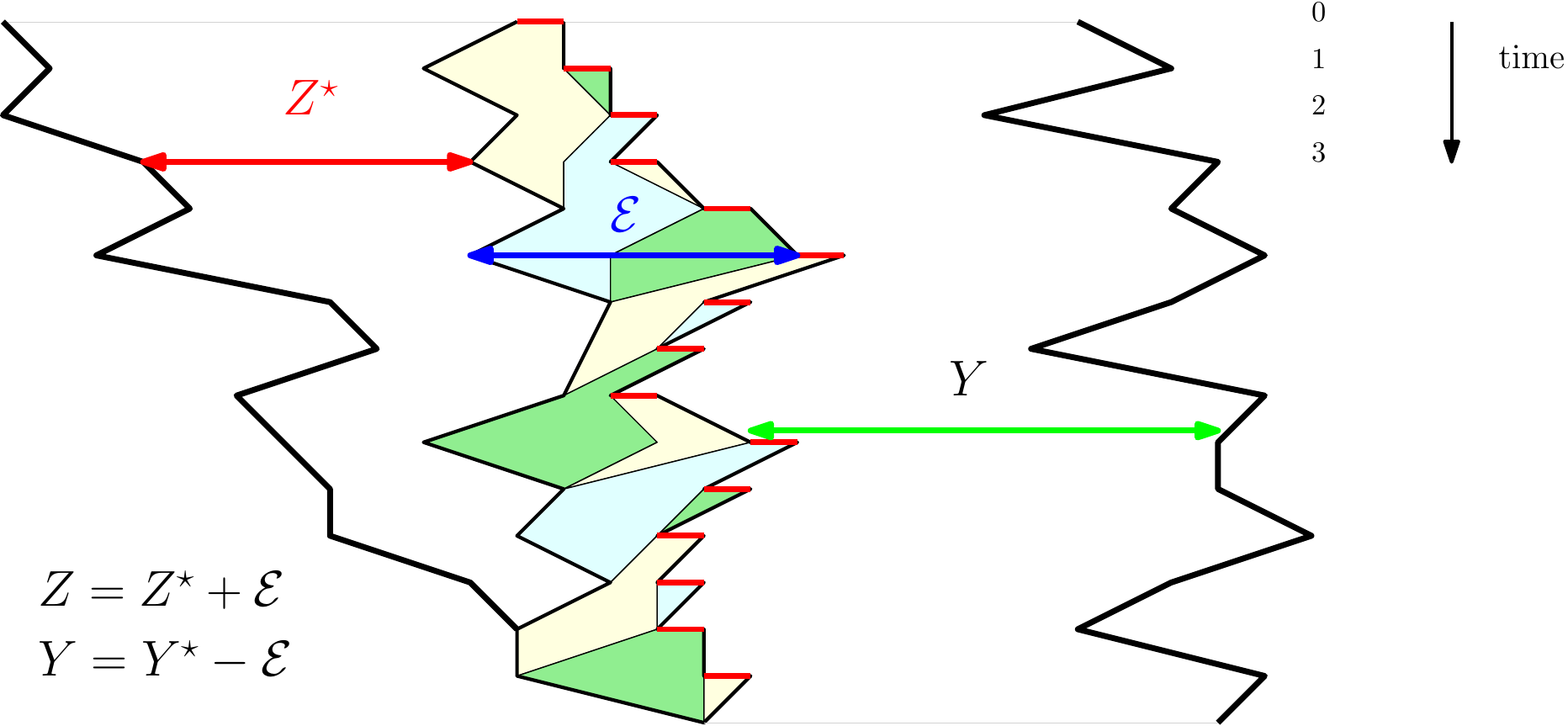}
 \caption{\label{fig:immigrationbis}Illustration of the proof of Lemma \ref{lem:immigration} : at each generation step, one particle (in red) is stolen to $Y$ to be reassigned to $Z$. The families of offspring of these migrants are depicted with different colors. Their aggregation forms the process $\ \mathcal{E}$ which is negligible compared to $Y$ and $Z$.  }
 \end{center}
 \end{figure}

We can now prove the lemma. Imagine first that we start a standard $\theta$-branching process ${Y}^{\star}$ started from $ r^{2}/4 \leq y_{0} \leq 4 r^{2}$ particles and run it for time $ r/4 \leq n_{0} \leq 4r$. The convergence of this branching process towards a $3/2-$stable continuous state branching process ensures that there is a positive probability that the process $Y^\star$ ends up with $ k \in [r^{2}/4,4r^{2}]$ particles at time $n_0$ and furthermore never drops below $r^{2}/8$ before this time. On such an event, imagine that one particle (and its whole descendence) emigrates at each generation to form the process $(\mathcal{E}_{n} : n \geq 0)$ so that in our notation we have $Y = Y^{\star}-\mathcal{E}$.  By \eqref{eq:claims} this is possible i.e.~even after each emigration we still have $Y>0$ and we can furthermore suppose that $ \mathcal{E}_{n_{0}}=0$ with positive probability.

We now start independently a $\theta$-branching process $(Z_{n}^{\star})$ from $z_{0}$. The probability that this process dies out before time $n_{0}$ is $ (1- (n_{0}+1)^{-2})^{z_{0}} > c >0$. On the combination of the above events whose probability is bounded away from $0$ the processes $Y = Y^{\star}-\mathcal{E}$ and $Z = Z^{\star}+ \mathcal{E}$ have the desired law and the event of the lemma is realized. This proves our claim.
\end{proof}

\subsection{Proof of Theorem \ref{thm:scale}}
The expert reader may already be convinced that since $ \mathbb{P}( \mathcal{G}_{r})$ is uniformly bounded from below,  a kind of ``asymptotic independence of the scales in the UIPT'' must show that  $ \mathcal{G}_{i}$ happens infinitely often with probability $1$, thus implying Theorem \ref{thm:scale}. We present here a complete proof based on Jeulin's lemma \cite{Jeu82}. 

Consider the random variables $ \mathcal{X}_{r}= \mathbb{P}( \mathcal{G}_{r} \mid | \partial \overline{B}_{2r}|)$ for $r \geq 1$. Those variables are not independent but we know from Proposition \ref{prop:positive} that for any $ \varepsilon \in (0,1)$ we can find $\delta>0$ such that for all $r \geq 1$ we have
$$ \mathbb{P}( \mathcal{X}_{r} \geq \delta) \geq 1- \varepsilon.$$ We claim that this is sufficient to apply Jeulin's lemma \cite[Proposition 4 c]{Jeu82} and deduce that 
 \begin{eqnarray} \sum_{i=1}^{\infty} \mathcal{X}_{2^{i}} = \sum_{i =1}^{\infty} \mathbb{P}( \mathcal{G}_{2^{i}} \mid | \partial \overline{B}_{2^{i+1}}|) = \infty, \quad \mbox{ almost surely}.   \label{eq:infinity}\end{eqnarray}
 Since the proof is short, let us include it here for the sake of completeness: Suppose by contradiction that the above series is finite, say less than $M>0$, on an event $ \mathcal{A}$ of probability at least $\varepsilon>0$. Using Proposition \ref{prop:positive} we find $\delta >0$ so that $\mathbb{P}( \mathcal{X}_{r} \geq \delta) \geq 1- \varepsilon/2$ for every interger $r$. Noticing that $ \mathbb{P}( \mathcal{A} \cap \{ \mathcal{X}_{r} \geq \delta\}) \geq \varepsilon/2$ we can write 
 $$ M \geq \mathbb{E}\left[ \sum_{i=1}^{\infty} \mathcal{X}_{2^{i}} \mathbf{1}_{ \mathcal{A}}\right] = \sum_{i=1}^{\infty}  \mathbb{E}[ \mathcal{X}_{2^{i}} \mathbf{1}_{ \mathcal{A}} ] \geq  \sum_{i=1}^{\infty}  \delta \mathbb{E}[  \mathbf{1}_{ \mathcal{X}_{2^{i}} \geq \delta} \mathbf{1}_{ \mathcal{A}} ] \geq \sum_{i=1}^{\infty} \delta \frac{ \varepsilon}{2},$$ which is clearly a contradiction. Hence \eqref{eq:infinity} holds.\medskip

Now, imagine that we explore the $\rho$-skeleton of the UIPT starting from $\infty$ and going down to $0$. From the results of Krikun \cites{Kr,CLGfpp} this exploration evolves as an (inhomogeneous) Markov process. When we have discovered the skeleton from $\infty$ up to height $2^{i+1}$, by the Markov property, the probability that $ \mathcal{G}_{2^{i}}$ is satisfied is given by $ \mathbb{P}( \mathcal{G}_{2^{i}} \mid | \partial \overline{B}_{2^{i+1}}|)$. Notice now, still by the Markov property of the exploration, that conditionally on $ | \partial \overline{B}_{2^{i}}|$ the event $  \mathcal{G}_{2^{i-1}}$ is independent of the skeleton above height $2^{i}$ and in particular of $ \mathcal{G}_{2^{i}}$. Iterating the argument we deduce that 
$$ \sum_{i =1}^{\infty}  \mathbf{1}_{\mathcal{G}_{2^{i}}}  \overset{(d)}{=}  \sum_{i=1}^{\infty} \mathrm{Bernoulli}(\mathbb{P}( \mathcal{G}_{2^{i}} \mid | \partial \overline{B}_{2^{i+1}}|)),$$ in distribution where all the Bernoulli random variables are independent. According to the Borel--Cantelli lemma and \eqref{eq:infinity} the last series is infinite almost surely. This proves Theorem \ref{thm:scale}. \qed

\section{The skeleton seen from infinity} \label{sec:skelinfinity}
We now use our results on coalescence of geodesic to construct the skeleton seen from infinity in the UIPT as the limit of the $\delta$-skeletons of  large uniform pointed triangulations and compute the distribution of the perimeter and volume of horohulls in the scaling limit.

\subsection{The $\delta$-skeleton of the UIPT}

Let $( \mathbf{T}_{n}, \delta_{n})$ be a uniform triangulation of the $1-$gon with $n$ inner vertices, pointed at a uniform inner vertex. We denote its origin by $\rho_{n}$ and  for $u \in \mathrm{Vertices}( \mathbf{T}_{n})$ define $\ell_{n}(u) = \mathrm{d_{gr}}(u, \delta_{n})- \mathrm{d_{gr}}(\rho, \delta_{n})$. We perform the $\delta_n$-skeleton decomposition in $(\mathbf{T}_{n},\delta_n)$ as in Section \ref{sec:skelretourne} and denote by  $ \mathsf{Skel}_{n} \subset \mathbf{T}_{n}$ the tree coding this skeleton. We also denote by $(M^{(n)}_{v} : v \in \mathsf{Skel}_{n})$ the triangulations filling-in the slots. 

\begin{theorem} \label{thm:skeletoninfinity} We have the following convergence in distribution for the local topology\footnote{this simply means that for any $r\geq 0$ if we restrict on the ball of radius $r$ around the origin of the map then the corresponding restrictions converge in distribution}
$$ \left( \mathbf{T}_{n}, \ell_{n}, \mathsf{Skel}_{n} , \big( M_{v}^{(n)} : v \in \mathsf{Skel}_{n} \big)\right) \xrightarrow[n\to\infty]{(d)} \left( \mathbf{T}_{\infty}, \ell, \mathsf{Skel}, \big( M_{v} : v \in \mathsf{Skel} \big)\right),$$
where $\ell$ is the horofunction defined in Theorem \ref{thm:CMM} and $(\mathsf{Skel}, ( M_{v} : v \in \mathsf{Skel} )$ is the ``skeleton decomposition of the UIPT seen from infinity''. More precisely, conditionally on $ \mathsf{Skel}$
\begin{itemize}
\item The maps $(M_{v} : v \in \mathsf{Skel})$ are independent,
\item If $v \ne \rho$ then $M_{v}$ is a critical Boltzmann triangulation of the $(c(v)+2)$-gon and $M_{\rho}$ is a critical Boltzmann $\delta$-cap with perimeter $c(\rho)$.
\item The distribution of the random plane tree $ \mathsf{Skel}$ is characterized as follows. For any plane tree $\tau_{0}$ of height $r$ with $p \geq 1$ vertices at generation $r$ we have
$$ \mathbb{P}([ \mathsf{Skel}]_{r} = \tau_{0}) = p \cdot \frac{\nu( c(\rho))}{2} \cdot \prod_{v \in \tau_{0}^{\star}\backslash \rho} \theta( c(v)),$$
where $[ \mathsf{Skel}]_{r}$ is the tree $ \mathsf{Skel}$ truncated at generation $r$.
\end{itemize}
\end{theorem}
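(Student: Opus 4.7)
The plan is to combine three ingredients: the local convergence of uniform triangulations to the UIPT (Angel--Schramm), the existence of the horofunction given by Theorem~\ref{thm:CMM}, and the exact finite-size $\delta$-skeleton description of Proposition~\ref{prop:skelfinite}. First I would establish joint local convergence $(\mathbf{T}_n,\ell_n)\to(\mathbf{T}_\infty,\ell)$ pointed at $\rho_n$. Since $\delta_n$ is a uniform inner vertex and the ball of any fixed radius around $\rho$ in the UIPT has finite volume, $\delta_n$ leaves every bounded set in probability. Combined with $\mathbf{T}_n\to\mathbf{T}_\infty$ locally and Theorem~\ref{thm:CMM} applied inside the limiting map, this yields $\ell_n(u)\to\ell(u)$ for every vertex $u$ of the local limit.

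Next I would deduce the convergence of the $\delta$-skeleton data. The horohull $\overline{H}_r(\mathbf{T}_\infty)$ is a.s.\ finite (it is bounded in the graph metric by a function of $r$ using Theorem~\ref{thm:scale} and standard hull estimates) and is a measurable function of $\ell$ restricted to a suitable neighborhood of $\rho$. The truncated tree $[\mathsf{Skel}_n]_r$ together with the attached slot maps $(M^{(n)}_v : v\in[\mathsf{Skel}_n]_r)$ are in turn measurable functions of $\overline{H}_r(\mathbf{T}_n,\delta_n)$. The first step therefore gives
\[
\Bigl(\mathbf{T}_n,\ell_n,[\mathsf{Skel}_n]_r,\bigl(M^{(n)}_v:v\in[\mathsf{Skel}_n]_r\bigr)\Bigr)\xrightarrow[n\to\infty]{(d)}\Bigl(\mathbf{T}_\infty,\ell,[\mathsf{Skel}]_r,\bigl(M_v:v\in[\mathsf{Skel}]_r\bigr)\Bigr)
\]
for every $r$, which amounts to the convergence stated in the theorem.

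It remains to identify the law of the limit. The measure under which $\mathbf{T}_n$ is sampled coincides with the critical Boltzmann pointed triangulation $\mathbf{T}^\bullet$ conditioned on $|\mathbf{T}^\bullet|=n$, and under $\mathbf{T}^\bullet$ the pair $(\mathsf{Skel},(M_v))$ is described by Proposition~\ref{prop:skelfinite}: $\mathsf{Skel}$ is the modified Galton--Watson tree with ancestor offspring $\nu$ and other offspring $\theta$, decorated by independent critical Boltzmann slots (a Boltzmann $(c(v)+2)$-gon at each non-root vertex, a Boltzmann $\delta$-cap of perimeter $c(\rho)$ at the root). Since Boltzmann slot laws are local in nature, they pass unchanged to the local limit. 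For the tree itself, a standard Kesten-type local limit argument for critical Galton--Watson trees conditioned to be large produces a spine arising as the ancestral line at height $r$ of a uniformly chosen maximum-height vertex: the root offspring becomes the size-biased $\tilde\nu(k)=k\nu(k)/\mathbb{E}[\nu]=k\nu(k)/2$, each non-root spine offspring becomes $\tilde\theta(k)=k\theta(k)$, and the off-spine subtrees remain independent $\theta$-GW trees. Summing the spine-biased weight over the $p$ admissible spine positions at generation $r$ and using $\tilde\nu(k)/k=\nu(k)/2$ and $\tilde\theta(k)/k=\theta(k)$ yields exactly the formula $p\cdot\nu(c(\rho))/2\cdot\prod_{v\in\tau_0^\star\setminus\rho}\theta(c(v))$ of the statement.

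The main obstacle is the last step, where one conditions not on the height or the total progeny of $\mathsf{Skel}$ but on the total volume of the triangulation, which is a random sum of slot sizes. A clean way to bypass this is to first condition on $\{\mathrm{d_{gr}}(\rho,\delta)\geq r+1\}=\{\mathrm{height}(\mathsf{Skel})\geq r\}$, a pure tree event for which the Kesten-type limit is classical, and then to transfer the resulting description back to the volume-conditioned setting by means of the local convergence $\mathbf{T}_n\to\mathbf{T}_\infty$ together with the polynomial asymptotics of Proposition~\ref{prop:UIPTdisth}, which control the relative probabilities of $\{|\mathbf{T}^\bullet|=n\}$ and $\{\mathrm{d_{gr}}(\rho,\delta)=h\}$ and guarantee that the two conditionings yield the same local limit for the skeleton.
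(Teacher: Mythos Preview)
Your first two steps---the local convergence $(\mathbf{T}_n,\ell_n)\to(\mathbf{T}_\infty,\ell)$ via Angel--Schramm plus Theorem~\ref{thm:CMM}, and the recovery of the truncated skeleton as a local functional of $(\mathbf{T}_n,\ell_n)$---are exactly what the paper does. The divergence, and the genuine gap, is in the last step.

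First, a minor point: your claim that $\overline{H}_r(\mathbf{T}_\infty)$ is ``bounded in the graph metric by a function of~$r$ using Theorem~\ref{thm:scale}'' is not justified. Geodesic coalescence controls geodesics \emph{to} the origin, but a vertex $u$ with $\ell(u)\geq -r$ can sit at arbitrarily large $\mathrm{d_{gr}}(\rho,u)$, and Theorem~\ref{thm:scale} does not obviously rule this out. The paper does not attempt such an argument; instead it obtains finiteness as a byproduct of the direct law computation (the limiting measure on horohulls sums to~$1$).

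The main gap is your ``transfer'' from height-conditioning to volume-conditioning. Kesten's theorem indeed identifies the local limit of the modified Galton--Watson tree of Proposition~\ref{prop:skelfinite} conditioned on $\{\text{height}\geq R\}$ as $R\to\infty$, and the resulting truncated law is the formula in the statement. But $(\mathbf{T}_n,\delta_n)$ is the Boltzmann triangulation conditioned on \emph{volume} $n$, not on height, and the two conditionings act on the decorated tree in completely different ways: volume is a random sum of slot sizes, not a tree statistic. Your appeal to Proposition~\ref{prop:UIPTdisth} does not bridge this: that proposition gives the asymptotics of $[x^n]G_h(x)$ for each \emph{fixed} $h$ as $n\to\infty$, with no uniformity in $h$, and in any case it says nothing about the joint law of $[\mathsf{Skel}]_r$ and the volume. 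What you actually need is the asymptotic, as $n\to\infty$, of the number of completions of a fixed horohull of perimeter $p$ to a pointed triangulation of size $n$---i.e.\ the coefficient $[x^n]\sum_{k\geq 1}C_{k,0,p}(x)$.

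This is precisely what the paper computes. Using \eqref{eq:gencone} one has $\sum_{k}C_{k,0,p}(x_c s)=x_c s\,(y_c t)^p$, and the factor $(y_c t)^p$ has singular expansion $y_c^p\bigl(1-p\sqrt{2/3}\,(1-s)^{1/2}+\cdots\bigr)$ at $s=1$; a transfer theorem then gives $[x^n]\sum_k C_{k,0,p}(x)\sim c\,p\,y_c^p\,x_c^{-n}n^{-3/2}$. Dividing by $[x^n]xT_1'(x)$ produces exactly the size-bias factor $p$ (divided by the mean $2$ of $\nu$) that your Kesten heuristic predicts. In other words, the honest transfer step \emph{is} the singularity analysis of the cone generating function; once you write it down, your detour through Kesten and Proposition~\ref{prop:UIPTdisth} becomes superfluous, and you have recovered the paper's proof.
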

\begin{remark}
Since the expectation of $\nu$ is $2$ and $\theta$ is critical, the law of $ \mathsf{Skel}$ as described above obviously coincides with the definition given in the introduction in terms of conditioned (multi-type) Galton--Watson tree, see \cite{ADG}.\end{remark}
\proof  $ \boldsymbol{\mathsf{T}_{n} \to \mathsf{T}_{\infty}}$. The convergence $ \mathbf{T}_{n} \to \mathbf{T}_{\infty}$ for the local topology is the well-known result of Angel \& Schramm (to be precise, see \cite{Ste14} or \cite{CLGfpp} for the case of type I triangulations). Since $ \mathbf{T}_{\infty}$ is locally finite this implies that $ \mathrm{d_{gr}}(\rho_{n}, \delta_{n}) \to \infty$ in probability. By appealing to the Skorokhod representation theorem we will assume in the rest of this proof that $ \mathbf{T}_{n} \to \mathbf{T}_{\infty}$ and $ \mathrm{d_{gr}}( \rho_{n}, \delta_{n}) \to \infty$ almost surely. We will now show that the other convergences hold almost surely.

$\boldsymbol{\ell_{n} \to \ell}$. Fix $r_{0} \geq 1$, by Theorem \ref{thm:CMM} we know that there exists $R_{0} \geq 1$ so that for any $z \notin B_{R_{0}}$ and for every $u \in B_{r_{0}}$ we have 
$$ \ell(u) = \mathrm{d_{gr}}(u,z)- \mathrm{d_{gr}}(\rho,z).$$
A moment's thought shows that as soon as $ \mathbf{T}_{n}$ and $ \mathbf{T}_{\infty}$ coincide over the balls of radius $R_{0}+1$ around their origins then the last display holds in $ \mathbf{T}_{n}$ with $\ell$ replaced by $\ell_{n}$.  Since $ \mathrm{d_{gr}}(\rho_{n}, \delta_{n}) \to \infty$ we can eventually take $z= \delta_{n}$  and deduce that $\ell$ eventually coincide with $\ell_{n}$ on the ball of radius  $r_{0}$. This proves the convergence of the second coordinate.

$\boldsymbol{ \mathsf{Skel}_{n} \to \mathsf{Skel}}$. Here again, by the construction of Section \ref{sec:skelretourne}, it is easy to see that the local convergence of the skeleton (and of the slots) is a deterministic consequence of the convergence $( \mathbf{T}_{n}, \ell_{n}) \to ( \mathbf{T}_{\infty},\ell)$ provided that we know the horohulls of radius $r$ in $ \mathbf{T}_{\infty}$ are finite almost surely. To prove this, we shall directly compute the law of the first $r$ generations of the skeleton decomposition in $ \mathbf{T}_{n}$ and observe that it converges towards the law described in the proposition.

\textbf{Computation of the law.} Fix $\Delta$ a triangulation of the cylinder of height $r \geq 0$ and $\Delta_1$ a compatible $\delta$-cap so that gluing $\Delta_1$ to the top cycle of $\Delta$ gives a horohull of perimeter $p\geq 1$ that will be denoted by $\Delta_1 \cup \Delta$. Recall that $T_1$ is the generating series of triangulations of the $1$-gon counted by inner vertices and that for a planar map $m$ the quantity $|m|$ denotes its \emph{total} number of vertices. We have: 
\begin{align*}
\mathbb P & \left( \overline{H}_{r+1} (\mathbf T_n ,\delta_n) = \Delta_1 \cup \Delta \right) \\
& \quad = 
\frac{\left[ x^{n+1-(|\Delta_1 \cup \Delta|-p)}\right] \sum_{k\geq 1} C_{k,0,p} (x) }{n \left[ x^{n} \right] T_1 (x)}\\
& \quad = \frac{
x_c^{-n+|\Delta_1 \cup \Delta|-1-p} 
\left[ s^{n+1-(|\Delta_1 \cup \Delta|-p)}\right] (x_c \cdot s) (y_c \cdot t)^p
\sum_{k\geq 1} \left( \left(\varphi_t^{\{k\}} (0) \right)^p - \left(\varphi_t^{\{k-1\}} (0) \right)^p \right) }
{n \left[ x^{n} \right] T_1 (x)}\\
& \quad =
\frac{
x_c^{-n+|\Delta_1 \cup \Delta|-p} 
\left[ s^{n-|\Delta_1 \cup \Delta|+p}\right] (y_c \cdot t)^p
}
{n \left[ x^{n} \right] T_1 (x)}
\end{align*}
Since $t = 1 - \sqrt{\frac{2}{3}} (1-s)^{1/2} + \mathcal O (1-s)$ as $s\to 1$, we have, as $n \to \infty$, by standard singularity analysis:
\begin{align}
\mathbb P \left( \overline{H}_{r+1} (\mathbf T_n ,\delta_n) = \Delta_1 \cup \Delta \right)
&\sim
\frac{
x_c^{-n + |\Delta_1 \cup \Delta| - p } 
\frac{1}{\sqrt{6 \pi}} \,p \, y_c^p \left(n-|\Delta_1 \cup \Delta|+p \right)^{-3/2}
}
{x_c^{-n} \frac{1}{6 \sqrt{2 \pi}} n^{-3/2}}\nonumber \\
&\rightarrow 2 \sqrt{3} \, p \, y_c^p \, x_c^{|\Delta_1 \cup \Delta|-p} \label{eq:interessant}.
\end{align}

From there, we just have to transform the above expression into the announced law using the $\delta$-skeleton decomposition of $\Delta_1 \cup \Delta$. Denoting by $\tau$ the tree coding this skeleton we have
\begin{align*}
\mathbb P &\left( \overline{H}_{r+1} (\mathbf T_\infty) = \Delta_1 \cup \Delta \right)\\
& \quad =
p \, \left(2\sqrt{3} \left(\frac{y_c}{x_c}\right)^{c(\rho)} K_{c(\rho)} (x_c)\right) \, \left(\prod_{v \in \tau^\star\setminus \rho} \theta(c(v))\right) \, 
\left( \frac{x_c^{|\Delta_1|}}{K_{c(\rho)} (x_c)} \right)
\,
\left(\prod_{v \in \tau^\star\setminus \rho} \frac{x_c^{ \mathrm{inn}(M_v)}}{T_{c(v)+2}(x_c)} \right)
\end{align*}
which is exactly what we want: the first parenthesis is $\nu(c(\rho))/2$ (recall equation \eqref{eq:nuval}) and the others are easy to recognize.
\endproof
\begin{remark} It is interesting to notice in \eqref{eq:interessant} that $ \mathbb{P}( \overline{H}_{r} = h_{0})$ where $h_{0}$ is a given horohull of radius $r$ takes the form $x_{c}^{|h_{0}|} f(|\partial h_{0}|)$ for some function $f$. This is reminiscent of the spatial Markov property of hull of balls in the UIPT, see the definition of Markovian triangulations of the plane in \cite[Eq (1)]{CurPSHIT}.\end{remark}

\subsection{Scaling limits for horohulls in the UIPT}
\label{sec:scaling}

This section is devoted to the proof of Proposition \ref{thm:scale} which is mostly a computation.
From the previous section, if $F \in \mathcal{F}(r,q,p)$ is a forest of height $r$ with given perimeters, we have for any $s_1,s_2 \in [0,1]$ and denoting by $t_1$ and $t_2$ their respective conjugates defined by relation \eqref{eq:paramst}:
\[
\mathbb E \Big[ 
s_1^{|\overline{H}_{r+1} (\mathbf \mathbf{T}_{\infty})|} s_2^{|\partial \overline{H}_{r+1} (\mathbf \mathbf{T}_{\infty})|}
\Big| \mathrm{Skel} \left( \overline{H}_{r+1} (\mathbf \mathbf{T}_{\infty}) \setminus \overline{H}_{1} (\mathbf \mathbf{T}_{\infty})  \right) = F
\Big]
= s_2^q \frac{K_p(x_c s_1)}{K_p (x_c)} s_1^{q-p} \prod_{v \in F^\star} \frac{s_1 T_{c(v) + 2}(x_c s_1)}{T_{c(v) + 2}(x_c)}.
\]
Since $\prod_{v \in F^\star} t_1^{c(v)} = t_1^{q-p}$, we have
\[
\mathbb E \Big[ 
s_1^{|\overline{H}_{r+1} (\mathbf \mathbf{T}_{\infty})|} s_2^{|\partial \overline{H}_{r+1} (\mathbf \mathbf{T}_{\infty})|}
\Big| \mathrm{Skel} \left( \overline{H}_{r+1} (\mathbf \mathbf{T}_{\infty}) \setminus \overline{H}_{1} (\mathbf \mathbf{T}_{\infty})  \right) = F
\Big]
= s_2^q \frac{K_p(x_c s_1)}{K_p (x_c)} \left( \frac{s_1}{t_1}\right)^{q-p}\prod_{v \in F^\star} \frac{[z^{c(v)}] \varphi_{t_1}}{[z^{c(v)}] \varphi}.
\]
Summing over every forest of $\mathcal{F}(r,q,p)$ gives
\begin{align*}
\mathbb E \Big[ 
s_1^{|\overline{H}_{r+1} (\mathbf \mathbf{T}_{\infty})|} s_2^{|\partial \overline{H}_{r+1} (\mathbf \mathbf{T}_{\infty})|}
& \, \mathbf{1}_ 
{ \{ |\partial \overline{H}_{r+1} (\mathbf \mathbf{T}_{\infty})| = q , |\partial \overline{H}_{1} (\mathbf \mathbf{T}_{\infty})| = p \} }
\Big]\\
& = 2 \sqrt{3} \left( \frac{y_c t_1}{x_c s_1}\right)^{p}
K_p(x_c s_1) \, 
q \left( \frac{s_1 s_2}{t_1}\right)^{q}[z^q] \left( \varphi_{t_1}^{\{r\}} \right)^p.
\end{align*}
Summing over $p$ and recalling the expression \eqref{eq:Ksz} of $K$ gives:
\begin{align*}
\mathbb E \Big[ 
s_1^{|\overline{H}_{r+1} (\mathbf \mathbf{T}_{\infty})|} s_2^{|\partial \overline{H}_{r+1} (\mathbf \mathbf{T}_{\infty})|}
& \, \mathbf{1}_ 
{ \{ |\partial \overline{H}_{r+1} (\mathbf \mathbf{T}_{\infty})| = q \} }
\Big]\\
& = 2 \sqrt{3} \,
q \left( \frac{s_1 s_2}{t_1}\right)^{q}[z^q] K \big( s_1,\varphi_{t_1}^{\{r\}}  \big)\\
& = 2 \sqrt{3} \,
q \left( \frac{s_1 s_2}{t_1}\right)^{q}[z^q]
\Bigg(
\frac{(y_c {t_1})^2}{x_c {s_1}} \frac{\varphi_{t_1} (0)}{1 - \frac{\varphi_{t_1} (0)}{\varphi_{t_1}^{\{r\}}(z)}} \left(\varphi_{t_1}^{\{r+1\}} (z)- \varphi^{\{2\}}_{t_1}(0) \right)
\Bigg).
\end{align*}
Finally, summing over $q$ gives the joint generating function:
\begin{align*}
\mathbb E \Big[ 
s_1^{|\overline{H}_{r+1} (\mathbf \mathbf{T}_{\infty})|} s_2^{|\partial \overline{H}_{r+1} (\mathbf \mathbf{T}_{\infty})|} \Big]
& = \frac{t_1 s_2}{2} \varphi_{t_1} (0)
\frac{\mathrm d}{\mathrm{d}z} \Bigg(
\frac{\varphi_{t_1}^{\{r+1\}} (z)- \varphi^{\{2\}}_{t_1}(0) }
{1 - \frac{\varphi_{t_1} (0)}{\varphi_{t_1}^{\{r\}}(z)}}
\Bigg)_{z=\frac{s_1 s_2}{t_1}}.
\end{align*}
Note that this expression makes perfect sense for all choices of $s_1,s_2\in [0,1]$ since the radius of convergence of $\varphi_{t_1}^{\{r\}}$ is $s_1/t_1$.

To get a scaling limit, we set
\[
s_1 = \exp \left( - \frac{\lambda_1}{r^4} \right) \quad \text{and} \quad s_2 = \exp \left( - \frac{\lambda_1}{r^2} \right)
\]
and let $r$ go to infinity. For these particular choices, it is not hard to see that
\[
\varphi_{t_1}^{\{r\}} \left( \frac{s_1 s_2}{t_1} \right) \underset{r \to \infty}{\longrightarrow} 1
\]
and therefore, we have
\begin{align*}
\mathbb E \Big[ 
s_1^{|\overline{H}_{r+1} (\mathbf \mathbf{T}_{\infty})|} s_2^{|\partial \overline{H}_{r+1} (\mathbf \mathbf{T}_{\infty})|} \Big]
& \underset{r \to \infty}{\sim}
\frac{1}{2} \varphi (0)
\left(\frac{1}{1-\varphi(0)} - \varphi(0) \frac{1-\varphi^{\{2\}}(0)}{(1-\varphi(0))^2} \right)
\frac{\mathrm d\varphi_{t_1}^{\{r\}}}{\mathrm{d}z} \Bigg(
\frac{s_1 s_2}{t_1}
\Bigg)\\
& = 
\frac{\mathrm d\varphi_{t_1}^{\{r\}}}{\mathrm{d}z} \Bigg(
\frac{s_1 s_2}{t_1} \Bigg)
\end{align*}
with $\varphi(0) = 3/4$ and $\varphi^{\{2\}}(0) = 8/9$. With a bit more work (or our Maple worksheet!) we get the limit of this derivative and obtain the scaling limit:
\begin{align*}
\lim_{r \to \infty} 
\mathbb E &\Big[ 
\exp \left(- \lambda_1 r^{-4}|\overline{H}_{r} (\mathbf \mathbf{T}_{\infty})| - \lambda_2 r^{-2} |\partial \overline{H}_{r} (\mathbf \mathbf{T}_{\infty})| \right)\Big]\\
& \quad =
\frac
{
\left(\frac{2}{3} + \frac{\lambda_2}{(6 \lambda_1)^{1/2}} \right)^{-1/2}
\sinh \left( (6 \lambda_1)^{1/4}\right) + \cosh \left( (6 \lambda_1)^{1/4}\right)
}
{\Bigg( \left(\frac{2}{3} + \frac{\lambda_2}{(6 \lambda_1)^{1/2}} \right)^{1/2}
\sinh \left( (6 \lambda_1)^{1/4}\right) + \cosh \left( (6 \lambda_1)^{1/4}\right)\Bigg)^3}.
\end{align*}
This expression is still valid for $\lambda_1 \to 0$ giving
\begin{align*}
\lim_{r \to \infty} 
\mathbb E &\Big[ 
\exp \left(- \lambda_2 r^{-2} |\partial \overline{H}_{r} (\mathbf \mathbf{T}_{\infty})| \right)\Big]
=
\frac{1}{\left( 1 + \sqrt{\lambda_2}\right)^3}.
\end{align*}

\begin{remark} \label{rem:fullscaling} In fact, based on the results \cites{CLG16,Kr,M} it is likely that we can get a full scaling limit for the rescaled process of the volume and perimeter of the horohulls. More precisely we should have
$$ \left( \frac{| \overline{H}_{[rt]}|}{ r^{4}}, \frac{|\partial \overline{H}_{[rt]}|}{ r^{2}} \right) \xrightarrow[r\to\infty]{(d)} ( \mathcal{X}_{t}, \mathcal{V}_{t})_{t \geq 0}$$ where $ \mathcal{X}$ is a branching process with branching mechanism $\psi(u) = 2 u^{3/2}$  starting from $0$ conditioned to survive and conditionally on $ \mathcal{X}$ we have,
$$ \mathcal{V}_{t} = \sum_{s_{i} \leq t}  \frac{4}{3}\left(\Delta \mathcal{X}_{s_{i}}\right)^2 \xi_{i},$$ where $s_{i}$ is a enumeration of the the jump times of $ \mathcal{X}$ and $\xi_{i}$ are i.i.d.~random variables of law $ \frac{ \mathrm{d}x}{ \sqrt{2\pi x^{5}}}e^{-1/(2x)} \mathbf{1}_{x>0}$.
\end{remark}

\begin{bibdiv}
\begin{biblist}[\normalsize]

\bib{AW}{article}{
   author={Ambj\o rn, J.},
   author={Watabiki, Y.},
   title={Scaling in quantum gravity},
   journal={Nuclear Phys. B},
   volume={445},
   date={1995},
   number={1},
   pages={129--142},
   issn={0550-3213},
   review={\MR{1338099}},
}

\bib{ADJ}{book}{
   author={Ambj\o rn, Jan},
   author={Durhuus, Bergfinnur},
   author={Jonsson, Thordur},
   title={Quantum geometry},
   series={Cambridge Monographs on Mathematical Physics},
   note={A statistical field theory approach},
   publisher={Cambridge University Press, Cambridge},
   date={1997},
   pages={xiv+363},
   isbn={0-521-46167-7},
   review={\MR{1465433}},
}

\bib{AR}{article}{
   author={Angel, Omer},
   author={Ray, Gourab},
   title={The half plane UIPT is recurrent},
   status={preprint},
   eprint={arXiv:1601.00410},
   date={2016},
}

\bib{AS}{article}{
   author={Angel, Omer},
   author={Schramm, Oded},
   title={Uniform infinite planar triangulations},
   journal={Comm. Math. Phys.},
   volume={241},
   date={2003},
   number={2-3},
   pages={191--213},
   issn={0010-3616},
   review={\MR{2013797}},
}

\bib{ADG}{article}{
   author={Abraham, Romain},
   author={Delmas, Jean-Fran\c{c}ois},
   author={Guo, Hongsong},
   title={Critical Multi-Type Galton-Watson Trees Conditioned to be Large},
   status={preprint},
   eprint={arXiv:1511.01721},
   date={2015},
}

\bib{Bet16}{article}{
   author={Bettinelli, J\'er\'emie},
   title={Geodesics in Brownian surfaces (Brownian maps)},
   language={English, with English and French summaries},
   journal={Ann. Inst. Henri Poincar\'e Probab. Stat.},
   volume={52},
   date={2016},
   number={2},
   pages={612--646},
   issn={0246-0203},
   review={\MR{3498003}},
}

\bib{BDFG03}{article}{
   author={Bouttier, J.},
   author={Di Francesco, P.},
   author={Guitter, E.},
   title={Geodesic distance in planar graphs},
   journal={Nuclear Phys. B},
   volume={663},
   date={2003},
   number={3},
   pages={535--567},
   issn={0550-3213},
   review={\MR{1987861}},
}

\bib{BG12}{article}{
   author={Bouttier, J.},
   author={Guitter, E.},
   title={Planar maps and continued fractions},
   journal={Comm. Math. Phys.},
   volume={309},
   date={2012},
   number={3},
   pages={623--662},
   issn={0010-3616},
   review={\MR{2885603}},
}

\bib{Budzinski18}{article}{
   author={Budzinski, T.},
   title={Infinite geodesics in hyperbolic random triangulations},
   status={in preparation},
   date={2018},
}

\bib{CD06}{article}{
   author={Chassaing, Philippe},
   author={Durhuus, Bergfinnur},
   title={Local limit of labeled trees and expected volume growth in a
   random quadrangulation},
   journal={Ann. Probab.},
   volume={34},
   date={2006},
   number={3},
   pages={879--917},
   issn={0091-1798},
   review={\MR{2243873}},
}

\bib{CurPSHIT}{article}{
	Author = {Curien, Nicolas},
	Journal = {Probab. Theory Related Fields (to appear)},
	Title = {Planar stochastic hyperbolic triangulations}
	Journal = {Probab. Theory Related Fields},
	Number = {3-4},
	Pages = {509--540},
	Title = {Planar stochastic hyperbolic triangulations},
	Volume = {165},
	Year = {2016}}

\bib{CLG14}{article}{
   author={Curien, Nicolas},
   author={Le Gall, Jean-Fran\c{c}ois},
   title={The Brownian plane},
   journal={J. Theoret. Probab.},
   volume={27},
   date={2014},
   number={4},
   pages={1249--1291},
   issn={0894-9840},
   review={\MR{3278940}},
}

\bib{CLG16}{article}{
   author={Curien, Nicolas},
   author={Le Gall, Jean-Fran\c{c}ois},
   title={The hull process of the Brownian plane},
   journal={Probab. Theory Related Fields},
   volume={166},
   date={2016},
   number={1-2},
   pages={187--231},
   issn={0178-8051},
   review={\MR{3547738}},
}

\bib{CLGfpp}{article}{
   author={Curien, Nicolas},
   author={Le Gall, Jean-Fran\c{c}ois},
   title={First passage percolation and local modifications of distances in random triangulations},
   journal={to appear in Ann. Sci. ENS},
   volume={},
   date={2018},
   number={},
   pages={},
   issn={},
   review={},
}

\bib{CMM10}{article}{
   author={Curien, N.},
   author={M\'enard, L.},
   author={Miermont, G.},
   title={A view from infinity of the uniform infinite planar
   quadrangulation},
   journal={ALEA Lat. Am. J. Probab. Math. Stat.},
   volume={10},
   date={2013},
   number={1},
   pages={45--88},
   issn={1980-0436},
   review={\MR{3083919}},
}

\bib{DF05}{article}{
   author={Di Francesco, P.},
   title={Geodesic distance in planar graphs: an integrable approach},
   journal={Ramanujan J.},
   volume={10},
   date={2005},
   number={2},
   pages={153--186},
   issn={1382-4090},
   review={\MR{2194521}},
}

\bib{DLG02}{article}{
   author={Duquesne, Thomas},
   author={Le Gall, Jean-Fran\c{c}ois},
   title={Random trees, L\'evy processes and spatial branching processes},
   journal={Ast\'erisque},
   number={281},
   date={2002},
   pages={vi+147},
   issn={0303-1179},
   review={\MR{1954248}},
}

\bib{FS}{book}{
   author={Flajolet, Philippe},
   author={Sedgewick, Robert},
   title={Analytic combinatorics},
   publisher={Cambridge University Press, Cambridge},
   date={2009},
   pages={xiv+810},
   isbn={978-0-521-89806-5},
   review={\MR{2483235}},
}

\bib{GouldenJackson}{book}{
   author={Goulden, Ian P.},
   author={Jackson, David M.},
   title={Combinatorial enumeration},
   note={With a foreword by Gian-Carlo Rota;
   Reprint of the 1983 original},
   publisher={Dover Publications, Inc., Mineola, NY},
   date={2004},
   pages={xxvi+569},
   isbn={0-486-43597-0},
   review={\MR{2079788}},
}

\bib{G17}{article}{
   author={Guitter, Emmanuel},
   title={The distance-dependent two-point function of triangulations: a new
   derivation from old results},
   journal={Ann. Inst. Henri Poincar\'e D},
   volume={4},
   date={2017},
   number={2},
   pages={177--211},
   issn={2308-5827},
   review={\MR{3656903}},
}

\bib{Jeu82}{article}{
   author={Jeulin, T.},
   title={Sur la convergence absolue de certaines int\'egrales},
   language={French},
   conference={
      title={Seminar on Probability, XVI},
   },
   book={
      series={Lecture Notes in Math.},
      volume={920},
      publisher={Springer, Berlin-New York},
   },
   date={1982},
   pages={248--256},
   review={\MR{658688}},
}

\bib{Kr}{article}{
   author={Krikun, M. A.},
   title={A uniformly distributed infinite planar triangulation and a
   related branching process},
   language={Russian, with English and Russian summaries},
   journal={Zap. Nauchn. Sem. S.-Peterburg. Otdel. Mat. Inst. Steklov.
   (POMI)},
   volume={307},
   date={2004},
   number={Teor. Predst. Din. Sist. Komb. i Algoritm. Metody. 10},
   pages={141--174, 282--283},
   issn={0373-2703},
   translation={
      journal={J. Math. Sci. (N.Y.)},
      volume={131},
      date={2005},
      number={2},
      pages={5520--5537},
      issn={1072-3374},
   },
   review={\MR{2050691}},
}

\bib{Kr05}{article}{
   author={Krikun, M. A.},
   title={Local structure of random quadrangulations},
   status={preprint},
   eprint={arXiv:math/0512304v2},
   date={2005},
}

\bib{LG09}{article}{
   author={Le Gall, Jean-Fran\c{c}ois},
   title={Geodesics in large planar maps and in the Brownian map},
   journal={Acta Math.},
   volume={205},
   date={2010},
   number={2},
   pages={287--360},
   issn={0001-5962},
   review={\MR{2746349}},
}

\bib{LGL17}{article}{
   author={Le Gall, Jean-Fran\c{c}ois},
   author={Leh\'ericy, Thomas},
   title={Separating cycles and isoperimetric inequalities in the uniform infinite planar quadrangulation},
   status={preprint},
   eprint={arXiv:1710.02990},
   date={2017},
}

\bib{M}{article}{
   author={M\'enard, Laurent},
   title={Volumes in the Uniform Infinite Planar Triangulation:from skeletons to generating functions},
   journal={to appear in Combin. Probab. and Comp.},
   volume={},
   date={2018},
   number={},
   pages={},
   issn={},
   review={},
}

\bib{Ste14}{article}{
   author={Stephenson, Robin},
   title={Local convergence of large critical multi-type Galton-Watson trees and applications to random maps},
   status={preprint},
   eprint={arXiv:1412.6911},
   date={2014},
   number={},
   pages={},
   issn={},
   review={},
}

\end{biblist}
\end{bibdiv}

\bigskip

\noindent \textsc{Nicolas Curien,\\ Laboratoire de Math\'ematiques d'Orsay, Univ. Paris-Sud, CNRS, Universit\'e Paris-Saclay, 91405 Orsay, France}

\bigskip

\noindent \textsc{Laurent M\'enard,\\ Laboratoire Modal'X, UPL, Univ. Paris Nanterre, F92000 Nanterre, France}

\end{document}